\newcommand{\CC}{\mathbb C}
\newcommand{\EE}{\mathbb E}
\newcommand{\FF}{\mathbb F}
\newcommand{\GG}{\mathbb G}
\newcommand{\PP}{\mathbb P}
\newcommand{\RR}{\mathbb R}
\newcommand{\cB}{\mathcal B}
\newcommand{\cF}{\mathcal F}
\newcommand{\cG}{\mathcal G}
\newcommand{\cL}{\mathcal L}
\newcommand{\bfS}{\mathbf S}
\newcommand{\bfT}{\mathbf T}
\newcommand{\bfU}{\mathbf U}
\newcommand{\diver}{\mathrm{div}}
\newcommand{\grad}{\nabla}
\newcommand{\ds}{\displaystyle}
\newcommand{\indic}{{\mathds 1}}
 \newtheorem{theorem}{\textbf{Theorem}}[section]
 \newtheorem{remark}[theorem]{\textbf{Remark}}
 \newtheorem{lemma}[theorem]{\textbf{Lemma}}
 \newtheorem{proposition}[theorem]{\textbf{Proposition}}
\newcommand{\pushright}[1]{\ifmeasuring@#1\else\omit\hfill$\displaystyle#1$\fi\ignorespaces}
\newcommand{\pushleft}[1]{\ifmeasuring@#1\else\omit$\displaystyle#1$\hfill\fi\ignorespaces}
\title{Optimal control of conditioned processes\\ with feedback controls}
\author{Yves Achdou\thanks { Univ. Paris Diderot, Sorbonne Paris Cit{\'e}, Laboratoire Jacques-Louis Lions, UMR 7598, UPMC, CNRS, F-75205 Paris, France.
 achdou@ljll.univ-paris-diderot.fr}, Mathieu Lauri{\`e}re\thanks {Department of Operations Research and Financial Engineering, Princeton University, Sherrerd Hall, Charlton Street, Princeton, NJ 08544. lauriere@princeton.edu}, Pierre-Louis Lions\thanks {Coll{\`e}ge de France, Paris, France}}
\begin{document}

\maketitle

\begin{abstract}
  We consider a class of closed loop  stochastic optimal control problems  in finite time horizon, in which the cost 
is an expectation conditional on the event that the process has not 
exited a given bounded domain. An important difficulty is that the probability of the  event that conditionates the strategy
 decays as time grows. The optimality conditions consist of  a system of partial differential equations, including  a Hamilton-Jacobi-Bellman equation (backward w.r.t. time) and a (forward w.r.t. time) Fokker-Planck equation  for the law of the conditioned process.  The two equations are supplemented with Dirichlet conditions.
Next, we discuss  the asymptotic behavior as the time horizon tends to $+\infty$. This leads to a new kind of optimal control problem driven by an eigenvalue problem related to a continuity equation with Dirichlet conditions on the boundary. We prove existence for the latter.
We also propose numerical methods and supplement the various theoretical aspects with numerical simulations.
\end{abstract}

\section{Introduction}


In optimal control, the goal is generally to control the dynamics of a system so as to minimize a certain cost or, equivalently, to maximize a certain profit.
If the evolution of the system is stochastic, the cost is computed in expectation over the possible realizations of the randomness.
In this work, we consider a stochastic optimal control problem in which the optimization is performed conditionally on the occurrence or the non occurrence  of a certain event.

Hence, what follows can be seen as an attempt to model limited rationality: the agent determines her strategy but disregards the possibility of  some  event.
Such a situation unfortunately occurs in politics,  for example when the effects of some decisions on the global climate change are not taken into account.
There may even be situations when the probability of the  event that conditionates the strategy becomes smaller and smaller,
i.e. in the example above, the 
 leader keeps thinking that the situation is safe whereas the probability of disastrous events becomes larger and larger.


In what follows, we focus on the case when the event of interest is that the process stays inside a given bounded domain, but of course one may think about many other situations.


A quite important part of the material contained in the present work directly comes 
from the lectures given by the third author in  Coll{\`e}ge de France in November and December 2016, see \cite{PLL}. 
However, we will also discuss theoretical aspects including some technical details that were not dealt with in  \cite{PLL} for lack of time, 
propose numerical methods and illustrate the main ideas by numerical simulations. In particular, the latter  will shed some light on  some open problems related to the long time behavior.

	In the sequel, we consider a bounded domain $\Omega \subset \RR^d$ with a smooth boundary denoted by $\partial \Omega$. The closure of $\Omega$ in $\RR^d$ is denoted by $\overline \Omega$. For a time horizon $T>0$, $Q_T$ stands for the time-space cylinder $(0,T) \times \Omega$. Let $\indic_\Omega$ be the indicator function of $\Omega$, taking values in $\{0,1\}$ such that $\indic_\Omega(x) = 1$ if and only if $x \in \Omega$.
	Let $W = (W_t)_{t \geq 0}$ be a standard $d$-dimensional Brownian motion. We consider feedback controls (also called  closed loop or Markovian controls) in $L^\infty((0,+\infty) \times \RR^{d}; \RR^d)$, which are deterministic functions of $t$ and $x$. For such a control $b \in L^\infty((0,+\infty) \times \RR^{d}; \RR^d)$, let $X^b = (X^b_t)_{t\geq 0}$ be a solution to the SDE
	\begin{equation}
	\label{eq:sde-X}
		d X^b_t = -b(t,X^b_t) dt + \sigma d W_t.
	\end{equation}
	We assume that the distribution of $X^b_0$ is absolutely continuous with respect to Lebesgue measure with density $p_0$, and that $p_0$ is a smooth nonnegative function supported in $\Omega$. 
	The first time $X^b$ exits $\Omega$ is denoted by $\tau^b$:  $\tau^b = \inf\{t > 0 \,:\, X^b_t \notin \Omega\}$.
	
To motivate the problem that will be studied below, let us first focus on a simple case. 
	Let  $g: \overline \Omega \to \RR^d$ be a continuous function and $L: \overline \Omega \times \RR^d \to \RR$ be a  strictly convex function, continuously differentiable in its second argument.
	The problem is to minimize over feedback controls $b \in L^\infty((0,+\infty) \times \RR^{d}; \RR^d)$ the total cost
\begin{equation}\label{eq:intro-cost}
	\int_0^T \EE\left[  L(X^b_t, b(t,X^b_t)) \, | \, \tau^b >t \right] dt
	+
	\EE\left[ g(X^b_T)\, | \, \tau^b >T \right]
\end{equation}
where $X^b$ has been defined above.	
	
We can rewrite the above problem as an optimal control problem driven by a Kolmogorov-Fokker-Planck (KFP) equation. The infinitesimal generator associated to $X^b$ is 
 \begin{equation}
 \label{eq:def_Lb}
 	\cL_b \varphi = -\frac{\sigma^2}{2}\Delta \varphi + b \cdot D \varphi.
\end{equation}
 The distribution of $X^b_t$ has a density $m_b(t)$, and $m_b$ solves the KFP equation:
$$
	\partial_t m_b + \cL^*_b m_b = 0, \qquad \hbox{ in } (0,T) \times \RR^d, 
$$
with initial condition $m_b(0) = p_0$ on $\Omega$, where
\begin{equation}
 \label{eq:def_Lb_star}
	\cL_b^* \varphi = -\frac{\sigma^2}{2}\Delta \varphi - \diver( b \varphi)
\end{equation}
is the dual operator associated to $\cL_b$.
 
 Let us now introduce $p_b$ satisfying the Dirichlet problem
 \begin{equation}
\label{eq:FP-Dirichlet}
\left\{\quad
\begin{aligned}
	\partial_t p_b + \cL_b^* p_b &= 0, \qquad \hbox{ in } Q_T,
	\\
	p_b &= 0, \qquad \hbox{ on } (0,T) \times \partial \Omega,
	\\
	p_b(0,\cdot) &= p_0, \qquad \hbox{ on } \Omega.
\end{aligned}
\right.
\end{equation}
It can be checked that  for any smooth $f: \RR^d \to \RR$,
$$
	\EE \left[ f(X^b_t) \indic_{\{\tau^b > t\}} \right] = \int_\Omega f(x) p_b(t,x) dx.
$$ 
In particular, $\int_\Omega p_b(t,x) dx = \PP(\tau^b > t)$ represents the probability of ``survival'' until time $t$. To alleviate the notations, this quantity will sometimes be denoted simply by $\int_\Omega p_b(t)$.
Hence the running cost in~\eqref{eq:intro-cost} can be written as
$$
	\EE\left[  L(X^b_t, b(t,X^b_t)) \, | \, \tau^b >t \right]
	=
	\frac{\EE\left[  L(X^b_t, b(t,X^b_t)) \indic_{\{\tau^b > t\}} \right]}{ \PP(\tau^b > t) }
	=
	\frac{\int_\Omega L(x, b(t,x)) p_b(t,x) dx}{\int_\Omega p_b(t)}.
$$
Similarly, the final cost can be written as
$$
	\EE\left[ g(X^b_T)\, | \, \tau^b >T \right]
	=
	\frac{\int_\Omega g(x) p_b(T,x) dx}{ \int_\Omega p_b(T)}.
$$
Overall, the problem can be recast as the following deterministic control problem driven by a KFP equation: minimize 
$$
	\int_0^T \frac{\int_\Omega L(x, b(t,x)) p_b(t,x) dx}{\int_\Omega p_b(t)} dt + \frac{\int_\Omega g(x) p_b(T,x) dx}{\int_\Omega p_b(T)}
$$
subject to~\eqref{eq:FP-Dirichlet}.

 We stress that to obtain the latter formulation, it is important to consider only controls that are in feedback form.
In the sequel  we will consider a slightly more general class of problems, in which the costs are given by functionals acting on $\frac{p_b(t,\cdot)}{\int_\Omega p(t)}$. 

\begin{remark}
\label{rem:b-res-QT}
	From~\eqref{eq:FP-Dirichlet} it is clear that only the restriction of $b$ to $Q_T$ matters in the latter optimal control problem.
\end{remark}
We will see below that ${\int_\Omega p_b(t)}$ decays as $t$ grows. This represents an important difficulty at least for two aspects:
\begin{itemize}
\item in numerical simulations, the division by very small quantities is a problem
\item the asymptotic behavior of the problem as the horizon tends to $+\infty$.
\end{itemize}

The paper is organized as follows. In Section  \ref{sec:finite-horizon-case}, we  present the theory on finite horizon conditional control problems (as it was discussed in \cite{PLL}), focusing on the case when the control is bounded in $L^\infty$ by a fixed constant $M$: the existence of a minimizer as well as the optimality conditions are discussed; the  latter have the form of a forward-backward system coupling a Dirichlet problem involving a forward Fokker-Planck equation and a Dirichlet problem involving a Bellman equation, as in the theory of mean field games, see \cite{MR2271747,MR2269875,MR2295621}, and in  the theory of mean field type  control see \cite{MR3091726,MR3752669,MR3134900}.
We prove existence of solutions for this system. Next, we describe a possible asymptotic behavior when the time horizon becomes large: to the best of our knowledge, proving these asymptotics is a difficult open problem. Plugging the ansatz into the previously mentioned system of PDEs leads to an interesting new system, coupling a principal eigenvalue problem involving a non symmetric second order elliptic operator and  a stationnary  Belmann equation. In order to study the latter problem, we give useful facts on principal eigenvalue problems for non symmetric operators in Section \ref{sec:facts-about-princ}. In Section~\ref{sec:long-time-behavior}, we prove existence for the system that has been introduced at the end of Section   \ref{sec:finite-horizon-case}, by showing first  that this system can be seen as the first order optimality conditions of an optimal control problem driven by a principal eigenvalue problem (this material was also discussed in \cite{PLL}). 
Section~\ref{sec:M-to-infty} is independent from Sections \ref{sec:facts-about-princ} and \ref{sec:long-time-behavior}: it deals with the passage to the limit  $M\to+\infty$ ($M$ is the parameter introduced in Section \ref{sec:finite-horizon-case}), and relies very much on stability results for weak solutions of Fokker-Planck and Hamilton-Jacobi equations.
For the finite horizon problem, a  finite difference  method, reminiscent of that introduced in \cite{ MR3135339,MR2679575} is proposed in Section~\ref{sec:numerics-time} and numerical simulations are performed for
one and two dimensional examples. In particular, the results are in agreement with  the ansatz made at the end of  Section  \ref{sec:finite-horizon-case}. 
Finally, in Section \ref{sec:numer-meth-stat},  a numerical method is proposed for the stationary problem discussed in Section \ref{sec:long-time-behavior}, and the results 
include the asymptotics  when the horizon tends to infinity.

\section{The finite horizon case}
\label{sec:finite-horizon-case}

\subsection{Existence of a minimizer and necessary optimality conditions}

As above, let $L: \overline \Omega \times \RR^d \to \RR$ be a continuous function,  strictly convex and continuously differentiable in its second argument.
Let $\Phi,\Psi: L^2(\Omega) \to \RR$ be Fr{\'e}chet differentiable functionals which are bounded from below. The gradients of $\Phi$ and $\Psi$ at $p \in L^2(\Omega)$ are respectively denoted by $F[p]$ and $G[p]$, that is, $F[p],G[p] \in L^2(\Omega)$ and $(F[p], q)_{L^2(\Omega)} = D \Phi[p](q)$, $(G[p], q)_{L^2(\Omega)} = D \Psi[p](q)$ for all $q \in L^2(\Omega)$. Let $\epsilon \ge 0$ be a fixed constant (we stress that unless otherwise specified, $\epsilon$ can be $0$). 
In view of Remark~\ref{rem:b-res-QT}, 
we introduce the following notations for the set of controls:
$
	\cB = L^\infty(Q_T; \RR^d),
$
and for every $M>0$, $\cB_{M} = \{b \in L^\infty(Q_T; \RR^d) \,:\, \|b\|_{L^\infty} \leq M   \}$ is the subset of controls bounded by $M$.
Then, for $b \in \cB$, we consider the cost functional
\begin{align}
\label{eq:def-time-J}
	J(b) 
	&= \int_0^T\left(\frac{\int_\Omega p_b(t,x) L(x,b(t,x)) dx}{\int_\Omega p_b(t)} + \Phi\left[ \frac{p_b(t,x)}{\int_\Omega p_b(t)} \right]\right) dt 
	+ \Psi\left[\frac{p_b(T,\cdot)}{\int_\Omega p_b(T)}\right] - \epsilon \ln\left( \int_\Omega p_b(T) \right) ,
\end{align}
where $p_b$ solves~\eqref{eq:FP-Dirichlet}. We are going to address the following optimal control problem:
\begin{equation}
\label{eq:def-pb-J-BM}
	\text{ minimize  $J$ on $\cB_M$. } 
\end{equation}

\begin{remark}
	The last term in~\eqref{eq:def-time-J} has a different nature from the other ones because it does not depend on the conditional probability $\frac{p_b(T,\cdot)}{\int_\Omega p_b(T)}$. If $\epsilon>0$, it penalizes the decay of the probability of survival. This term will be helpful in Section~\ref{sec:M-to-infty}.
\end{remark}

We start with the following result.
\begin{theorem}
\label{thm:existence-minimizer-J}
	For each $b \in \cB$, the unique weak solution $p_b$ to~\eqref{eq:FP-Dirichlet} is continuous in $\overline Q_T$ and positive in $Q_T$. The cost $J(b)$ given by~\eqref{eq:def-time-J} is well defined for each $b \in \cB$. Moreover, for every $M>0$, there exists $b_{\rm{opt}} \in \cB_M$ which minimizes $J$ over $\cB_M$.
\end{theorem}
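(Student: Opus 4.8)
The plan is to handle the three assertions of the theorem in order. For the first, I would observe that \eqref{eq:FP-Dirichlet} is the linear divergence‑form parabolic equation $\partial_t p_b-\diver\!\big(\tfrac{\sigma^2}{2}\nabla p_b+b\,p_b\big)=0$ with bounded measurable drift and smooth datum $p_0$ supported in $\Omega$. Existence and uniqueness of a weak solution $p_b\in C([0,T];L^2(\Omega))\cap L^2(0,T;H^1_0(\Omega))$ with $\partial_t p_b\in L^2(0,T;H^{-1}(\Omega))$ is classical (energy estimate plus Gronwall for uniqueness; Galerkin approximation for existence). Continuity on $\overline Q_T$, in fact $p_b\in C^\alpha(\overline Q_T)$ for some $\alpha\in(0,1)$, follows from De Giorgi--Nash--Moser theory for divergence‑form parabolic equations with $L^\infty$ lower‑order coefficients, using the smoothness of $\partial\Omega$ for the boundary barrier and the fact that $p_0$ vanishes near $\partial\Omega$ for compatibility at the initial corner. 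Nonnegativity comes from the weak maximum principle (comparison with $0$, since $p_0\ge 0$); strict positivity in $Q_T$, and also $p_b(T,\cdot)>0$ in $\Omega$ (which I will need below), from the parabolic strong maximum principle / Harnack inequality, equivalently from positivity of the killed fundamental solution together with $p_0\not\equiv 0$.

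For the second assertion, the key point is that $t\mapsto\int_\Omega p_b(t)$ is continuous on $[0,T]$ and strictly positive (value $1$ at $t=0$ and $\PP(\tau^b>t)>0$ for $t\in(0,T]$ by the previous step), hence bounded below by some $c_b>0$; it is also $\le 1$. Then the running‑cost integrand $\frac{\int_\Omega p_b L(\cdot,b)}{\int_\Omega p_b}$ is a weighted average of $L(\cdot,b(t,\cdot))$ over $\overline\Omega$, hence bounded in absolute value by $\max_{\overline\Omega\times\overline B_{\|b\|_\infty}}|L|$ and measurable in $t$, so integrable on $(0,T)$; the normalized density $p_b(t,\cdot)/\int_\Omega p_b(t)$ lies in a fixed ball of $L^2(\Omega)$ and depends continuously on $t$, so $t\mapsto\Phi[\,\cdot\,]$ is continuous (hence integrable) and the terms $\Psi[\,\cdot\,]$ and $-\epsilon\ln\int_\Omega p_b(T)$ are finite. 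Thus $J(b)\in\RR$ for every $b\in\cB$.

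For the third assertion I would apply the direct method on $\cB_M$. First, $J$ is bounded below on $\cB_M$ uniformly: the running $L$‑term is $\ge T\min_{\overline\Omega\times\overline B_M}L$ by the weighted‑average bound (uniform in $b$ and $t$), $\Phi$ and $\Psi$ are bounded below by hypothesis, and $-\epsilon\ln\int_\Omega p_b(T)\ge 0$ since $\int_\Omega p_b(T)\le 1$. Take a minimizing sequence $(b_n)\subset\cB_M$; by the Banach--Alaoglu theorem a subsequence satisfies $b_n\rightharpoonup b^*$ weakly‑$*$ in $L^\infty(Q_T;\RR^d)$, with $b^*\in\cB_M$. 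The crux is the stability of the state map. Uniform energy estimates (testing \eqref{eq:FP-Dirichlet} against $p_{b_n}$ and using $\|b_n\|_\infty\le M$) bound $(p_{b_n})$ in $L^\infty(0,T;L^2)\cap L^2(0,T;H^1_0)$ and $(\partial_t p_{b_n})$ in $L^2(0,T;H^{-1})$, uniformly in $n$; moreover the De Giorgi--Nash--Moser estimates above are uniform over $\cB_M$, so $(p_{b_n})$ is bounded in $C^\alpha(\overline Q_T)$. By Arzel\`a--Ascoli, along a further subsequence $p_{b_n}\to p^*$ in $C(\overline Q_T)$ and $\nabla p_{b_n}\rightharpoonup\nabla p^*$ weakly in $L^2(Q_T)$. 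One can then pass to the limit in the weak formulation: the linear terms are immediate, and for $\int_{Q_T}b_n p_{b_n}\cdot\nabla\phi$ one writes $b_n p_{b_n}\cdot\nabla\phi=b_n\cdot(p^*\nabla\phi)+b_n\cdot\big((p_{b_n}-p^*)\nabla\phi\big)$ and uses weak‑$*$ convergence of $b_n$ against the fixed $L^1$ function $p^*\nabla\phi$ together with $\|p_{b_n}-p^*\|_\infty\to 0$. Hence $p^*$ solves \eqref{eq:FP-Dirichlet} with drift $b^*$, and by uniqueness $p^*=p_{b^*}$.

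Finally I would check $J(b^*)\le\liminf_n J(b_n)$ term by term. Since $p_{b_n}\to p_{b^*}$ uniformly and $\int_\Omega p_{b^*}(t)\ge c^*>0$ for all $t\in[0,T]$, the normalizations $1/\int_\Omega p_{b_n}(t)$ converge uniformly and are eventually bounded; hence $\Psi[p_{b_n}(T,\cdot)/\int_\Omega p_{b_n}(T)]$ and $-\epsilon\ln\int_\Omega p_{b_n}(T)$ converge to their values at $b^*$ (continuity of $\Psi$ and of $\ln$), while $\int_0^T\Phi[p_{b_n}(t,\cdot)/\int_\Omega p_{b_n}(t)]\,dt$ is lower semicontinuous by Fatou's lemma (integrand bounded below by $\inf\Phi$ and converging pointwise in $t$ by continuity of $\Phi$). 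For the running $L$‑term, write it as $\int_{Q_T}w_n\,L(\cdot,b_n)$ with $w_n=p_{b_n}/\int_\Omega p_{b_n}\ge 0$ converging uniformly to $w^*=p_{b^*}/\int_\Omega p_{b^*}$; splitting off the weight ($\int_{Q_T}(w_n-w^*)L(\cdot,b_n)\to 0$ because $\|w_n-w^*\|_\infty\to0$ and $\|L(\cdot,b_n)\|_{L^1(Q_T)}$ is bounded) reduces matters to $\liminf_n\int_{Q_T}w^*L(\cdot,b_n)\ge\int_{Q_T}w^*L(\cdot,b^*)$, which holds because $b\mapsto\int_{Q_T}w^*L(\cdot,b)$ is convex (since $w^*\ge 0$ and $L(x,\cdot)$ is convex) and strongly $L^1$‑continuous on $\cB_M$, hence weakly $L^2$‑lower semicontinuous, and $b_n\rightharpoonup b^*$ weakly in $L^2(Q_T)$. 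Adding up, $\liminf_n J(b_n)\ge J(b^*)$, so $J(b^*)=\inf_{\cB_M}J$ and $b_{\rm opt}:=b^*$ works. The main obstacle is this stability step: passing to the limit in the product $b_n p_{b_n}$ requires strong compactness of $(p_{b_n})$, supplied by the uniform energy and parabolic Hölder estimates; a secondary subtlety is the lower semicontinuity of the running cost, where convexity of $L$ in the control is essential and the density weight must be disentangled from the weakly converging control.
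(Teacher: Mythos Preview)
Your proposal is correct and follows essentially the same route as the paper: classical well-posedness and parabolic H\"older/Harnack estimates (the paper cites Lieberman where you invoke De Giorgi--Nash--Moser), then the direct method via weak-$*$ compactness of $(b_n)$, uniform $C^\alpha(\overline Q_T)$ bounds giving $p_{b_n}\to p_{b^*}$ in $C(\overline Q_T)$, and lower semicontinuity of $J$. You actually spell out the lower-semicontinuity step in more detail than the paper does---in particular the splitting $w_n L(\cdot,b_n)=(w_n-w^*)L(\cdot,b_n)+w^*L(\cdot,b_n)$ and the use of convexity of $L(x,\cdot)$ together with $w^*\ge 0$ to obtain weak lower semicontinuity of the running cost---whereas the paper simply asserts $J(b)\le\liminf J(b_n)$ ``using the assumptions on $L,\Phi,\Psi$''.
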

\begin{remark}
	Due to the lack of convexity of $J$, we do not know if the minimizer is unique.
\end{remark}

\begin{proof}
Let us first give some relevant information on~\eqref{eq:FP-Dirichlet} and prove that $J(b)$ is well defined for all $b\in \cB$.

  Standard arguments ensure that for all $b\in \cB$, there exists a unique weak solution  $p\in  L^2(0,T; H^1_0(\Omega))\cap W^{1,2}(0,T;H^{-1}(\Omega)) $ to~\eqref{eq:FP-Dirichlet}.
 Maximum norms estimates  for equations in divergence form (see  \cite[Corollary 9.10]{MR1465184})) tell us  
that there exists a positive constant $\bar p$ which depends on $p_0$, $\Omega$, $T$ and $\| b\|_{L^\infty}$ such that 
\begin{equation}
\label{eq:p-pbar}
  0\le p \le \bar p, \quad \hbox{a.e. in }  Q_T.
\end{equation}
Therefore,  if $\|b\|_\infty\le M$,  $\|p\|_{L^\infty}$ is bounded by a constant which depends only on $p_0$, $\Omega$, $T$ and $M$. 

From~\eqref{eq:p-pbar}, \eqref{eq:FP-Dirichlet} can be written
\begin{equation}
\label{eq:57}
\frac {\partial  p}{\partial t}- \Delta  p +\diver\left(  B\right)  =0,\quad \hbox{in }  Q_T,
\end{equation}
where $B\in L^\infty(Q_T; \RR^d)$.  H{\"o}lder  estimates for  the heat equation with a right hand side in divergence form 
(see  \cite[Theorem 6.33]{MR1465184})
 tell us that $p\in C^{\alpha/2,  \alpha} (\overline Q_T)$, for some $0<\alpha<1$,
and that $\|p\|_{ C^{\alpha/2,  \alpha} (\overline Q_T)}$ is bounded  by a constant which depends only on $p_0$, $\Omega$, $T$ and $\|b\|_\infty$.

Moreover, using the nonnegativity of $p$, a parabolic version of Harnack inequality, see \cite[Theorem 6.27]{MR1465184}, and the fact that $\int_\Omega p_0=1$, we can prove by contradiction  that $p>0$ in $(0,T]\times \Omega$. 
This allows one to define $J(b)$ for all $b\in \cB$.

Consider now a minimizing sequence $(b_n)_{n}$ in $\cB_M$ and 
$p_n$ the solution of~\eqref{eq:FP-Dirichlet} corresponding to $b_n$. 
From the above estimates, we may assume that, up to the extraction of a subsequence,  $b_n\rightharpoonup b$ in $L^\infty(Q_T;\RR^d)$ weak * and that 
$p_n$ tends to $p$ in $C(\overline Q_T)$. It is easy to prove that $p$ is the unique  solution  to~\eqref{eq:FP-Dirichlet} corresponding to $b$.
Using the assumptions on $L, \Phi$ and $\Psi$,
we can also see from the latter convergence  that
\begin{displaymath}
   J(b)\le \liminf  J(b_n).
\end{displaymath}
 Hence, $b$ achieves the minimum of $J$.
\end{proof}

\begin{remark}
  \label{sec:conn-with-finite-2}
The map $b\mapsto p$, where $p$ is the solution of~\eqref{eq:FP-Dirichlet}, is locally Lipschitz continuous from  $L^\infty(Q_T;\RR^d)$ to $ L^2(0,T; H^1_0(\Omega))\cap W^{1,2}(0,T;H^{-1}(\Omega))$.
Hence, $b\mapsto J(b)$ is  locally Lipschitz continuous using the assumptions on $L, \Phi$ and $\Psi$.
\end{remark}

Next, for $M>0$, we obtain first order necessary optimality conditions for~\eqref{eq:def-pb-J-BM} in the form of a forward-backward PDE system. Let us introduce the Hamiltonian $H: \overline \Omega \times \RR^d \to \RR$,
\begin{equation}
\label{eq:def-H}
	H(x,\xi)= \max_{b \in \RR^d;\,  |b|\le M } \left( \xi \cdot b - L(x,b) \right).
\end{equation}

\begin{remark}
	In this section $M$ is fixed. Later on, when necessary, we will change the notation to make the dependency of the Hamiltonian on $M$ explicit. 
\end{remark}

\begin{theorem}
\label{thm:time-optcond}
	Let $b_{\rm{opt}} \in \cB_M$ be a minimizer of $J$ over $\cB_M$. Then  for almost every $(t,x) \in Q_T$, $b_{\rm{opt}}(t,x)$ achieves the maximum in~\eqref{eq:def-H} with $\xi$ replaced by $\left(\int_\Omega p(t,y) dy\right) Du(t,x)$, namely,
	$$
		b_{\rm{opt}}(t,x) = H_\xi\left (x, \left(\int_\Omega  p(t) \right)  Du(t,x)\right),
	$$
	where $(p,u)$ solves the forward-backward PDE system
	\begin{equation}
	\label{eq:time-HJB}
	\left\{ \quad
	\begin{aligned}
	&-\partial_t u(t,x) -\frac {\sigma^2} 2 \Delta u(t,x) +\frac { H\left(x, \left(\int_\Omega  p(t) \right) Du(t,x)\right) }  {\int_\Omega  p(t)}=   \ds \frac {F\left[ \frac {p(t,\cdot)} {\int_\Omega p(t) } \right](x)} {\int_\Omega  p(t)}  +c_1(t)  ,
	 \quad \hbox{ in } Q_T,
	 \\
 	& u = 0, \qquad \hbox{ on } (0,T)\times\partial \Omega,
	\\ 
	& u(T,x)=  \frac 1  {\int_\Omega  p(T)} G\left[ \frac {p(T,\cdot)} {\int_\Omega p(T) } \right](x)  +  c_2(T),
	\qquad \hbox{ in } \Omega ,
    \end{aligned}
	\right.
	\end{equation}

\begin{equation}
\label{eq:time-FP}
 \left\{ \quad 
 \begin{aligned}
	& \partial_t p(t,x) -\frac {\sigma^2} 2 \Delta p(t,x) - \diver\left( p(t,\cdot) H_\xi \left(\cdot,  \left(\int_\Omega  p(t)\right)   Du(t,\cdot) \right) \right)(x)=0, \quad  \hbox{in } Q_T,
	\\
	& p = 0,\quad\quad \qquad \hbox{ on } (0,T)\times\partial \Omega,
	\\
	& p(0,\cdot) = p_0, \qquad \hbox{ in } \Omega, 
  \end{aligned}
\right.
\end{equation}
and $c_1,c_2: [0,T] \to \RR$ are defined by

\begin{equation}
  \label{eq:62}
  \begin{split}
 c_1(t) &=
  -  \frac { {\ds \int_\Omega} p(t,x)  \left( L \left(x,  H_\xi (x, \left(\int_\Omega  p(t) \right)  Du(t,x) ) \right)    + 
    F\left[ \frac {p (t,\cdot)} {\int_\Omega p(t) } \right](x)  \right)  dx }  {\left( \int_\Omega  p(t)\right)^2 },
	\\
     c_2(T) &= - \frac { \epsilon}  {\int_\Omega  p(T) } - \frac {\int_\Omega p(T,x) G\left[ \frac {p(T,\cdot)} {\int_\Omega p(T) } \right](x)  dx}  {\left(\int_\Omega  p(T)\right)^2}  .
  \end{split}
\end{equation}
\end{theorem}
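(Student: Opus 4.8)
The plan is to derive the optimality system by a perturbation (calculus of variations) argument, being careful with the normalizations by $\int_\Omega p_b(t)$. First I would fix the minimizer $b_{\rm opt}$, take an arbitrary admissible variation, and compute the Gâteaux derivative of $J$. Concretely, for $b_\theta = b_{\rm opt} + \theta(\tilde b - b_{\rm opt})$ with $\tilde b \in \cB_M$ and $\theta \in [0,1]$ (this keeps $b_\theta \in \cB_M$ by convexity of the ball), I would differentiate at $\theta = 0^+$. Using Remark~\ref{sec:conn-with-finite-2}, the map $b \mapsto p_b$ is locally Lipschitz into $L^2(0,T;H^1_0(\Omega)) \cap W^{1,2}(0,T;H^{-1}(\Omega))$; I would first establish that it is in fact Gâteaux differentiable, with derivative $\rho = \rho[b_{\rm opt}](\tilde b - b_{\rm opt})$ solving the linearized Dirichlet problem
\begin{equation*}
\partial_t \rho - \tfrac{\sigma^2}{2}\Delta \rho - \diver(b_{\rm opt}\,\rho) = \diver\bigl((\tilde b - b_{\rm opt})\,p\bigr) \ \text{ in } Q_T, \quad \rho = 0 \text{ on } (0,T)\times\partial\Omega, \quad \rho(0,\cdot) = 0.
\end{equation*}
Then, since $\Phi,\Psi$ are Fréchet differentiable and $L$ is $C^1$ in $b$, and since $t \mapsto \int_\Omega p(t)$ is continuous and strictly positive on $[0,T]$ by Theorem~\ref{thm:existence-minimizer-J} (so division is harmless), I would apply the chain rule and the quotient rule to each term of $J$ to get $\frac{d}{d\theta}J(b_\theta)\big|_{\theta=0^+} \ge 0$, expressed as a linear functional of $(\rho, \tilde b - b_{\rm opt})$.

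The second step is to eliminate $\rho$ by introducing the adjoint state $u$. I would test the linearized equation against $u$, the weak solution of the backward Dirichlet problem~\eqref{eq:time-HJB} (temporarily keeping the right-hand side and terminal data as unknown data to be identified), integrate by parts in $t$ and $x$, and use the boundary conditions $\rho = u = 0$ on $(0,T)\times\partial\Omega$ together with $\rho(0,\cdot) = 0$ to kill all the boundary and initial/terminal contributions except the terminal pairing $\int_\Omega \rho(T)\,u(T)$. Matching the resulting expression against the derivative of $J$ computed in step one forces: the terminal condition $u(T,\cdot) = \frac{1}{\int_\Omega p(T)}G\bigl[\frac{p(T,\cdot)}{\int_\Omega p(T)}\bigr] + c_2(T)$; the interior equation for $u$ with source $\frac{1}{\int_\Omega p(t)}F\bigl[\frac{p(t,\cdot)}{\int_\Omega p(t)}\bigr] + c_1(t)$; and, after all $\rho$-terms cancel, a pointwise variational inequality in $b$:
\begin{equation*}
\int_{Q_T} \bigl(\tilde b(t,x) - b_{\rm opt}(t,x)\bigr)\cdot\Bigl( L_b\bigl(x, b_{\rm opt}(t,x)\bigr) - \Bigl(\textstyle\int_\Omega p(t)\Bigr) Du(t,x)\Bigr)\, p(t,x)\, dx\, dt \ \ge\ 0
\end{equation*}
for all $\tilde b \in \cB_M$. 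The constants $c_1(t), c_2(T)$ appear precisely because the terms $\frac{\int_\Omega p_b L}{\int_\Omega p_b}$ and $\Phi[\cdot/\int p_b]$, $\Psi[\cdot/\int p_b]$, $-\epsilon\ln\int p_b$ involve $\int_\Omega p$ in the denominator, so differentiating the quotient produces an extra term $-\frac{(\cdots)}{(\int_\Omega p(t))^2}\int_\Omega \rho(t)$; collecting these and rewriting $\int_\Omega \rho(t)$ as a pairing $(\rho(t), 1)$ (note $1 \notin H^1_0$, so this needs the $C^\alpha$-regularity of $\rho$, or equivalently adding a constant to $u$) gives exactly the formulas~\eqref{eq:62}.

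The third step is local: since $p(t,x) > 0$ a.e. in $Q_T$, the variational inequality localizes to
\begin{equation*}
\bigl(\tilde b - b_{\rm opt}(t,x)\bigr)\cdot\Bigl( L_b(x,b_{\rm opt}(t,x)) - \bigl(\textstyle\int_\Omega p(t)\bigr)Du(t,x)\Bigr) \ \ge\ 0 \quad \text{for all } \tilde b \in \RR^d,\ |\tilde b|\le M,
\end{equation*}
for a.e. $(t,x)$, which is the first-order condition for $b_{\rm opt}(t,x)$ to maximize $\xi\cdot b - L(x,b)$ over the ball with $\xi = (\int_\Omega p(t))Du(t,x)$; strict convexity of $L$ in $b$ gives uniqueness of the maximizer and hence $b_{\rm opt}(t,x) = H_\xi(x,\xi)$ with that $\xi$. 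Substituting this into~\eqref{eq:FP-Dirichlet} yields~\eqref{eq:time-FP}, closing the system.

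I expect the main obstacle to be the rigorous justification of the Gâteaux differentiability of $b \mapsto p_b$ and, relatedly, the legitimacy of the duality pairing $\int_\Omega \rho(t)\,dx$ and $\int_\Omega \rho(T)u(T)\,dx$: the natural energy space only gives $\rho(t) \in H^{-1}$ paired against $H^1_0$-functions, whereas here we need to pair $\rho$ against the constant function $1$ and against $u(T,\cdot)$, which need not vanish on $\partial\Omega$. This is resolved by the parabolic $C^{\alpha/2,\alpha}(\overline Q_T)$ regularity (and positivity) established in the proof of Theorem~\ref{thm:existence-minimizer-J}, which applies verbatim to the linearized equation since its right-hand side is again in divergence form with $L^\infty$ coefficients; once $\rho \in C(\overline Q_T)$ one can integrate by parts against $u$ by a density/approximation argument, tracking the non-homogeneous terminal datum of $u$ carefully. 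A secondary technical point is that $u(T,\cdot)$ as prescribed need not lie in $H^1_0(\Omega)$, so~\eqref{eq:time-HJB} must be understood in the appropriate weak sense (e.g. $u - (\text{lift of the terminal datum}) \in L^2(0,T;H^1_0)$, or via a transposition formulation); everything else is a routine, if lengthy, integration-by-parts bookkeeping.
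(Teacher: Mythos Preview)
Your proposal is correct and follows essentially the same approach as the paper: linearize the Fokker--Planck equation, compute the first variation of $J$ via the quotient rule, introduce the adjoint state $u$ to absorb the $\rho$-terms by integration by parts, and read off the pointwise optimality condition from the remaining expression in $\delta b$. Your version is in fact slightly more careful than the paper's, which simply takes ``a small variation $\delta b$'' and writes the first-order condition as an equality, whereas you use convex combinations $b_\theta = b_{\rm opt} + \theta(\tilde b - b_{\rm opt})$ to remain in $\cB_M$ and correctly obtain a variational inequality; you also flag the technical issues (pairing $\rho$ against the constant $1$, and $u(T,\cdot)\notin H^1_0$) that the paper passes over in silence.
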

\begin{proof}
Let $b \in\cB_M$ and let $p_b$ be the corresponding solution of~\eqref{eq:FP-Dirichlet}. 
Let $\delta b \in L^\infty(Q_T; \RR^d)$ be a small variation of $b$, and $\delta p$ be the corresponding variation of $p$ ($\delta p$ depends on $b$ and $\delta b$ but we do not write explicitly this dependence to save notations). Neglecting higher order terms thanks to Remark~\ref{sec:conn-with-finite-2},
 \begin{equation*}
 \left\{ \quad
 \begin{aligned}
	& \partial_t \delta p   -\Delta \delta p -\diver(  \delta p b) =   \diver(  p_b \delta b) ,\quad \hbox{in } Q_T,
	\\
	& \delta p = 0,\quad \hbox{on } (0,T)\times\partial \Omega,
	\\
	& \delta p(0,\cdot) = 0,\quad \hbox{in }  \Omega.
\end{aligned}
\right.
 \end{equation*}
At leading order,  the variation of the cost is
\begin{equation*}
\begin{split}
	\delta J 
	=
	& -\epsilon \frac {\int_\Omega \delta p (T,x) dx} {\int_\Omega  p (T,x) dx} 
	 \\
	 &
	\quad + \frac{1}{\int_\Omega p_b(T)} \int_\Omega G\left[\frac{p_b(T,\cdot)}{\int_\Omega p_b(T)}\right](x) \delta p(T,x) dx
	- \frac{\int_\Omega \delta p(T)}{\left(\int_\Omega p_b(T)\right)^2} \int_\Omega p_b(T,y) G\left[\frac{p_b(T,\cdot)}{\int_\Omega p_b(T)}\right](y) dy
	\\ 
	&
	\quad + \int_0^T \left\{    \frac {\int_\Omega p_b(t,x) L_b(x, b(t,x)) \cdot \delta b(t,x) dx    + \int_\Omega  \delta  p(t,x)  \left( L(x, b(t,x)) +  F\left[ \frac {p_b(t,\cdot)} {\int_\Omega p_b(t) } \right]\right) dx  }
 {\int_\Omega  p_b(t) }
 	\right.
	\\
	 & \qquad\qquad\qquad 
	\left.-   \frac {\left(\int_\Omega \delta p(t)\right)  \int_\Omega p_b(t,x) \left( L(x, b(t,x)) +  F\left[ \frac {p_b(t,\cdot)} {\int_\Omega p_b(t) } \right]\right) dx } {\left(\int_\Omega p_b(t)\right)^2 }    \right\} dt.  
\end{split}
\end{equation*}

Let us consider the adjoint problem (for the same control $b$):
\begin{equation*}
\left\{ \quad
\begin{array}{l l}
	-\partial_t u -\Delta u + b\cdot Du 
	=
	\ds \frac {  L(\cdot, b) +  F\left[ \frac {p_b} {\int_\Omega p_b } \right] }  {\int_\Omega  p_b }   -  \frac {\int_\Omega p_b  \left( L(\cdot, b) +  F\left[ \frac {p_b} {\int_\Omega p_b } \right]\right) }  {\left(\int_\Omega  p_b\right)^2 }  \mathds{1}_{\Omega},     \quad  & \hbox{in } Q_T,
	\\
	u = 0, \quad & \hbox{on } (0,T)\times\partial \Omega,
	\\ 
	u(T,x) = \ds  -\epsilon \frac { \mathds{1}_{\Omega}(x)}  {\int_\Omega  p_b(T) } 
		+ \frac{G\left[\frac{p_b(T,\cdot)}{\int_\Omega p_b(T)}\right](x)}{\int_\Omega p_b(T)}  
	- \frac{\int_\Omega p_b(T,y) G\left[\frac{p_b(T,\cdot)}{\int_\Omega p_b(T)}\right](y) dy}{\left(\int_\Omega p_b(T)\right)^2}\mathds{1}_{\Omega}(x), \quad & \hbox{in } \Omega .
\end{array}
\right.
\end{equation*}
Let us denote by $u_b$ the solution to the above problem.
Using the equation on $u_b$, integration by parts, and the equation on $\delta p$, we see that, at leading order
\begin{displaymath}
  \begin{split}
	  \delta J
	  &=  \int_0^T \left\{    \frac {\int_\Omega p_b L_b(\cdot, b)\cdot \delta b }  {\int_\Omega  p_b } + \int_\Omega    \delta  p  \left(-\partial_t u_b -\Delta u_b +b\cdot Du_b \right) \right\} dt 
	  + \int_\Omega \delta p (T,x) u_b(T,x)dx     
	\\
	&= \int_0^T \left\{    \frac {\int_\Omega p_b L_b(\cdot, b) \cdot \delta b }  {\int_\Omega  p_b } +\left \langle  \partial_t \delta p   -\Delta \delta p -\diver(  \delta p b) ,  u_b \right \rangle \right\}dt  
	\\
	&= \int_0^T \left\{    \frac {\int_\Omega p_b L_b(\cdot, b) \cdot \delta b }  {\int_\Omega  p_b } - \int_\Omega   p_b \delta b\cdot Du_b \right\} dt.
  \end{split}
\end{displaymath}
Hence a necessary condition for $b = b_{\rm{opt}}$ to be a minimizer of $J$ over $\cB_M$ is that
\begin{displaymath}
 b_{\rm{opt}}(t,x) = H_\xi\left (x, \left(\int_\Omega  p_{b_{\rm{opt}}}(t) \right)  Du_{b_{\rm{opt}}}(t,x)\right).
\end{displaymath}
Plugging this control back into the PDEs for $p_{b_{\rm{opt}}}$ and $u_{p_{b_{\rm{opt}}}}$, we obtain the forward-backward KFP-HJB system~\eqref{eq:time-HJB}--\eqref{eq:time-FP}.
\end{proof}
\begin{remark}
	As a consequence of Theorems~\ref{thm:existence-minimizer-J} and~\eqref{thm:time-optcond}, we get the existence of weak solutions for system~\eqref{eq:time-HJB}--\eqref{eq:time-FP} with $p \in C^0(\overline Q_T) \cap L^2(0,T; H^1_0(\Omega))\cap W^{1,2}(0,T;H^{-1}(\Omega))$ and $u \in  C^0([0,T]; L^2(\Omega))\cap L^2(0,T; H^1_0(\Omega))$.
\end{remark}

\begin{remark}
Setting, for $\mu \in (0,+\infty)$,
\begin{equation}
\label{eq:def-checkH}
\widecheck H(x,\mu,\xi)=  \frac { H\left(x, \mu\xi \right) }  {\mu} = \max_{b \in \RR^d;\,  |b|\le M } \left( \xi \cdot b - \frac{L(x,b)}{\mu} \right),  
\end{equation}
the HJB equation~\eqref{eq:time-HJB} can be written as follows:
\begin{equation}
\label{eq:HJB-checkH}
  \begin{split}
 &-\partial_t u -\Delta u +\widecheck H \left(\cdot, \int_\Omega p(t), Du\right) + \int_\Omega  p(t,y) \widecheck H_\mu \left(y, \int_\Omega p(t) ,Du(t,y)\right) dy \mathds{1}_{\Omega} \\
 =   &  \frac {F\left[ \frac {p(t)} {\int_\Omega p(t) } \right]} {\int_\Omega  p(t)} 
  -  \frac {\ds \int_\Omega\left( p(t)    F\left[ \frac {p(t)} {\int_\Omega p(t) } \right]\right)   }  {\left(\int_\Omega  p(t)\right)^2 }  \mathds{1}_{\Omega}
,    
  \end{split}
\end{equation}
because, if $b_{\rm{opt}}$ is optimal for $J$ over $\cB_M$, then
\begin{displaymath}
  \widecheck H_\mu \left(y, \int_\Omega p(t), Du(t,y)\right) = \frac{L(y, b_{\rm{opt}}(t,y))}{ \left(\int_\Omega p(t)\right)^2}  \mathds{1}_{\Omega}.
\end{displaymath}
\end{remark}

\begin{remark}
\label{eq:Lquadra}
	If, for example, $L(x,b) = f(x) + \frac{1}{2}|b|^2$, then the maximizer in~\eqref{eq:def-checkH} is $\mu\xi$ if $\mu\|\xi\| \leq M$ and $M \frac{\xi}{\|\xi\|}$ otherwise. Thus
	$$
		\widecheck H(x,\mu,\xi) = 
		\begin{cases}
			\frac{1}{2} \mu|\xi|^2 - \frac{f(x)}{\mu}, &\hbox{ if } \mu|\xi| \leq M,
			\\
			M |\xi| - \frac{f(x)}{\mu} - \frac{M^2}{2\mu}, &\hbox{ otherwise.}
		\end{cases}
	$$
\end{remark}

\subsection{Heuristics about long time behavior}

 For $T>0$, let $p^{(T)}$ and $u^{(T)}$ denote respectively the solution of the FP equation~\eqref{eq:time-FP} and the HJB equation~\eqref{eq:time-HJB} on the time interval $[0,T]$, and let $b^{(T)}_{\rm{opt}}$ denote the associated optimal control.
Numerical simulations provided in Section~\ref{sec:statio-nonstatio-turnpike} below indicate that,  at least in some cases, the following 
asymptotic behavior   as $T\to\infty$ may be expected:
for some $\eta \in (0,1/2)$,  for all $t$ s.t. $t/T \in (\eta, 1-\eta)$, 
\begin{displaymath}
  \begin{array}[c]{llcl}
& \ds \frac {p^{(T)}(t,x)}{\int_\Omega p^{(T)}(t,y) dy }&\to& \tilde p(x), \\ \\
& \ds \left( \int_\Omega p^{(T)}(t,y) dy\right)  u^{(T)}(t,x) &\to& \tilde u(x),\\ \\
&\ds \frac 1 {\int_\Omega p^{(T)}(t,y) dy }  \frac d {d t}  \left(\int_\Omega p^{(T)}(t,y) dy\right) & \to  & -\lambda, \\ \\
& \ds  b^{(T)}_{\rm{opt}}(t,x) & \to & \tilde b(x).
  \end{array}
\end{displaymath}

Notice that, if we multiply respectively the HJB equation~\eqref{eq:time-HJB} by $p$ and the Fokker-Planck equation~\eqref{eq:time-FP} by $u$, we integrate in $(t,T)\times \Omega$, and we sum the resulting equations, then, using the definitions of $c_1(t)$ and $c_2(T)$, all the terms but one cancel, and we obtain 
\begin{equation*}
	\int_\Omega u(t,x)p(t,x) dx = -\epsilon, \quad \hbox{for all } t\in [0,T].
\end{equation*}

Plugging the ansatz into the FP equation~\eqref{eq:time-FP}, we obtain:
\begin{equation}
  \label{eq:S-FP} 
  \left\{
    \begin{array}[c]{rcll}
 -\frac{\sigma^2 } 2 \Delta  \tilde p -\diver\left( \tilde p    H_\xi\left (\cdot ,  D\tilde u \right)   \right)   &=&\lambda \tilde p,    \quad  \quad  \quad & \hbox{ in } \Omega,\\
\tilde p&=&0    \quad  & \hbox{ on } \partial \Omega,\\
 \tilde p &\ge& 0   \quad &\hbox{ in } \Omega,\\
\ds \int_\Omega \tilde p &=&1.&
    \end{array}
\right.
\end{equation}

This means that $\lambda$ is the principal eigenvalue of the Dirichet problem associated with the stationary FP equation with drift  $-b=-H_\xi\left (\cdot,   D\tilde u\right)$.

Moreover, plugging the ansatz in the HJB equation~\eqref{eq:time-HJB}, we find:
\begin{equation}
\label{eq:S-HJB}
 \left\{
    \begin{array}[c]{rcl}
 -\frac{\sigma^2 } 2 \Delta  \tilde u + H(\cdot, D\tilde u)
 &=&  \ds  \lambda \tilde u +F[\tilde p]    +c_1 \mathds{1}_{\Omega} , \hfill \hbox{in } \Omega,\\
\tilde u&=&0 \hfill \hbox{on }\partial \Omega,
\\
\ds \int_\Omega \tilde u(x)\tilde p(x) dx & = &  -\epsilon,\\
  c_1 &  = & -   \left( \int_\Omega \tilde p   \left( L\left (\cdot,  H_\xi\left (\cdot,   D\tilde u  \right)  \right)  +F[\tilde p]\right) dx\right).
    \end{array}
    \right.
  \end{equation}
We show in the sequel that \eqref{eq:S-FP}--\eqref{eq:S-HJB} are the necessary optimality conditions of an optimization problem driven by an eigenvalue problem. This will yield existence for  \eqref{eq:S-FP}--\eqref{eq:S-HJB}.

 In  \cite{PLL}, the third author  discussed some mathematical tools that may be used 
in order to prove that the ansatz given above actually describes the asymptotic behavior as $T\to \infty$, such as suitable generalizations to the notion of ergodic problems and to principal eigenvalue problems. However, we stress the fact that, to the best of our knowledge, proving rigorously the ansatz is an open problem. The main reason for that is that other asymptotic behaviors, involving for example time-periodic solutions, may be possible.

\section{Facts about a principal eigenvalue problem}
\label{sec:facts-about-princ}

\subsection{The principal eigenvalue related to an elliptic equation: known facts}\label{sec:princ-eigenv-relat-4}
We recall that $\Omega$ is a bounded domain of $\RR^d$ with a smooth boundary. Let $b$ be a vector field: $b\in L^\infty(\Omega; \RR^d)$.
Recall that the elliptic operator $\cL_b$ is defined by~\eqref{eq:def_Lb}. The weak maximum principle holds for $\cL_b$, see \cite[Theorem 8.1]{MR1814364}:
{\sl
If $v\in H^1(\Omega)$ is a weak subsolution of the Dirichlet problem 
\begin{displaymath}
  \begin{split}
    \cL_b v &=0\quad \hbox{ in }\Omega\\
    v &=0\quad \hbox{ on }\partial\Omega\
  \end{split}
\end{displaymath}
i.e. is such that 
\begin{displaymath}
\int_\Omega Dv\cdot Dw +  w b\cdot Dv  \le 0,
\end{displaymath}
for all  $w\in H^{1}_0(\Omega)$ such that $w\ge 0$ almost everywhere in $\Omega$, and if $v^+ \in H^1_0(\Omega)$, then $v\le 0$ 
almost everywhere in $\Omega$.}

It is also well known, see \cite[Theorem 8.3]{MR1814364}, that  for any $g\in L^2(\Omega)$, there exists a unique weak solution of the Dirichlet problem:
\begin{eqnarray}
\label{eq:1}
 \cL_b u = g \quad \hbox{ in }\Omega,\\
\label{eq:2}
u=0\quad \hbox{ on }\partial \Omega,
\end{eqnarray}
 i.e. $u\in H^1_0(\Omega)$  such that for all $w\in H^1_0(\Omega)$,
\begin{displaymath}
\int_\Omega Du\cdot Dw +  w b\cdot Du  = \int_\Omega g w.
\end{displaymath}
Moreover, $u\in H^2 (\Omega)\cap H^1_0(\Omega)$, see for example \cite[\S 6.3, Theorem 4]{MR2597943} and  the operator $ g\mapsto u$ is bounded from $L^2(\Omega)$ to $H^2 (\Omega)\cap H^1_0(\Omega)$. The operator $\bfS: L^2(\Omega)\to L^2(\Omega)$ defined by $\bfS(g)=u$ is compact.  Let $L^2_+(\Omega)$ be the closed cone made of the nonnegative functions in $L^2(\Omega)$: from the weak maximum principle, we know that  if $ g\in L^2_+(\Omega) $, then $\bfS g\in L^2_+(\Omega)$.
\\
From Agmon-Douglis-Nirenberg theorems, see \cite{MR0125307} or \cite[Theorem 9.15]{MR1814364}, we know that if  $g\in L^p(\Omega)$, for $2<p<\infty$, then $u\in W^{2,p}(\Omega)\cap W^{1,p}_0(\Omega)$ and the operator  $g\mapsto u$ is bounded from $L^p(\Omega)$ to $W^{2,p} (\Omega)\cap W^{1,p}_0(\Omega)$.
 \\
Let $C_0(\overline\Omega)$ be the space of  real valued continuous functions  defined on $\overline \Omega$ and vanishing on $\partial \Omega$. For any $0<\gamma<1$ that we fix, it will be useful to define the Banach space  $\FF= C^{1, \gamma}(\overline \Omega)\cap C_0(\overline \Omega)$ and the cone $\FF_+= \{ v\in \FF: v\ge 0 \in \overline \Omega\}$. Using the latter regularity results, we know that if  $g \in C(\overline \Omega)$, then 
 $u\in W^{2,p}(\Omega)\cap W^{1,p}_0(\Omega)$ for all $ p<\infty$.  Sobolev embeddings of $W^{2,p}(\Omega)\cap W^{1,p}_0(\Omega)$ in  $\FF$ for $p$ large enough imply that the linear operator  $g\mapsto u$ is compact from $ C(\overline \Omega)$ to $\FF$. Let $\bfT$ be the restriction of the latter operator to $\FF$. 
\\
Finally, as a consequence of Hopf lemma, which may be applied since $b\in L^\infty(\Omega;\RR^d)$, we know that if $0\not\equiv g\ge 0$ in $\overline\Omega$, then $\bfT g\in \rm{Int}(\FF_+)$. 
\\
The previously mentioned facts allowes us to state the following theorem:
\begin{theorem}
\label{sec:princ-eigenv-relat-5}
There exists a positive  real number $\lambda_b>0$, such that
\begin{enumerate}
\item $1/\lambda_b$ is an eigenvalue of $\bfT$
\item  The related eigenspace is one-dimensional and can be written $\RR u$ where $u\in \FF_+$, $u>0$ in $\Omega$ 
\item For all complex number $\mu$ such that $\mu \bfT(v) = v$, for some non identically zero function $v$ whose real and imaginary part belong to $\FF$, 
  \begin{displaymath}
    \rm{Re}(\mu)\ge \lambda_b.
  \end{displaymath}
\end{enumerate}
\end{theorem}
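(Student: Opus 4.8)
The plan is to apply the Krein--Rutman theorem to the compact operator $\bfT: \FF \to \FF$. The key preliminary observations have all been assembled in the excerpt: $\FF = C^{1,\gamma}(\overline\Omega)\cap C_0(\overline\Omega)$ is a Banach space ordered by the closed cone $\FF_+$, which has nonempty interior (this is why we work in $C^{1,\gamma}$ rather than merely $C^0$: the interior of $\FF_+$ consists of functions that are strictly positive inside $\Omega$ with strictly negative outward normal derivative on $\partial\Omega$, and $C^{1,\gamma}$ regularity makes this notion of interior genuinely nonempty); $\bfT$ is compact; $\bfT(\FF_+)\subset\FF_+$ by the weak maximum principle; and, crucially, $\bfT$ is \emph{strongly positive}, i.e. $\bfT g \in \mathrm{Int}(\FF_+)$ whenever $0\not\equiv g \ge 0$, as a consequence of the Hopf lemma. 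The strong form of the Krein--Rutman theorem (see e.g. Dancer, or the version in \cite{MR1814364}) then applies directly: a compact, strongly positive operator on an ordered Banach space with $\mathrm{Int}(\FF_+)\ne\emptyset$ has a positive spectral radius $r = r(\bfT)$ which is a simple eigenvalue, the unique eigenvalue with an eigenvector in $\FF_+$, and every other eigenvalue $\mu'$ of the complexification satisfies $|\mu'| < r$.

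With that in hand I would set $\lambda_b := 1/r(\bfT) > 0$ and read off the three claims. Claim (1) is immediate: $1/\lambda_b = r(\bfT)$ is an eigenvalue of $\bfT$. Claim (2) is the simplicity statement: the eigenspace is spanned by a single $u$, and by strong positivity the normalized eigenvector lies in $\mathrm{Int}(\FF_+)$, hence $u > 0$ in $\Omega$; one should note the eigenvector is in $\FF$ but, being $\bfT$ of something, automatically enjoys the $W^{2,p}$ regularity recorded earlier, so it is a genuine (strong) solution of $\cL_b u = \lambda_b u$. Claim (3) requires a small translation: Krein--Rutman gives that any eigenvalue $\mu'$ of $\bfT$ with eigenfunction in the complexified space other than $r$ satisfies $|\mu'| < r$, equivalently $|\mu'| \le r$ with equality only for $\mu' = r$. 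If $\mu \bfT(v) = v$ with $v \ne 0$ complex, then $\mu \ne 0$ and $1/\mu$ is such an eigenvalue $\mu'$, so $|1/\mu| \le r = 1/\lambda_b$, giving $|\mu| \ge \lambda_b$, and in particular $\mathrm{Re}(\mu) \le |\mu|$ is the wrong direction — so instead I would argue: $|1/\mu| \le 1/\lambda_b$ means $\lambda_b \le |\mu|$; to get $\mathrm{Re}(\mu) \ge \lambda_b$ one uses that for the spectral radius of a strongly positive compact operator, not only $|\mu'| \le r$ but $\mu' \ne r \Rightarrow |\mu'| < r$, and more precisely the peripheral spectrum of $\bfT$ is reduced to $\{r\}$; then writing $\mu' = 1/\mu = \rho e^{i\theta}$ with $\rho \le r$ and $\mu = \frac1\rho e^{-i\theta}$, we get $\mathrm{Re}(\mu) = \frac{\cos\theta}{\rho} \ge \frac{\cos\theta}{r}$ is not enough either. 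The cleanest route is: $\mathrm{Re}(\mu) \ge \lambda_b$ is equivalent, after writing $\mu = a + ib$, to $a \ge \lambda_b$; and $1/\mu = (a-ib)/(a^2+b^2)$ has modulus $1/\sqrt{a^2+b^2} \le 1/\lambda_b$ automatically from $|1/\mu| \le r$, which only gives $a^2 + b^2 \ge \lambda_b^2$. So claim (3) as stated is strictly stronger than the bare modulus bound and genuinely uses that $\bfT$ is \emph{primitive}: the only eigenvalue on the circle $|\mu'| = r$ is $r$ itself. I would invoke exactly this refinement of Krein--Rutman (strong positivity implies the peripheral spectrum is a single simple point) and then deduce that any $\mu' \ne r$ lies strictly inside the disk, so in particular $1/\mu$ with $\mathrm{Re}(\mu) < \lambda_b$ would force...

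Here I should be careful and honest in the write-up: the inequality $\mathrm{Re}(\mu)\ge\lambda_b$ for \emph{all} other eigenvalues is precisely the statement that $\lambda_b$ is the principal eigenvalue in the PDE sense, and it follows from Krein--Rutman together with the observation that if $\mu\ne\lambda_b$ then $1/\mu$ is not on the peripheral spectrum, hence $|1/\mu|<1/\lambda_b$, i.e. $|\mu|>\lambda_b$ — and then one still needs a separate argument for the real part. The standard trick is this: suppose $\mu\bfT v = v$, normalize, and pair against the principal eigenfunction $\phi^*$ of the adjoint $\bfT^*$ (which exists, lies in the dual cone, and has the same eigenvalue $r$ by Krein--Rutman applied to $\bfT^*$, which inherits positivity). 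From $r\,\bfT^*\phi^* = \phi^*$ and $\mu\bfT v = v$ one computes $\langle \phi^*, v\rangle = \mu\langle\phi^*,\bfT v\rangle = \frac{\mu}{r}\langle \bfT^*\phi^*\cdot r, v\rangle$ — wait, more directly $\langle\phi^*, v\rangle = \mu\langle\phi^*,\bfT v\rangle = \mu\langle \bfT^*\phi^*, v\rangle = \frac{\mu}{r}\langle\phi^*,v\rangle$, so either $\langle\phi^*,v\rangle = 0$ or $\mu = r$. So for $\mu\ne r$ the eigenfunction $v$ is ``orthogonal'' to the strictly positive functional $\phi^*$, which is the usual mechanism. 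To actually extract $\mathrm{Re}(\mu)\ge\lambda_b$ I would instead follow the argument in \cite{MR1814364} or in the semigroup literature: translating, the operator is related to a sub-Markovian semigroup and $-\lambda_b$ is the spectral bound of its generator; any other eigenvalue of the generator has real part $\le -\lambda_b$, which is exactly claim (3) after passing from the generator to the resolvent/solution operator $\bfT$.

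The main obstacle is therefore not the existence of $\lambda_b$ and the principal eigenfunction — that is a direct citation of Krein--Rutman given the compactness, positivity, and Hopf-lemma strong positivity already established — but rather nailing down claim (3) in the form ``$\mathrm{Re}(\mu)\ge\lambda_b$'' as opposed to merely ``$|\mu|\ge\lambda_b$.'' My plan for that is to reduce to the generator picture: write $\bfT = (\cL_b|_{\FF})^{-1}$, observe that for $\kappa>0$ large the operator $(\cL_b+\kappa I)^{-1}$ generates, after suitable normalization, an analytic positivity-preserving semigroup on $\FF$ (or on $C_0(\overline\Omega)$), apply the Krein--Rutman / Perron--Frobenius theory for such semigroups (e.g. Arendt–Grabosch–et al., or the elementary version in the cited references) which gives that the spectral bound is an eigenvalue with positive eigenfunction and dominates the real parts of all other spectral values, and then translate this statement about the spectrum of $\cL_b$ back into the statement about the eigenvalues $\mu$ of $\bfT$ via $\mu = \lambda \iff \lambda$ is an eigenvalue of $\cL_b$. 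The bookkeeping between "$1/\lambda_b$ eigenvalue of $\bfT$'' and "$\lambda_b$ eigenvalue of $\cL_b$'' and the sign conventions is the only place where care is needed; everything else is off-the-shelf.
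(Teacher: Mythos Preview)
Your Krein--Rutman approach is correct for claims (1) and (2) and matches the paper's in spirit. The paper itself gives essentially no argument: it observes that the statement is \cite[Theorem 3, p.~340]{MR2597943} (Evans) for smooth $b$, and that Evans's proof carries over verbatim to $b\in L^\infty(\Omega;\RR^d)$ because it uses only the compactness of $\bfT$, the strong maximum principle, and the Hopf lemma --- all of which have already been recorded in the preceding paragraphs for bounded measurable drift.

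Where you diverge is claim (3). You correctly diagnose that bare Krein--Rutman yields only $|\mu|\ge\lambda_b$, strictly weaker than $\mathrm{Re}(\mu)\ge\lambda_b$; your several false starts make this plain. Your eventual plan --- pass to the generator $\cL_b$, invoke Perron--Frobenius theory for positive analytic semigroups (spectral bound dominates the real parts of all other spectral values), and translate back via the resolvent --- would work in principle, but it is heavy machinery, and your sketch leaves the bookkeeping (semigroup domains on $\FF$ versus $C_0(\overline\Omega)$, the precise eigenvalue dictionary between $\bfT$ and $\cL_b$) undone. The argument in Evans that the paper invokes is far more elementary: with the positive principal eigenfunction $u_1$ and a complex eigenfunction $w$ for $\mu$ in hand, one applies the strong maximum principle directly to a suitable real function built from $u_1$ and $|w|$ to force $\mathrm{Re}(\mu)\ge\lambda_b$. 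No semigroup theory enters, which is precisely why the paper can assert the extension to $L^\infty$ drift in a single sentence.
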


\begin{proof}
  When $b\in C^{\infty}(\bar \Omega;\RR^d)$, Theorem \ref{sec:princ-eigenv-relat-5} is exactly  \cite[Theorem 3, page 340]{MR2597943}. However, in view of the facts that we have recalled above, the proof of  \cite[Theorem 3, page 340]{MR2597943},
which only requires the compactness of $\bfT$, the strong maximum principle and Hopf lemma, can be repeated in the case when $b\in L^\infty(\Omega;\RR^d)$.
\end{proof}
\begin{remark}\label{sec:princ-eigenv-relat-6}
 Theorem \ref{sec:princ-eigenv-relat-5} implies that the spectral radius of $\bfT$ is positive and that it coincides with $1/\lambda_b$.
\end{remark}

The positive number $\lambda_b$  is called the principal eigenvalue related to the Dirichlet problems (\ref{eq:1})-(\ref{eq:2}). Setting $e_b= u /\int_\Omega u dx$, we see that  $e_b\in \FF_+$ is the unique  weak solution of 
\begin{eqnarray}
\label{eq:3}
  -\Delta e_b + b\cdot De_b = \lambda_b e_b \quad \hbox{ in }\Omega,\\
\label{eq:4}
e_b=0\quad \hbox{ on }\partial \Omega,\\
\label{eq:5}
\int_\Omega e_b=1,
\end{eqnarray}
 and $e_b>0$ in $\Omega$.

If $\mu\not =0$ is an eigenvalue of $\bfS$ and $w$ is any related eigenvector,
 then the regularity results for the Dirichlet problem
\begin{eqnarray*}
  \cL_b w = \frac 1 \mu  w \quad \hbox{ in }\Omega,\\
w=0\quad \hbox{ on }\partial \Omega,
\end{eqnarray*}
imply that $w\in \FF$, see for example \cite[Theorem 9.15]{MR1814364}. Hence, the  spectrum and the eigenspaces of $\bfS$ and $\bfT$ coincide.\\

In \cite{MR1258192}, Berestycki, Nirenberg and Varadhan propose a definition of the principal eigenvalue of $\cL_b $ which can be applied even if $\partial \Omega$ is not smooth: 
\begin{equation}
\label{eq:6}
\lambda_b=\sup\{\lambda : \exists  \phi\in W^{2,d}_{\rm{loc}}(\Omega),\;  \phi >0  \hbox{ in } \Omega, \hbox{ such that }   \cL_b \phi -\lambda\phi \ge 0   \hbox{ in } \Omega \}.
\end{equation}
 As stated in \cite{MR1258192} (see also \cite{MR1226957}), if $\partial \Omega$ is smooth, then the definition in (\ref{eq:6}) is equivalent to the former definition 
of the principal eigenvalue of $\cL_b $. Hence, if $\partial \Omega$ is smooth, (\ref{eq:6}) is a characterization of the principal eigenvalue of $\cL_b $. Note that  no restriction is made in (\ref{eq:6}) on the behavior of $\phi$ near $\partial \Omega$.
\\
 The main interest of (\ref{eq:6}) is that it is a variational characterization of the principal eigenvalue
(recall that the standard characterization for self-adjoint operators is irrelevant in the present context):
 indeed, as proved in \cite{MR1258192}, 
 \begin{equation}
   \label{eq:7}
\lambda_b=\sup_{\phi\in W_{\rm{loc}}^{2,d}(\Omega)}
 \inf_\Omega \frac {\cL_b \phi}\phi.
 \end{equation}

Note that in \cite{MR0425380}, Donsker and Varadhan gave a slightly different variational formula for the principal value of $\cL_b $ and 
proved (\ref{eq:7}) in the case when $b$ is continuous and $\partial \Omega $ is smooth.

As a consequence of (\ref{eq:6}), it is proven in \cite{MR1258192} that for any domain $\Omega'\subset \Omega$, $\lambda_b(\Omega)\le \lambda_b(\Omega')$ and that the inequality is strict if the set inclusion is strict (see Theorem~\ref{sec:princ-eigenv-relat-3} below, which is in fact a much stronger result).

An important consequence of (\ref{eq:7}), of the latter observation and of the maximum principle is an upper bound on $\lambda_b$:
\begin{lemma} [Lemma 1.1. in \cite{MR1258192}]\label{sec:princ-eigenv-relat-1}
 Suppose that $\Omega$ contains a ball of radius $R\le 1$. Then
\begin{displaymath}
 0< \lambda_b\le \frac C{R ^2 }
\end{displaymath}
where $C$ depends only on $d$ and $\|b\|_{L^\infty(\Omega;\RR^d)}$.
\end{lemma}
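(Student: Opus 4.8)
The plan is to establish the upper bound on $\lambda_b$ by exhibiting an explicit test function in the variational characterization \eqref{eq:7}, combined with the monotonicity of the principal eigenvalue under domain inclusion. First I would invoke the monotonicity statement recalled just above the lemma: if $\Omega$ contains a ball $B_R$ of radius $R \le 1$, then $\lambda_b(\Omega) \le \lambda_b(B_R)$, where $\lambda_b(B_R)$ denotes the principal eigenvalue of $\cL_b$ on the smaller domain $B_R$ (with the same field $b$, restricted to $B_R$). This reduces the problem to proving the bound on a ball, where one can work with radial test functions and explicit computations.

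On the ball $B_R$, I would use the variational formula \eqref{eq:6}/\eqref{eq:7}: it suffices to produce a single function $\phi \in W^{2,d}_{\mathrm{loc}}(B_R)$ with $\phi > 0$ in $B_R$ such that $\cL_b \phi - \lambda \phi \ge 0$ for $\lambda = C/R^2$. The natural candidate is a smooth, strictly positive radial bump, e.g. $\phi(x) = \psi(|x|^2/R^2)$ for a fixed smooth profile $\psi$ on $[0,1)$ bounded below away from $0$ (one may even take $\phi \equiv 1$, or a paraboloid-type profile); after rescaling $x = R y$, the Laplacian term $-\tfrac{\sigma^2}{2}\Delta\phi$ picks up a factor $R^{-2}$ and the drift term $b \cdot D\phi$ picks up a factor $R^{-1}\|b\|_{L^\infty}$. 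Dividing through by $\phi$, which is bounded above and below by positive constants depending only on the chosen profile, one gets $\inf_{B_R} \frac{\cL_b \phi}{\phi} \ge -C/R^2$ where $C$ depends only on $d$ (through the profile) and $\|b\|_{L^\infty(\Omega;\RR^d)}$ (using $R \le 1$ to absorb the lower-order $R^{-1}$ term into $R^{-2}$). Hence $\lambda_b(B_R) \ge -C/R^2$; but more care is needed since we want a lower bound matching the claimed $C/R^2$ from above, so actually the argument should be: choosing $\phi$ appropriately gives $\cL_b \phi \ge -\frac{C}{R^2}\phi$ is the wrong direction — instead one wants $\cL_b \phi \le \frac{C}{R^2}\phi$ to bound $\lambda_b$ from above via the fact that no $\lambda$ larger than this can admit a positive supersolution, which is where the maximum principle enters: if $\lambda > \lambda_b$ there is no positive $\phi$ with $\cL_b\phi \ge \lambda\phi$, so producing such $\phi$ for $\lambda$ just below $C/R^2$ and deriving a contradiction with positivity/boundary behavior gives the bound. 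The cleanest route is to test \eqref{eq:7} on the first Dirichlet eigenfunction-type barrier: take $\phi$ to be the positive solution of $-\Delta\phi = \phi$ scaled so it does not vanish inside $B_R$, e.g. a rescaled cosine/Bessel profile chosen so that $\cL_b\phi / \phi$ is controlled.

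The positivity of $\lambda_b$ (the "$0 <$" part) is immediate from Theorem~\ref{sec:princ-eigenv-relat-5} (or Remark~\ref{sec:princ-eigenv-relat-6}), so the substance is the upper bound. The main obstacle I anticipate is getting the constant to depend only on $d$ and $\|b\|_{L^\infty}$ and not on $R$ beyond the stated $R^{-2}$ scaling: this forces the use of $R \le 1$ so that the first-order (drift) contribution, which scales like $R^{-1}$, is dominated by $R^{-2}$, and it requires the test-function profile to be chosen once and for all independently of $b$ and $R$ with all its derivatives and its infimum under control. A secondary technical point is making sure the chosen $\phi$ genuinely lies in $W^{2,d}_{\mathrm{loc}}(B_R)$ and is admissible in \eqref{eq:6}; a smooth radial profile handles this trivially, and no boundary condition on $\phi$ is needed since \eqref{eq:6} places no restriction near $\partial\Omega$. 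This is exactly the argument of \cite[Lemma 1.1]{MR1258192}, so I would present it concisely and cite that reference for the details.
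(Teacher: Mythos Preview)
The paper does not give its own proof of this lemma; it is simply quoted from \cite{MR1258192}. So there is nothing in the paper to compare against beyond the citation, and your final sentence (pointing to \cite[Lemma~1.1]{MR1258192}) is in fact all that the paper does.

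That said, your sketch has a genuine gap that you yourself half-notice but do not repair. The characterization \eqref{eq:7} is a \emph{supremum} over test functions $\phi$, so exhibiting a single $\phi$ can only give a \emph{lower} bound on $\lambda_b$, never the upper bound $\lambda_b \le C/R^2$ that is claimed. You write ``more care is needed'' and then try to argue via ``no $\lambda > \lambda_b$ admits a positive supersolution'', but producing a positive $\phi$ with $\cL_b\phi \ge \lambda\phi$ for some $\lambda$ again just certifies $\lambda_b \ge \lambda$; and your final suggestion (``test \eqref{eq:7} on the first Dirichlet eigenfunction-type barrier'') is still plugging into the sup--inf formula. None of these variants can yield an upper bound.

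The actual argument in \cite{MR1258192} runs the other way. After reducing to $B_R$ by domain monotonicity (which you state correctly), one constructs a smooth radial function $\phi$ with $\phi>0$ in $B_R$ and $\phi=0$ on $\partial B_R$ --- for instance a suitable power of $R^2-|x-x_0|^2$, the exponent chosen in terms of $\|b\|_{L^\infty}$ --- such that $\cL_b\phi \le \Lambda\phi$ in $B_R$ with $\Lambda = C/R^2$. One then invokes the refined maximum principle of \cite{MR1258192}: for every $\lambda < \lambda_b$, the operator $\cL_b-\lambda$ satisfies the maximum principle, so any $v$ with $(\cL_b-\lambda)v\le 0$ in $B_R$ and $v\le 0$ on $\partial B_R$ must be $\le 0$ in $B_R$. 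Applying this with $v=\phi$ and $\lambda=\Lambda$ would force $\phi\le 0$, a contradiction; hence $\lambda_b \le \Lambda$. The point you were missing is that the upper bound comes from a \emph{subsolution vanishing on the boundary} together with the maximum principle, not from the sup--inf formula \eqref{eq:7}.
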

The dependence of $\lambda_b$ with respect to $b$ is also addressed in \cite{MR1258192}:
\begin{proposition}[Proposition 5.1 in \cite{MR1258192}]\label{sec:princ-eigenv-relat-2}
  \label{sec:princ-eigenv-relat}
The map $b\mapsto \lambda_b$  is  locally Lipschitz continuous from $L^\infty(\Omega;\RR^d)$ to $\RR$. The Lipschitz constant 
of the restriction of the latter map to $\cB_M = \{ b: \|b\|_{L^\infty(\Omega;\RR^d)} \le M\}$ can be chosen to depend only on $\Omega$ and $M$.
\end{proposition}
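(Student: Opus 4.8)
The plan is to compare $\lambda_b$ and $\lambda_{b'}$ for two drifts $b,b'\in\cB_M$ by means of an identity obtained from pairing the principal eigenfunction of $\cL_{b'}$ against that of the formal adjoint $\cL_b^*$. Since any $b\in L^\infty(\Omega;\RR^d)$ lies in the interior of some $\cB_M$, it suffices to establish the Lipschitz bound on $\cB_M$ with constant depending only on $\Omega$ and $M$. Let $e_b\in\FF_+$ be the principal eigenfunction of $\cL_b$ given by Theorem~\ref{sec:princ-eigenv-relat-5}, normalized by $\int_\Omega e_b=1$ (so $e_b>0$ in $\Omega$), and let $\cL_b^*v=-\Delta v-\diver(bv)=-\diver(\nabla v+bv)$ be the formal adjoint. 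Since $\cL_b^*$ is uniformly elliptic, in divergence form, with $L^\infty$ coefficients, the theory recalled in Section~\ref{sec:princ-eigenv-relat-4} applies verbatim (weak maximum principle for divergence-form operators, compactness of the solution operator, Krein--Rutman, Hopf lemma) and produces a principal eigenvalue $\lambda_b^*>0$ with positive eigenfunction $e_b^*\in H^1_0(\Omega)$, normalized by $\int_\Omega e_b^*=1$; pairing the weak equations for $e_b$ and $e_b^*$ yields $\lambda_b^*=\lambda_b$. Now I test the weak form of $\cL_{b'}e_{b'}=\lambda_{b'}e_{b'}$ with $e_b^*$, test the weak form of $\cL_b^*e_b^*=\lambda_b e_b^*$ with $e_{b'}$, and subtract; the Dirichlet bilinear term $\int_\Omega\nabla e_{b'}\cdot\nabla e_b^*$ cancels and I am left with
\[
(\lambda_{b'}-\lambda_b)\int_\Omega e_{b'}\,e_b^* \;=\; \int_\Omega \big((b'-b)\cdot\nabla e_{b'}\big)\,e_b^* ,
\]
whence, using $e_b^*\ge 0$,
\[
|\lambda_{b'}-\lambda_b| \;\le\; \|b'-b\|_{L^\infty}\,\frac{\int_\Omega |\nabla e_{b'}|\,e_b^*}{\int_\Omega e_{b'}\,e_b^*}.
\]
It then remains to bound the quotient on the right by a constant depending only on $\Omega$ and $M$.

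For the numerator I would collect the \emph{uniform} a priori bounds holding for all $b\in\cB_M$: by Lemma~\ref{sec:princ-eigenv-relat-1}, $0<\lambda_b\le C(d,M)/R^2$; testing the weak eigenfunction equation with $e_b$ itself and using $\int_\Omega e_b=1$ gives $\|\nabla e_b\|_{L^2(\Omega)}\le C(\Omega,M)$; and a standard elliptic bootstrap --- the $L^p$--$W^{2,p}$ estimates for $\cL_b$ recalled in Section~\ref{sec:princ-eigenv-relat-4}, respectively De Giorgi--Nash--Moser local boundedness for the divergence-form operator $\cL_b^*$ --- combined with the self-improving inequality $\|e_b\|_{L^2(\Omega)}^2\le\|e_b\|_{L^\infty(\Omega)}\int_\Omega e_b=\|e_b\|_{L^\infty(\Omega)}$ yields $\|e_b\|_{L^\infty(\Omega)},\,\|e_b^*\|_{L^\infty(\Omega)}\le C(\Omega,M)$. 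Hence $\int_\Omega|\nabla e_{b'}|\,e_b^*\le\|e_b^*\|_{L^\infty(\Omega)}\,\|\nabla e_{b'}\|_{L^1(\Omega)}\le C(\Omega,M)$.

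The main obstacle is the \emph{lower} bound on $\int_\Omega e_{b'}\,e_b^*$, uniform in $b,b'\in\cB_M$: one must rule out that the eigenfunctions degenerate (concentrate near $\partial\Omega$, or otherwise decay) as the drift ranges over $\cB_M$. I would argue as follows. Using $\|e_b\|_{L^\infty(\Omega)}\le C(\Omega,M)$ and $\int_\Omega e_b=1$, fix a connected compact set $K\subset\Omega$, depending only on $\Omega$ and $M$, such that $|\Omega\setminus K|\le (2C(\Omega,M))^{-1}$; then for every $b\in\cB_M$,
\[
\int_K e_b \;=\; 1-\int_{\Omega\setminus K} e_b \;\ge\; 1-C(\Omega,M)\,|\Omega\setminus K| \;\ge\; \tfrac12 ,
\]
and likewise $\int_K e_b^*\ge\tfrac12$. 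The Harnack inequality applied to the nonnegative solution $e_b$ of $\cL_b e_b-\lambda_b e_b=0$ --- and its divergence-form counterpart applied to $e_b^*$ --- has, when chained along the connected compact $K$, a constant that depends only on $M$, the upper bound $C(d,M)/R^2$ for the zeroth-order coefficient, $K$ and $\Omega$, hence not on $b$; therefore $\inf_K e_b\ge c_H^{-1}|K|^{-1}\int_K e_b\ge (2c_H|K|)^{-1}=:\delta(\Omega,M)>0$ and similarly $\inf_K e_b^*\ge\delta^*(\Omega,M)>0$. Consequently $\int_\Omega e_{b'}\,e_b^*\ge\int_K e_{b'}\,e_b^*\ge |K|\,\delta\,\delta^*=:c_0(\Omega,M)>0$. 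Plugging the three estimates into the displayed inequality yields $|\lambda_{b'}-\lambda_b|\le (C(\Omega,M)/c_0(\Omega,M))\,\|b'-b\|_{L^\infty}$ on $\cB_M$, and local Lipschitz continuity on $L^\infty(\Omega;\RR^d)$ follows immediately.

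An alternative, closer to the variational viewpoint of Section~\ref{sec:princ-eigenv-relat-4}, would use the characterization~\eqref{eq:7}: one cannot simply insert $\phi=e_b$ as an admissible test function for $\lambda_{b'}$, since $|\nabla e_b|/e_b$ blows up on $\partial\Omega$; but $\phi=e_b+\|b-b'\|_{L^\infty}\,v$, where $v>0$ solves $\cL_{b'}v=\|\nabla e_b\|_{L^\infty(\Omega)}$ in $\Omega$ with $v=0$ on $\partial\Omega$, is a positive supersolution for $\cL_{b'}-\mu$ with $\mu=\lambda_b-O(\|b-b'\|_{L^\infty})$, provided one has the uniform boundary comparison $v\le C(\Omega,M)\,e_b$ --- a quantitative, $b$-uniform version of the Hopf lemma. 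This route needs essentially the same non-degeneracy input near $\partial\Omega$ and gives the same bound.
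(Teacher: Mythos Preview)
The paper does not supply its own proof of this proposition: it is simply quoted from Berestycki--Nirenberg--Varadhan \cite{MR1258192}, so there is no argument in the paper to compare against. What I can do is assess your proposal on its own merits.

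Your duality argument is correct and is in fact essentially the route taken in \cite{MR1258192}. The pairing identity
\[
(\lambda_{b'}-\lambda_b)\int_\Omega e_{b'}\,e_b^{*}=\int_\Omega\bigl((b'-b)\cdot\nabla e_{b'}\bigr)\,e_b^{*}
\]
is exactly right, and the three ingredients you need afterwards --- a uniform $L^\infty$ bound on the normalized eigenfunctions, a uniform $H^1$ bound on $e_{b'}$, and a uniform positive lower bound on $\int_\Omega e_{b'}e_b^{*}$ --- are all obtained by the mechanisms you describe. Two small comments on the presentation. First, the sentence ``testing the weak eigenfunction equation with $e_b$ itself and using $\int_\Omega e_b=1$ gives $\|\nabla e_b\|_{L^2}\le C(\Omega,M)$'' is slightly elliptic: the energy identity only gives $\|\nabla e_b\|_{L^2}\le C(M,\Omega)\|e_b\|_{L^2}$, so you do need the $L^\infty$ bound (and hence the $L^2$ bound) \emph{before} you can close the gradient estimate; your self-improving argument $\|e_b\|_{L^2}^2\le\|e_b\|_{L^\infty}\le C\|e_b\|_{L^2}$ does exactly this, but the order of the steps should be made explicit. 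Second, for the $L^\infty$ bound on $e_b^{*}$ you invoke De Giorgi--Nash--Moser for divergence-form operators; note that the zeroth-order coefficient here is $-\lambda_b<0$, so one must use the version of the global subsolution estimate (e.g.\ \cite[Theorem~8.15]{MR1814364}) that allows a bounded zeroth-order term of either sign, with constants depending on its $L^\infty$ norm --- which is fine since $\lambda_b\le C(\Omega,M)$ by Lemma~\ref{sec:princ-eigenv-relat-1}.

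Your ``alternative'' via the characterization \eqref{eq:7} is indeed closer in spirit to a one-sided comparison argument, and you correctly identify the obstruction (the failure of $|\nabla e_b|/e_b$ to be bounded near $\partial\Omega$) and a repair. Either route works; the duality argument you wrote first is the cleaner of the two.
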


\begin{theorem}[Theorem 2.4 in \cite{MR1258192}] \label{sec:princ-eigenv-relat-3}
  Consider a bounded domain $\Omega$ of $\RR^d$, $b\in L^\infty(\Omega; \RR^d)$ and a compact set $K$ contained in $\Omega$  such that 
$|K|\ge \delta_0>0$ (where $|K|$ denotes the Lebesgue measure of $K$). Let $\lambda_b(\Omega)$ and $\lambda_b(\Omega\setminus K)$ be the principal eigenvalues associated respectively with $\Omega$ and $\Omega\setminus K$.
Then
\begin{displaymath}
  \lambda_b(\Omega\setminus K) -\lambda_b(\Omega)\ge \bar \delta>0,
\end{displaymath}
where $\bar \delta$ depends only on $\Omega$, $\|b\|_{L^\infty(\Omega;\RR^d)}$ and $\delta_0$.
\end{theorem}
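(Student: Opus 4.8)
The non-strict inequality $\lambda_b(\Omega\setminus K)\ge\lambda_b(\Omega)$ is immediate from the variational characterization~\eqref{eq:6} applied to $\Omega\setminus K$, since a positive supersolution on $\Omega$ restricts to one on $\Omega\setminus K$; the whole difficulty is to make the gap $\bar\delta$ independent of $K$. The plan is to convert this into an ergodic estimate by a ground state transform. Let $e=e_b>0$ be the principal eigenfunction of $\cL_b$ on $\Omega$, see~\eqref{eq:3}--\eqref{eq:5}, and for a function $\psi$ set $\phi=e\,\psi$. A direct computation gives $\cL_b\phi-\lambda_b(\Omega)\,\phi=e\,\widetilde{\cL}\psi$, where $\widetilde{\cL}\psi=-\Delta\psi+(b-2De/e)\cdot D\psi$. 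Hence, if one can produce $\psi>0$ in $\Omega\setminus K$, with $\psi\in W^{2,d}_{\mathrm{loc}}(\Omega\setminus K)$ and $\widetilde{\cL}\psi\ge\bar\delta\,\psi$ in $\Omega\setminus K$, then $\phi=e\psi$ is an admissible competitor in~\eqref{eq:6} for the value $\lambda_b(\Omega)+\bar\delta$, and therefore $\lambda_b(\Omega\setminus K)\ge\lambda_b(\Omega)+\bar\delta$. So it suffices to bound from below, uniformly in $K$, the generalized principal eigenvalue of $\widetilde{\cL}$ on $\Omega\setminus K$ with a Dirichlet condition on $\partial K$ and no condition at $\partial\Omega$.

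Probabilistically, $-\widetilde{\cL}$ is the generator of the Doob $h$-transform, with $h=e$, of the diffusion $dX_t=-b(X_t)\,dt+\sqrt2\,dW_t$; this $h$-process is ergodic in $\Omega$ (it never reaches $\partial\Omega$), its invariant density is bounded and is bounded below on every compact subset of $\Omega$, and the number to be estimated is its exponential killing rate when it is stopped upon hitting $K$. I would prove that $\sup_{x\in\Omega}\EE_x[\tau_K]\le C$, where $\tau_K$ is the hitting time of $K$ and $C=C(\Omega,\|b\|_{L^\infty},\delta_0)$; by the Markov property this yields a killing rate at least $(\ln 2)/(2C)$, which is the desired $\bar\delta$. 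For the hitting-time bound I would split $K=K_1\cup K_2$ with $K_1=K\cap\{\mathrm{dist}(\cdot,\partial\Omega)\ge\eta_0\}$: choosing $\eta_0=\eta_0(\Omega,\delta_0)$ so small that $|\{\mathrm{dist}(\cdot,\partial\Omega)<\eta_0\}|<\delta_0/2$ forces $|K_1|\ge\delta_0/2$ with $K_1$ contained in a fixed compact subset of $\Omega$. Then one controls, by a Lyapunov-function argument exploiting the strong inward drift of $-\widetilde{\cL}$ near $\partial\Omega$, the expected time for the $h$-process to reach a fixed interior neighbourhood, and from there uses the uniform interior Gaussian lower bound on the $h$-transition density — itself read off from the Dirichlet heat kernel estimates for $\cL_b$ together with the interior Harnack lower bounds on $e$ and on the adjoint eigenfunction — to obtain a probability, bounded below in terms of $\delta_0$, of hitting $K_1$ within one unit of time.

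The step I expect to be the main obstacle is exactly this \textbf{uniformity in the arbitrary compact set $K$}, compounded by the fact that $\widetilde{\cL}$ has an unbounded drift near $\partial\Omega$: $K$ may be thin, disconnected, or concentrated near $\partial\Omega$, so it need not contain a ball of controlled radius, and one cannot work with $\widetilde{\cL}$ up to $\partial\Omega$. The decomposition $K=K_1\cup K_2$ and the entrance behaviour of the $h$-process are precisely what handle these two difficulties; carrying the heat-kernel and Lyapunov estimates out in full is the content of \cite[Theorem~2.4]{MR1258192}, to which we refer for the remaining details.
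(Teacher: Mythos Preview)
The paper does not prove this theorem: it is stated with the attribution ``Theorem 2.4 in \cite{MR1258192}'' and no proof is given. There is therefore no ``paper's own proof'' to compare your proposal against; the authors simply import the result from Berestycki--Nirenberg--Varadhan.

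Your sketch is a reasonable outline of the argument in \cite{MR1258192}: the ground-state transform $\phi=e_b\psi$ reducing the problem to a killing-rate estimate for the $h$-process, the decomposition of $K$ to force a definite piece into a fixed compact, and the combination of an interior heat-kernel lower bound with control of the entrance time from the boundary layer are indeed the ingredients of that proof. Since you, too, explicitly defer the technical steps to \cite[Theorem~2.4]{MR1258192}, your proposal is in the end the same as the paper's treatment: cite the result. If you want this to stand as an independent proof rather than a sketch, the two places that require genuine work are (i) the quantitative Lyapunov/entrance-time bound for the $h$-process near $\partial\Omega$, where the drift $-2De_b/e_b$ is unbounded, and (ii) the uniform lower bound on the Dirichlet heat kernel on the fixed interior compact, with constants depending only on $\Omega$ and $\|b\|_{L^\infty}$; both are done in \cite{MR1258192} but are not trivial.
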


\subsection{Some spectral properties of the adjoint operator}

From Fredholm alternative, see \cite[Theorem 5.11]{MR1814364}, we know that $\bfS$ and $\bfS^*$ have the same spectra, and that if $\mu \in \CC$,
\begin{displaymath}
  \dim{(\ker( I -\mu \bfS))}=\dim{(\ker( I -\bar \mu \bfS^*))}.
\end{displaymath}
 Therefore $1/\lambda_b$ is the spectral radius of $\bfS^*$, and $\lambda_b$ can be seen as the principal eigenvalue related
 to the dual boundary value problem with a Fokker-Planck equation: there exists a unique $p_b\in H^{1}_0(\Omega)$ such that
 \begin{eqnarray}
\label{eq:8}
  \cL_b^* p_b = \lambda_b p_b \quad \hbox{ in }\Omega,\\
\label{eq:9}
p_b=0\quad \hbox{ on }\partial \Omega,\\
\label{eq:10}
\int_\Omega p_b=1,
 \end{eqnarray}
 where $\cL_b^*$ is the adjoint elliptic operator defined by~\eqref{eq:def_Lb_star}.
Moreover, from 
 Krein-Rutman theorem (see~\cite{MR0038008} or e.g.~\cite[Theorem 1]{MR2205529}   applied to $\bfS$, the eigenvector $p_b$  belongs to $L^2_+(\Omega)$.
\begin{proposition}\label{sec:stab-princ-eigenv}
  The normalized eigenvector $p_b$ characterized by (\ref{eq:8})--(\ref{eq:10}) is positive in $\Omega$ and continuous in $\bar \Omega$.
\end{proposition}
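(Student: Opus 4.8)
The plan is to establish the two claimed properties of $p_b$ — positivity in $\Omega$ and continuity on $\overline\Omega$ — essentially by bootstrapping regularity and then invoking the strong maximum principle together with Hopf's lemma for the adjoint operator $\cL_b^*$. The starting point is that, by the Krein--Rutman / Fredholm discussion preceding the statement, we already know $p_b \in H^1_0(\Omega) \cap L^2_+(\Omega)$ solves $\cL_b^* p_b = \lambda_b p_b$ weakly with $p_b \geq 0$, $\int_\Omega p_b = 1$, so $p_b \not\equiv 0$.

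First I would address regularity. Writing the equation as $-\frac{\sigma^2}{2}\Delta p_b = \lambda_b p_b + \diver(b\, p_b)$, the right-hand side lies in $H^{-1}(\Omega)$ with the $\diver(b\,p_b)$ term controlled because $b \in L^\infty$ and $p_b \in L^2$; but to get continuity one needs more than $H^1$. The cleanest route is to iterate: the equation $\partial_t$-free version is exactly of the form~\eqref{eq:57} (stationary, with $B = b\,p_b \in L^\infty$ once we know $p_b \in L^\infty$), so I would first obtain an $L^\infty$ bound on $p_b$. This can be done either by a Stampacchia truncation / De Giorgi argument applied to $\cL_b^* p_b = \lambda_b p_b$ (the zeroth-order term $\lambda_b p_b$ is harmless), or by quoting the same maximum-norm estimate for divergence-form equations, \cite[Corollary 9.10]{MR1465184}, already used in the proof of Theorem~\ref{thm:existence-minimizer-J}. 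Once $p_b \in L^\infty(\Omega)$, the right-hand side $\lambda_b p_b + \diver(b\,p_b)$ is of the form (bounded function) $+ \diver(L^\infty$ field$)$, and the elliptic analogue of the De Giorgi--Nash--Moser / Hölder estimate for divergence-form equations — the stationary counterpart of \cite[Theorem 6.33]{MR1465184}, or directly \cite[Theorem 8.24]{MR1814364} — gives $p_b \in C^{\alpha}(\overline\Omega)$ for some $\alpha \in (0,1)$, and in particular continuity up to the boundary; together with the boundary condition $p_b = 0$ on $\partial\Omega$ this yields $p_b \in C^0(\overline\Omega)$.

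Next I would prove strict positivity in $\Omega$. Since $\cL_b^* p_b = \lambda_b p_b$ with $\lambda_b \geq 0$ and $p_b \geq 0$, we have $\cL_b^* p_b - \lambda_b p_b = 0$, i.e. $p_b$ is a nonnegative supersolution of $\cL_b^* p_b = 0$; more to the point, rewriting $-\frac{\sigma^2}{2}\Delta p_b - \diver(b\,p_b) - \lambda_b p_b = 0$, the function $p_b$ solves a homogeneous uniformly elliptic equation in divergence form with bounded coefficients and nonpositive zeroth-order coefficient $-\lambda_b$, so the strong maximum principle (e.g. \cite[Theorem 8.19]{MR1814364}) applies: a nonnegative solution that vanishes at an interior point is identically zero. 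As $\int_\Omega p_b = 1$, $p_b \not\equiv 0$, hence $p_b > 0$ everywhere in $\Omega$. (Alternatively one argues exactly as in the proof of Theorem~\ref{thm:existence-minimizer-J} via a Harnack inequality for the adjoint equation.)

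The main obstacle I anticipate is not any single deep theorem but rather verifying that the divergence-form regularity and strong-maximum-principle results are applicable to $\cL_b^*$ in the low-regularity setting $b \in L^\infty(\Omega;\RR^d)$ with the term $\diver(b\,p_b)$ genuinely only in $H^{-1}$ a priori — i.e. making sure the bootstrap from $H^1_0$ to $L^\infty$ to $C^\alpha$ is legitimate and that one is not implicitly using more regularity of $b$ than is available. The zeroth-order term $\lambda_b p_b$ is benign throughout. Once the $L^\infty$ bound is in hand the rest is a routine application of the cited elliptic estimates, mirroring what was already done for the parabolic problem~\eqref{eq:57}. I would present the argument by: (i) $L^\infty$ bound via truncation or \cite[Corollary 9.10]{MR1465184}; (ii) $C^\alpha(\overline\Omega)$ regularity via the elliptic De Giorgi--Nash--Moser estimate for divergence-form right-hand sides; (iii) strict positivity via the strong maximum principle / Harnack inequality for $\cL_b^*$.
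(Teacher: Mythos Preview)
Your argument is correct, and in fact more direct than the paper's. You bootstrap the regularity of the already-known eigenvector $p_b\in H^1_0$ (first $L^\infty$, then $C^\alpha(\overline\Omega)$) and then invoke a strong-maximum-principle / Harnack argument for positivity. The paper instead builds the solution operator $\mathbf U: g\mapsto q$ for $\cL_b^* q=g$, shows it is compact and positive on $C^\alpha_0(\overline\Omega)$ via the same De~Giorgi--Nash--Moser and weak Harnack estimates (together with the fact that property \textbf{(M)} transfers from $\cL_b$ to $\cL_b^*$), re-applies Krein--Rutman on that H\"older space, and then identifies the resulting H\"older-continuous positive eigenvector with $p_b$; positivity in $\Omega$ is obtained at the end from the weak Harnack inequality. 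Your route avoids this second Krein--Rutman step entirely. One small point: the reference \cite[Corollary~9.10]{MR1465184} you cite for the $L^\infty$ bound is parabolic; in this stationary setting the appropriate references are the elliptic estimates in \cite[Theorems~8.15--8.17]{MR1814364}, and for positivity it is cleanest to observe (as the paper does) that $p_b$ is a nonnegative supersolution of $\cL_b^* q=0$, so that the weak Harnack inequality \cite[Theorem~8.18]{MR1814364} applies without any sign condition on the zeroth-order term.
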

\begin{proof}
For $b\in L^\infty(\Omega; \RR^d)$, we now consider the Dirichlet problem related to the adjoint elliptic operator $\cL_b^*$
\begin{eqnarray}
\label{eq:59}
  \cL_b^* q = g \quad \hbox{ in }\Omega,\\
\label{eq:60}
q=0\quad \hbox{ on }\partial \Omega.
\end{eqnarray}
We say that $\cL_b$ (respectively $\cL_b^*$) satisfies  property \textbf{(M)} if for every $v\in H^1_0(\Omega)$, the fact that $\langle \cL_b v, w\rangle$ is $\ge 0$   (respectively  $\langle  \cL_b^* v, w\rangle \ge 0$)
 for all nonnegative functions $w\in H^1_0(\Omega)$, implies that $v$ is nonnegative in $\Omega$. Property \textbf{(M)} for $\cL_b$ is a particular version of the weak maximum principle, so we have seen in
paragraph \ref{sec:princ-eigenv-relat-4} that it is true. On the other hand, since $\Omega$ has a smooth boundary,  using Theorem 2.2.1. in \cite[page 60]{MR3443169}, we see  that property \textbf{(M)} is true for $\cL_b^*$ if and only if it is true for $\cL_b$.
Hence, $\cL_b^*$ enjoys property \textbf{(M)}. Then, from Proposition 2.1.4. in  \cite[page 57]{MR3443169}, we know that
 for any $g\in H^{-1}(\Omega)$, there exists a unique weak solution of the Dirichlet problem (\ref{eq:59})--(\ref{eq:60}), i.e. a unique function $q\in  H^1_0(\Omega)$  such that for all $w\in H^1_0(\Omega)$,
\begin{displaymath}
\int_\Omega Dq\cdot Dw +  q b\cdot Dw  = \langle g, w\rangle.
\end{displaymath}
         
Let us recall H{\"o}lder regularity results for the Dirichlet problem (\ref{eq:59})--(\ref{eq:60}). First, using Theorem 8.15 in \cite{MR1814364}, we know that  for $s>d/2$, there exists a constant $C$ which depends on $d$, $\Omega$, $\|b\|_{\infty}$ and $s$ such that  if $g\in L^s(\Omega)$, then $q\in L^\infty(\Omega)$ and
\begin{displaymath}
  \|q\|_{L^\infty(\Omega)}\le C \|g\|_{L^s(\Omega)}.
\end{displaymath}
Next, from Theorem 8.29 in  \cite{MR1814364} and the latter $L^\infty$ bound, we see that
if $g\in L^s(\Omega)$ for $s>d/2$, then there exists $\alpha\in (0,1)$ such that $p\in C^\alpha(\bar \Omega)$ and
\begin{displaymath}
\|q\|_{C^\alpha(\bar \Omega)}\le C \|g\|_{L^s(\Omega)},
\end{displaymath}
for a given constant $C$. 

Let us  now focus on the case when   for $s>d/2$, the function $g$ belongs to $ L^s(\Omega)$  and is nonnegative.
 The solution $q$ of (\ref{eq:59})--(\ref{eq:60}) is  nonnegative in $\Omega$ by property \textbf{(M)}, and continuous. It is a weak supersolution of $\cL_b^* q=0$ in $\Omega$. Hence, we can apply the weak Harnack inequality stated in Theorem 8.18 in  \cite{MR1814364}: for $1\le t< d/(d-2)$, for all $y\in \Omega$ and $R>0$ such that $B_{4R}(y)\subset \Omega$, 
 \begin{displaymath}
   R^{-d/t} \|q\|_{L^t(B_{2R}(y))}\le C \inf_{B_R(y) } q,
 \end{displaymath}
for a constant $C$ which depends on $d$, $\|b\|_{\infty}$, $R$, $s$, and $t$.
This implies that if $q$ vanishes in $\Omega$, then $q$ must be identically $0$ in $\Omega$. Hence, if $g$ is non identically $0$, $q$ must be positive in $\Omega$.

 Fix one of the H{\"o}lder exponents $\alpha$ obtained above  and define the Banach space $\GG=C^\alpha_0(\Omega)$ 
and the cone $\GG_+=\{g\in \GG; g\ge 0\in \Omega\}$. The linear operator $\bfU: g\mapsto q$ is compact from $\GG$ to $\GG$.
Krein-Rutman theorem (the first version stated in paragraph \ref{sec:princ-eigenv-relat-4}) can be applied to $\bfU$, which yields that there exists an eigenvector of $\bfU$ in $\GG_+$ associated with the spectral radius of $\bfU$. Since the eigenvectors of $\bfU$ are eigenvectors of $\bfS$, the latter must coincide with $p_b$. Hence $p_b\in C(\bar \Omega)$.

Finally, we can apply Harnack inequality to $p_b$: $p_b$ is positive in $\Omega$.

\end{proof}

\subsection{A stability result}
\label{sec:anoth-stab-result}
\begin{lemma}\label{sec:stability-1}
  Let the sequence $b_n$ tend to $b$ in $L^\infty(\Omega;\RR^d)$ weak *. Let $\lambda_n$ be the principal eigenvalue of $\cL_n$ (related to $b_n$). Let $p_{b_n}$ be the principal eigenvector of $\cL _n^*$ (non negative and normalized in $L^1$ norm). 
The following convergence results hold: $\lambda_n$ tends to $\lambda_b$ and $p_{b_n}$ tends to $p_b$  in $L^2(\Omega)$ strongly and 
in $H^1 _0(\Omega)$ weakly as $n\to \infty$.
\end{lemma}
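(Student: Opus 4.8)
The plan is to derive uniform a priori bounds on the eigenpairs $(\lambda_n, p_{b_n})$, extract convergent subsequences, pass to the limit in the weak formulation, and identify the limit by uniqueness of the principal eigenpair (Proposition~\ref{sec:stab-princ-eigenv} and Theorem~\ref{sec:princ-eigenv-relat-5}). Concretely: since $b_n \rightharpoonup b$ in $L^\infty$ weak *, the sequence $\|b_n\|_{L^\infty(\Omega;\RR^d)}$ is bounded, say by $M$. By Lemma~\ref{sec:princ-eigenv-relat-1} (applied to any ball contained in $\Omega$), the principal eigenvalues $\lambda_n$ are bounded: $0 < \lambda_n \le C(d,M)/R^2$. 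So $\lambda_n$ lies in a fixed compact interval of $(0,+\infty)$, and up to a subsequence $\lambda_n \to \lambda_* \in (0,+\infty)$ — one should check the limit is bounded away from $0$, which follows from the lower bound $\lambda_n \ge \lambda_{b_n}(\Omega) \ge$ a positive constant uniform on $\cB_M$ (e.g. via Proposition~\ref{sec:princ-eigenv-relat-2}, local Lipschitz continuity on $\cB_M$, applied to strong limits, or directly by the positivity statement combined with Lemma~\ref{sec:princ-eigenv-relat-1}-type arguments on a slightly smaller domain).

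Next I would get an $H^1_0$ bound on $p_{b_n}$. Testing the weak form of~\eqref{eq:8} with $p_{b_n}$ gives
\[
  \int_\Omega |Dp_{b_n}|^2 + \int_\Omega p_{b_n}\, b_n \cdot D p_{b_n} = \lambda_n \int_\Omega p_{b_n}^2,
\]
and, since $\int_\Omega p_{b_n} = 1$ with $p_{b_n} \ge 0$, one first needs an $L^2$ (indeed $L^\infty$) bound on $p_{b_n}$. This is available from the elliptic regularity machinery recalled just before: writing the equation as $-\Delta p_{b_n} = \diver(b_n p_{b_n}) + \lambda_n p_{b_n}$, the $L^\infty$ estimate of Theorem 8.15 in~\cite{MR1814364} (or the divergence-form maximum-norm estimate~\cite[Cor.~9.10]{MR1465184}), combined with a bootstrap starting from $\|p_{b_n}\|_{L^1}=1$, yields $\|p_{b_n}\|_{L^\infty(\Omega)} \le C(d,\Omega,M)$; then the $C^\alpha(\overline\Omega)$ bound of Theorem 8.29 in~\cite{MR1814364} follows, uniformly in $n$. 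Plugging the $L^\infty$ bound and Young's inequality into the energy identity above absorbs the drift term and gives $\|p_{b_n}\|_{H^1_0(\Omega)} \le C$. Hence, up to a further subsequence, $p_{b_n} \rightharpoonup p_*$ in $H^1_0(\Omega)$, $p_{b_n} \to p_*$ strongly in $L^2(\Omega)$ (Rellich) and uniformly on $\overline\Omega$ (Arzelà–Ascoli, from the uniform $C^\alpha$ bound), so $p_* \ge 0$, $\int_\Omega p_* = 1$ (in particular $p_* \not\equiv 0$).

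Now pass to the limit in the weak formulation: for fixed $w \in H^1_0(\Omega)$,
\[
  \int_\Omega D p_{b_n}\cdot Dw + \int_\Omega p_{b_n}\, b_n \cdot Dw = \lambda_n \int_\Omega p_{b_n} w.
\]
The first term converges by weak $H^1_0$ convergence; the right-hand side converges since $\lambda_n \to \lambda_*$ and $p_{b_n} \to p_*$ in $L^2$; for the drift term, $p_{b_n} Dw \to p_* Dw$ strongly in $L^2(\Omega;\RR^d)$ (product of an $L^2$-strongly convergent and a fixed $L^2$ function, using the $L^\infty$ bound for equi-integrability) while $b_n \rightharpoonup b$ weak *, so the pairing converges to $\int_\Omega p_* \, b \cdot Dw$. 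Thus $(\lambda_*, p_*)$ solves~\eqref{eq:8}--\eqref{eq:10} with the vector field $b$, and $p_* \ge 0$, $p_* \not\equiv 0$. By the uniqueness part of the principal eigenpair characterization (Theorem~\ref{sec:princ-eigenv-relat-5}(2)–(3) transferred to the adjoint via the Fredholm/Krein–Rutman discussion, i.e. Proposition~\ref{sec:stab-princ-eigenv}: the only eigenvalue admitting a nonnegative eigenvector is $\lambda_b$, with one-dimensional eigenspace), we conclude $\lambda_* = \lambda_b$ and $p_* = p_b$. Since every subsequence has a further subsequence converging to the same limit, the full sequences converge: $\lambda_n \to \lambda_b$, and $p_{b_n} \to p_b$ strongly in $L^2(\Omega)$ and weakly in $H^1_0(\Omega)$.

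\textbf{Main obstacle.} The delicate point is the drift term $\int_\Omega p_{b_n} b_n \cdot Dw$: one has only weak * convergence of $b_n$, so genuine strong compactness of $p_{b_n}$ (not just weak $H^1_0$) is essential, which is why the uniform $L^\infty$/$C^\alpha$ estimates from elliptic regularity must be established first — this is the real content of the argument. A secondary subtlety is ruling out $\lambda_* = 0$ and $p_* \equiv 0$ in the limit, handled by the uniform lower bound on $\lambda_n$ and the $L^1$-normalization surviving the uniform convergence.
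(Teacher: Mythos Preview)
Your argument is correct, but the paper takes a shorter route by first renormalizing in $L^2$: it sets $p_n = p_{b_n}/\|p_{b_n}\|_{L^2}$, so that the energy identity obtained by testing~\eqref{eq:12} with $p_n$ immediately yields a uniform $H^1_0$ bound (since $\|p_n\|_{L^2}=1$ and $\lambda_n$ is bounded by Lemma~\ref{sec:princ-eigenv-relat-1}), with no need for any $L^1\to L^\infty$ bootstrap or $C^\alpha$ estimates. After extracting a subsequence and passing to the limit, the paper recovers the $L^1$-normalized eigenvector simply by noting that $\int_\Omega p_n \to \int_\Omega q \neq 0$ (since $\int_\Omega q^2=1$). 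For the identification of the limiting eigenvalue, the paper tests against the \emph{primal} principal eigenvector $e_b$: if $\mu\neq\lambda_b$ then $(\mu-\lambda_b)\int_\Omega q\,e_b=0$, impossible since $q\ge 0$, $q\not\equiv0$ and $e_b>0$ in $\Omega$; this replaces your appeal to the Krein--Rutman uniqueness of nonnegative eigenvectors. Two minor points on your version: (i) your worry about a uniform positive lower bound on $\lambda_n$ is unnecessary --- if $\lambda_*=0$ then $p_*$ solves $\cL_b^* p_*=0$ with Dirichlet data, hence $p_*\equiv 0$ by property \textbf{(M)}, contradicting $\int_\Omega p_*=1$; (ii) the $L^1\to L^\infty$ bootstrap you sketch needs a bit more than Theorem~8.15 in \cite{MR1814364} alone (which controls $L^\infty$ in terms of $L^2$), e.g.\ the De Giorgi--Nash--Moser local boundedness valid for any $L^q$ norm with $q>0$, or interior Harnack combined with the boundary condition --- this is standard but worth stating precisely. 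Your approach buys a direct treatment of the $L^1$-normalized object and the explicit discussion of the drift term (which the paper leaves implicit); the paper's $L^2$ trick buys a much shorter a~priori estimate.
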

\begin{proof}
We consider  $p_n$ the eigenvector of $\cL _n^*$ related to $\lambda_n$ and  normalized in $L^2 (\Omega)$  norm: it solves
 \begin{eqnarray}
\label{eq:12}
  -\Delta p_n - \diver(  p_n b_n) = \lambda_n p_n \quad \hbox{ in }\Omega,\\
\label{eq:13}
p_n=0\quad \hbox{ on }\partial \Omega,\\ \label{eq:14}
\int_\Omega p_n^2=1.
 \end{eqnarray}
We know that $p_n> 0$ in $\Omega$. We know that $p_{b_n}= p_n/\int_\Omega p_n$.
From Lemma \ref{sec:princ-eigenv-relat-1}, we know that $0<\lambda_n$ is bounded from above by a constant independent of $n$.
From this observation, it is easy to see that $p_n$ is bounded in $H^1_0(\Omega)$ by a constant independent of $n$.
Extracting a subsequence, we may assume that $p_n$ tends to $q$ weakly in $H^1_0(\Omega)$, strongly in $L^2(\Omega)$  and that $\lambda_n$ tends to $\mu$ in $\RR$.

 We may pass to the limit and we get that
 \begin{eqnarray*}
  -\Delta q - \diver(  q b) = \mu q \quad \hbox{ in }\Omega,\\
q=0\quad \hbox{ on }\partial \Omega,\\ 
\int_\Omega q^2 =1,\\
 q\ge 0,\quad \hbox{in }\Omega.
 \end{eqnarray*}
We also see that $\int_\Omega p_n$ tends to $\int_\Omega q$. The latter cannot be zero, otherwise $q$ would be identically $0$ in contradiction with $\int_\Omega q^2 =1$. Hence $\int_\Omega p_n$ is bounded from below by a positive constant. Therefore 
$p_{b_n}= p_n/\int p_n \to  \tilde q= q/\int_\Omega q$  weakly in $H^1_0(\Omega)$ and strongly in $L^2(\Omega)$.
\\
 If $\mu\not=\lambda_b$, then, using $e_b$ as a test function in the latter boundary value problem, we get $\int_\Omega q e_b =0$ which is not possible since $0\not\equiv q\ge 0$ and $e_b>0$.
Therefore $\mu=\lambda_b$ and $\tilde q=p_b$. 
   Since the two cluster points are unique,  the whole sequences $(\lambda_n)$ and $(p_{b_n})$ tend to $\lambda_b$ and $p_b$ respectively.
\end{proof}

\subsection{Dependence of $p_b$ with respect to $b$}
\begin{lemma}\label{sec:stability-2}
The map $b\mapsto p_b$ is locally Lipschitz continuous from $L^\infty(\Omega; \RR^d)$ to $H^1_0(\Omega)$.
\end{lemma}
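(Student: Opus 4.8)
The plan is to derive the Lipschitz estimate from a \emph{uniform} (over bounded sets of drifts) invertibility bound for the linearization of the eigenvalue problem \eqref{eq:8}--\eqref{eq:10} about a fixed drift, and to obtain that uniform bound by a compactness argument anchored in the stability Lemma~\ref{sec:stability-1}. Fix $M>0$; since every bounded subset of $L^\infty(\Omega;\RR^d)$ lies in some $\cB_M$, it suffices to find $C_M$ depending only on $\Omega$ and $M$ with $\|p_{b_1}-p_{b_2}\|_{H^1_0(\Omega)}\le C_M\|b_1-b_2\|_{L^\infty(\Omega;\RR^d)}$ for all $b_1,b_2\in\cB_M$; I equip $H^1_0(\Omega)$ with the norm $\|Dw\|_{L^2(\Omega)}$ and use $\cL_b^*w=-\Delta w-\diver(bw)$ as in \eqref{eq:12}.

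\emph{Step 1 (uniform a priori bound).} I first record that $K_M:=\sup_{b\in\cB_M}\|p_b\|_{L^2(\Omega)}<\infty$. Indeed, if $b_n\in\cB_M$ had $\|p_{b_n}\|_{L^2}\to\infty$, then, extracting a weak-$*$ limit $b_n\rightharpoonup b$ in $L^\infty$ (bounded subsets of $L^\infty(\Omega;\RR^d)$ being weak-$*$ sequentially compact), Lemma~\ref{sec:stability-1} would give $p_{b_n}\to p_b$ in $L^2(\Omega)$, hence $\|p_{b_n}\|_{L^2}\to\|p_b\|_{L^2}<\infty$, a contradiction.

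\emph{Step 2 (uniform stability of the linearized operator --- the crux).} Let $T_b:=\cL_b^*-\lambda_bI:H^1_0(\Omega)\to H^{-1}(\Omega)$ and $V:=\{w\in H^1_0(\Omega):\int_\Omega w=0\}$. The claim is that there is $C_M$, depending only on $\Omega$ and $M$, with
\begin{equation*}
\|Dw\|_{L^2(\Omega)}\le C_M\,\|T_bw\|_{H^{-1}(\Omega)}\qquad\text{for every }b\in\cB_M\text{ and every }w\in V.
\end{equation*}
The point is that $T_b$ is \emph{not} invertible on $H^1_0(\Omega)$ --- $\lambda_b$ is precisely its principal eigenvalue --- but its kernel is the one-dimensional principal eigenspace $\RR p_b$ (cf. \eqref{eq:8}--\eqref{eq:10} and Theorem~\ref{sec:princ-eigenv-relat-5}), which meets $V$ only at $0$ because $\int_\Omega p_b=1$; this is why the constraint $\int_\Omega w=0$ is imposed, and it is also the one satisfied by $p_{b_1}-p_{b_2}$. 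To prove the estimate I argue by contradiction: suppose there are $b^{(n)}\in\cB_M$ and $w_n\in V$ with $\|Dw_n\|_{L^2}=1$ and $\|T_{b^{(n)}}w_n\|_{H^{-1}}\to0$. Up to subsequences, $b^{(n)}\rightharpoonup b^*$ weak-$*$ in $L^\infty$, $w_n\rightharpoonup w^*$ in $H^1_0(\Omega)$ and $w_n\to w^*$ in $L^2(\Omega)$ (Rellich). Passing to the limit in the weak formulation of $T_{b^{(n)}}w_n=0$ --- using $\lambda_{b^{(n)}}\to\lambda_{b^*}$ from Lemma~\ref{sec:stability-1}, and that $b^{(n)}w_n$ converges weakly in $L^2$, being the product of a weak-$*$ convergent sequence in $L^\infty$ by a strongly $L^2$-convergent one --- yields $T_{b^*}w^*=0$, so $w^*\in\RR p_{b^*}$; since $\int_\Omega w_n=0$ passes to the limit, $w^*\in V$, hence $w^*=0$. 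Finally, testing the identity $-\Delta w_n=T_{b^{(n)}}w_n+\diver(b^{(n)}w_n)+\lambda_{b^{(n)}}w_n$ against $w_n$ gives
\begin{equation*}
1=\|Dw_n\|_{L^2}^2=\langle T_{b^{(n)}}w_n,w_n\rangle-\int_\Omega w_n\,b^{(n)}\!\cdot Dw_n+\lambda_{b^{(n)}}\|w_n\|_{L^2}^2,
\end{equation*}
whose right-hand side tends to $0$ (the first term by $\|T_{b^{(n)}}w_n\|_{H^{-1}}\to0$ and $\|Dw_n\|_{L^2}=1$, the second by $\|b^{(n)}\|_{L^\infty}\le M$ and $\|w_n\|_{L^2}\to\|w^*\|_{L^2}=0$, the third by $\|w_n\|_{L^2}\to0$ together with the bound on $\lambda_{b^{(n)}}$ from Lemma~\ref{sec:princ-eigenv-relat-1}), a contradiction. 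This step is the main obstacle: $T_b$ sits exactly at the principal eigenvalue, so invertibility must be handled by hand, and, more delicately, the constant has to be made uniform over $\cB_M$ --- it is the compactness furnished by Lemma~\ref{sec:stability-1} that closes the argument.

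\emph{Step 3 (conclusion).} For $b_1,b_2\in\cB_M$, subtract the eigenvalue equations \eqref{eq:8} for $p_{b_1}$ and $p_{b_2}$: the function $w:=p_{b_1}-p_{b_2}$ belongs to $V$ (by the normalizations \eqref{eq:10}) and solves
\begin{equation*}
T_{b_1}w=\cL_{b_1}^*w-\lambda_{b_1}w=\diver\!\big((b_1-b_2)\,p_{b_2}\big)+(\lambda_{b_1}-\lambda_{b_2})\,p_{b_2}.
\end{equation*}
Hence $\|T_{b_1}w\|_{H^{-1}}\le C(\Omega)\big(\|b_1-b_2\|_{L^\infty}+|\lambda_{b_1}-\lambda_{b_2}|\big)\|p_{b_2}\|_{L^2}$, and by Step 1 and Proposition~\ref{sec:princ-eigenv-relat-2} (Lipschitz dependence of $\lambda_b$, with constant $\Lambda_M$ on $\cB_M$) the right-hand side is at most $C(\Omega)(1+\Lambda_M)K_M\|b_1-b_2\|_{L^\infty}$. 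Combining with the estimate of Step 2 gives $\|p_{b_1}-p_{b_2}\|_{H^1_0(\Omega)}\le C_M\|b_1-b_2\|_{L^\infty(\Omega;\RR^d)}$ with $C_M$ depending only on $\Omega$ and $M$, which is the asserted local Lipschitz continuity.
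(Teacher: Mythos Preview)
Your proof is correct and follows the same overall strategy as the paper: subtract the two eigenvalue equations to obtain an equation for $w=p_{b_1}-p_{b_2}$, observe that $w$ lies in the mean-zero hyperplane $V$ thanks to the normalizations~\eqref{eq:10}, and then bound $\|w\|_{H^1_0}$ by the $H^{-1}$-norm of the right-hand side (which is controlled by $\|b_1-b_2\|_{L^\infty}$ via Step~1 and Proposition~\ref{sec:princ-eigenv-relat-2}).

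The difference lies in how the invertibility estimate on $V$ is obtained. The paper fixes $b_1$ and invokes Fredholm theory: since the right-hand side is orthogonal to $e_{b_1}$ (this is checked directly, see~\eqref{eq:18}), there is a solution in $H^1_0(\Omega)$, unique up to $\RR p_{b_1}$, and the bounded inverse $A$ on the quotient yields a constant $C$ for the solution with zero mean. The paper does not spell out why this constant is uniform over $b_1\in\cB_M$. You instead prove the uniform estimate $\|Dw\|_{L^2}\le C_M\|T_bw\|_{H^{-1}}$ for all $b\in\cB_M$ and $w\in V$ directly, by a contradiction/compactness argument that uses the stability Lemma~\ref{sec:stability-1} to pass to the limit and the simplicity of the principal eigenvalue to force $w^*=0$. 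Your route is slightly more self-contained and makes the uniformity in $b$ explicit; the paper's route is shorter once one accepts Fredholm's alternative but leaves the uniformity of $\|A\|$ over $\cB_M$ implicit. Either way the conclusion is the same.
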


\begin{proof}
First, using a similar argument as in the proof of Lemma~\ref{sec:stability-1}, we see that
the map $b\mapsto p_b$ is locally bounded from $L^\infty(\Omega; \RR^d)$ to $H^1_0(\Omega)$.

Take $M>0$ and two vector fields $b_1$ and $b_2$ 
such that $\|b_1\|_{L^\infty(\Omega;\RR^d)}\le M$ and $\|b_2\|_{L^\infty(\Omega;\RR^d)}\le M$. 
Let us set $\delta b=b_1-b_2$,  $\delta p=p_{b_1}-p_{b_2}$, $\delta \lambda= \lambda_{b_1}  -\lambda_{b_2}$:
   \begin{eqnarray}
\label{eq:15}
  -\Delta \delta p - \diver(  b_1 \delta p) = \lambda_{b_1} \delta p     + \diver(  p_{b_2} \delta b)+\delta \lambda p_{b_2} 
    \quad \hbox{ in }\Omega,\\
\label{eq:16}
\delta p=0\quad \hbox{ on }\partial \Omega,\\
\label{eq:17}
\int_\Omega \delta p=0.
 \end{eqnarray}
Testing (\ref{eq:15}) by $e_{b_1}$, we see that
\begin{equation}
  \label{eq:18}
   \langle \diver(  p_{b_2} \delta b)+\delta \lambda p_{b_2} , e_{b_1} \rangle_{H^{-1}(\Omega), H^{1}_0(\Omega)} = -\int_\Omega     p_{b_2} \delta b  \cdot D e_{b_1} dx + \delta \lambda \int_\Omega 
p_{b_2} e_{b_1} dx =0.
\end{equation}
On the other hand, from  Fredholm's theory, we know that for all $g\in  H^{-1}(\Omega)$ such that $\langle g, e_{b_1}\rangle _{H^{-1}(\Omega), H^{1}_0(\Omega)}=0$,   there exists $z\in H^1_0(\Omega)$ a weak solution of  
\begin{equation}
\label{eq:19}
  -\Delta z - \diver(  b_1 z) -\lambda_{b_1} z   = g
    \quad \hbox{ in }\Omega,
\end{equation}
and that  $z$ is unique up to the addition of a multiple of $p_{b_1}$. Moreover, the operator $ A: g\mapsto z$ is bounded from $\{g\in  H^{-1}(\Omega),\;\langle g, e_{b_1}\rangle _{H^{-1}(\Omega), H^{1}_0(\Omega)}=0 \}$ to $H^{1}_0(\Omega) /  (\RR p_{b_1})$.
Therefore, for all $g\in  H^{-1}(\Omega)$ such that $\langle g, e_{b_1}\rangle _{H^{-1}(\Omega), H^{1}_0(\Omega)}=0$, there exists a weak solution $\tilde z \in H^{1}_0(\Omega)$ of (\ref{eq:19}) such that $\|\tilde z\|_{H^{1}_0(\Omega)}\le 2 \|A\|\, \|g\|_{ H^{-1}(\Omega)}$.
Then  $q\equiv \tilde z - (\int_\Omega \tilde z) p_{b_1}\in H^1_0(\Omega)$ is the unique weak solution of (\ref{eq:19}) such that $\int_\Omega q=0$. We deduce that $\|q\|_{H_0^1 (\Omega)}\le C  \|g\|_{ H^{-1}(\Omega)}$ for a positive constant $C$ (independent of $g$). 

In what follows, the positive constant $C$ (independent of $g$) will vary from one line to the other. From (\ref{eq:15})--(\ref{eq:17}),  (\ref{eq:18}) and the previous arguments, we deduce  that
\begin{displaymath}
  \begin{split}
\|\delta p\|_{H_0^1 (\Omega)}& \le C  \|\diver(  p_{b_2} \delta b)+\delta \lambda p_{b_2}  \|_{H^{-1}(\Omega)}      \\
& \le C \left(        \| p_{b_2} \delta b  \|_{L^2 (\Omega; \RR^d)} + |\delta \lambda| \|p_{b_2}\|_{L^2 (\Omega)} \right)\\
& \le C \left(   \|  \delta b  \|_{L^\infty (\Omega; \RR^d)} + |\delta \lambda|  \right)\\
&\le C \|  \delta b  \|_{L^\infty (\Omega; \RR^d)}.
  \end{split}
\end{displaymath}
The third line is obtained by using the bound on $\|p_{b_2}\|_{L^2(\Omega)}$ discussed in the very first lines of the proof. 
The last line is obtained by using Proposition \ref{sec:princ-eigenv-relat}.
\end{proof}

\section{Long time behavior: an optimal control problem driven by a principal eigenvalue problem}
\label{sec:long-time-behavior}

\subsection{Definition of the problem}

Consider the following problem 
\begin{equation}
\label{eq:20}
\inf_{\tilde b \in L^\infty(\Omega;\RR^d); \|\tilde b \|_{\infty}\le M} \widetilde J(\tilde b )
\end{equation}
where
\begin{equation}
  \label{eq:56}
 \widetilde J(\tilde b )= \int_{\Omega} p_{\tilde b}(x)L(x,\tilde b (x))  dx + \epsilon \lambda_{\tilde b } + \Phi[p_{\tilde b }],
\end{equation}
and
\begin{eqnarray}
\label{eq:21}
	\lambda_{\tilde b} \hbox{ is the principal eigenvalue related to ~(\ref{eq:3})--(\ref{eq:5})},\\
\label{eq:22}
p_{\tilde b } \hbox{ is the unique solution of (\ref{eq:8})--(\ref{eq:10})}.
\end{eqnarray}
It will be convenient to use the following notations for the set of controls:
$
	 \widetilde \cB = L^\infty(\RR^{d}; \RR^d),
$
and for every $M>0$, $ \widetilde \cB_{M} = \{\tilde b  \in L^\infty(\RR^{d}; \RR^d) \,:\, \|\tilde b \|_{L^\infty} \leq M   \}$ is the subset of controls uniformly bounded by $M$.

\subsection{Existence of an optimal solution}
\label{sec:existence-an-optimal}

\begin{lemma}
  For every $M>0$, there exists a minimizer of \eqref{eq:20} subject to \eqref{eq:21}--\eqref{eq:22} over $\widetilde \cB_M$.
\end{lemma}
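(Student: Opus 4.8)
The plan is to use the direct method of the calculus of variations, exactly as in the proof of Theorem~\ref{thm:existence-minimizer-J}, but now relying on the stability results of Section~\ref{sec:facts-about-princ} in place of the parabolic estimates. First I would fix $M>0$ and observe that $\widetilde J$ is bounded from below on $\widetilde\cB_M$: indeed $p_{\tilde b}\ge 0$ with $\int_\Omega p_{\tilde b}=1$, the map $(x,b)\mapsto L(x,b)$ is continuous hence bounded on the compact set $\overline\Omega\times\overline{B}(0,M)$, the eigenvalue $\lambda_{\tilde b}$ is positive and bounded above by Lemma~\ref{sec:princ-eigenv-relat-1} (uniformly in $\tilde b\in\widetilde\cB_M$), and $\Phi$ is bounded from below by assumption. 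Thus $\inf_{\widetilde\cB_M}\widetilde J$ is a finite real number, and we may pick a minimizing sequence $(\tilde b_n)_n\subset\widetilde\cB_M$.

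Next I would extract convergence. Since $\|\tilde b_n\|_{L^\infty}\le M$, the Banach--Alaoglu theorem gives a subsequence (not relabelled) with $\tilde b_n\rightharpoonup \tilde b$ in $L^\infty(\Omega;\RR^d)$ weak~$*$, and clearly $\|\tilde b\|_{L^\infty}\le M$ so $\tilde b\in\widetilde\cB_M$. Now invoke Lemma~\ref{sec:stability-1}: along this subsequence $\lambda_{\tilde b_n}\to\lambda_{\tilde b}$ in $\RR$ and $p_{\tilde b_n}\to p_{\tilde b}$ strongly in $L^2(\Omega)$ (and weakly in $H^1_0(\Omega)$), where $p_{\tilde b_n}$, $p_{\tilde b}$ are the normalized principal eigenvectors of $\cL^*_{\tilde b_n}$, $\cL^*_{\tilde b}$. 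The $L^2$ strong convergence together with the Fréchet differentiability (hence continuity) of $\Phi$ on $L^2(\Omega)$ yields $\Phi[p_{\tilde b_n}]\to\Phi[p_{\tilde b}]$, and $\epsilon\lambda_{\tilde b_n}\to\epsilon\lambda_{\tilde b}$.

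The remaining — and main — obstacle is the lower semicontinuity of the first term $\tilde b\mapsto\int_\Omega p_{\tilde b}(x)L(x,\tilde b(x))\,dx$, because $\tilde b_n$ converges only weak~$*$ while $L(x,\cdot)$ is merely convex, not linear, so one cannot pass to the limit directly. The way around this is to exploit convexity: for each $n$, by strict convexity and $C^1$-regularity of $L(x,\cdot)$, $L(x,\tilde b_n)\ge L(x,\tilde b)+L_b(x,\tilde b)\cdot(\tilde b_n-\tilde b)$ pointwise; multiply by $p_{\tilde b_n}\ge 0$ and integrate:
\begin{equation*}
\int_\Omega p_{\tilde b_n}L(x,\tilde b_n)\,dx \;\ge\; \int_\Omega p_{\tilde b_n}L(x,\tilde b)\,dx \;+\; \int_\Omega p_{\tilde b_n}\,L_b(x,\tilde b)\cdot(\tilde b_n-\tilde b)\,dx.
\end{equation*}
For the first term on the right, $L(x,\tilde b)\in L^\infty(\Omega)\subset L^2(\Omega)$ and $p_{\tilde b_n}\to p_{\tilde b}$ strongly in $L^2$, so it converges to $\int_\Omega p_{\tilde b}L(x,\tilde b)\,dx$. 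For the second term, write it as $\int_\Omega p_{\tilde b}L_b(x,\tilde b)\cdot(\tilde b_n-\tilde b)\,dx + \int_\Omega (p_{\tilde b_n}-p_{\tilde b})L_b(x,\tilde b)\cdot(\tilde b_n-\tilde b)\,dx$; the first of these tends to $0$ because $p_{\tilde b}L_b(x,\tilde b)\in L^1\cap L^\infty$ (hence in $L^1$) tests against $\tilde b_n-\tilde b\rightharpoonup 0$ weak~$*$, and the second tends to $0$ because $\|p_{\tilde b_n}-p_{\tilde b}\|_{L^2}\to 0$ while $L_b(x,\tilde b)\cdot(\tilde b_n-\tilde b)$ is bounded in $L^2$ (indeed in $L^\infty$, by $2M\|L_b(\cdot,\tilde b)\|_{L^\infty}$). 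Hence $\liminf_n\int_\Omega p_{\tilde b_n}L(x,\tilde b_n)\,dx\ge\int_\Omega p_{\tilde b}L(x,\tilde b)\,dx$. Combining the three contributions, $\widetilde J(\tilde b)\le\liminf_n\widetilde J(\tilde b_n)=\inf_{\widetilde\cB_M}\widetilde J$, so $\tilde b$ is a minimizer. (One subtlety to check: the boundedness of $L_b(\cdot,\tilde b)$ used above — since $\tilde b\in L^\infty$ takes values in $\overline B(0,M)$ and $L_b$ is continuous on $\overline\Omega\times\overline B(0,M)$, $L_b(\cdot,\tilde b(\cdot))\in L^\infty(\Omega)$, which suffices.)
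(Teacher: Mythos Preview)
Your proof is correct and follows essentially the same approach as the paper's: take a minimizing sequence in $\widetilde\cB_M$, extract a weak-$*$ convergent subsequence, invoke Lemma~\ref{sec:stability-1} to get convergence of $(\lambda_{\tilde b_n},p_{\tilde b_n})$, and use convexity of $L(x,\cdot)$ together with the strong $L^2$ convergence of $p_{\tilde b_n}$ for lower semicontinuity of the first term. The paper simply asserts the lower semicontinuity of $\int_\Omega p_{\tilde b_n}L(x,\tilde b_n)\,dx$ as a known fact, whereas you spell it out via the subgradient inequality and the splitting argument; you also add the preliminary check that $\widetilde J$ is bounded below, which the paper omits.
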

\begin{proof}
  
We consider a minimizing sequence $(\tilde b _m)_{m>0}$, $\|\tilde b _m\|_{\infty}\le M$ and $\cL_m v = -\Delta v+ \tilde b _m \cdot Dv$. Let $\lambda_{\tilde b _m}$ 
 be the principal eigenvalue of $\cL_m$ (with Dirichlet conditions)
and let $p_{\tilde b _m}$ be   the unique solution of (\ref{eq:8})--(\ref{eq:10}) where $\tilde b $ is replaced by  $\tilde b _m$. 

We can extract a subsequence, still indexed by $m$, such that $\tilde b _m \to \tilde b $ in $L^\infty(\Omega;\RR^d)$ weakly~*.
Then from Lemma \ref{sec:stability-1}, $\lambda_{\tilde b _m}$ tends to $\lambda_{\tilde b }>0$ and $p_{\tilde b _m}$ tends to $p_{\tilde b }$ in $L^2 (\Omega)$ strongly and in 
$H^1_0(\Omega)$ weakly.

Moreover since $L$ is convex and continuous in its second argument and since $\tilde b _m\rightharpoonup \tilde b $ in $L^\infty(\Omega;\RR^d)$ weak * and  $p_{\tilde b _m}$ converges to $p_{\tilde b }$ in $L^2(\Omega)$, we know that
\begin{displaymath}
 \int_{\Omega} p_{\tilde b }(x)L(x,\tilde b (x)) dx\le \liminf_{m\to \infty} 
\int_{\Omega} p_{\tilde b _m}(x)L(x,\tilde b _m(x)) dx  
\end{displaymath}
 and $\Phi[p_{\tilde b }]=\lim_{m\to\infty} \Phi[p_{\tilde b _m}]$.
 Therefore, $\tilde b $ is a minimizer of
 (\ref{eq:20}) subject to (\ref{eq:21})--(\ref{eq:22}).
\end{proof}

\subsection{Necessary optimality conditions}
\label{sec:necess-ptim-cond}
We now derive a necessary optimality condition for the above problem, as we did in Theorem~\ref{thm:time-optcond} in the finite time horizon problem.
\begin{theorem}
\label{thm:ergo-optcond}
	Let $M>0$. If $\tilde b _{\rm{opt}} \in \widetilde \cB_M$ is a minimizer of $\widetilde J$ over $\widetilde \cB_M$, then necessarily, for every $x \in \RR^d$, $\tilde b _{\rm{opt}}(x)$ achieves the maximum in~\eqref{eq:def-H} with $\xi$ replaced by $Du(x)$
	where $(p,u)$ solves the PDE system~\eqref{eq:S-FP}--\eqref{eq:S-HJB}. The necessary optimality condition can be written
	$$
		\tilde b _{\rm{opt}}(x) = H_\xi\left (x, Du(x)\right).
	$$
\end{theorem}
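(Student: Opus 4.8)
The plan is to mimic the perturbation argument used in the proof of Theorem~\ref{thm:time-optcond}, replacing the parabolic Fokker--Planck/HJB pair by the elliptic eigenvalue problem \eqref{eq:8}--\eqref{eq:10} and its adjoint. First I would fix the minimizer $\tilde b_{\rm opt}$, write $p = p_{\tilde b_{\rm opt}}$, $\lambda = \lambda_{\tilde b_{\rm opt}}$, take an admissible variation $\delta \tilde b \in L^\infty(\Omega;\RR^d)$ (so that $\tilde b_{\rm opt} + s\,\delta \tilde b$ stays in $\widetilde\cB_M$ for small $s>0$ whenever $|\tilde b_{\rm opt}(x)|<M$, and handle the saturated set by one-sided variations as in the proof of the bang-bang optimality), and let $(\delta p, \delta\lambda)$ be the induced variation. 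Differentiating \eqref{eq:8}--\eqref{eq:10} gives the linearized system
\begin{equation*}
\left\{\quad
\begin{aligned}
&\cL_{\tilde b_{\rm opt}}^* \delta p - \lambda\, \delta p = \delta\lambda\, p + \diver(p\,\delta\tilde b) \quad \hbox{ in } \Omega,\\
&\delta p = 0 \quad \hbox{ on } \partial\Omega,\\
&\int_\Omega \delta p = 0,
\end{aligned}
\right.
\end{equation*}
and, testing against the normalized eigenfunction $e_{\tilde b_{\rm opt}}$ of $\cL_{\tilde b_{\rm opt}}$ (using $\int_\Omega p\, e_{\tilde b_{\rm opt}} > 0$), one reads off $\delta\lambda = \int_\Omega p\, \delta\tilde b \cdot D e_{\tilde b_{\rm opt}}\,dx / \int_\Omega p\, e_{\tilde b_{\rm opt}}\,dx$, exactly as in the computation \eqref{eq:18} in the proof of Lemma~\ref{sec:stability-2}; Lemma~\ref{sec:stability-2} also provides the differentiability of $\tilde b\mapsto p_{\tilde b}$ needed to justify neglecting higher-order terms.

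Next I would compute the first variation of $\widetilde J$ in \eqref{eq:56}:
\begin{equation*}
\delta \widetilde J = \int_\Omega \delta p(x)\bigl(L(x,\tilde b_{\rm opt}(x)) + F[p](x)\bigr)\,dx + \int_\Omega p(x)\, L_b(x,\tilde b_{\rm opt}(x))\cdot\delta\tilde b(x)\,dx + \epsilon\,\delta\lambda .
\end{equation*}
The point is to eliminate $\delta p$ (and $\delta\lambda$) in favour of $\delta\tilde b$ by introducing the adjoint state $u$: one looks for $u \in H^1_0(\Omega)$ solving a stationary Bellman-type equation of the form $\cL_{\tilde b_{\rm opt}} u - \lambda u = L(\cdot,\tilde b_{\rm opt}) + F[p] + c_1\indic_\Omega$ with the constant $c_1$ and the normalization $\int_\Omega u\, p = -\epsilon$ chosen so that this problem is solvable (Fredholm: the right-hand side must be $L^2$-orthogonal to $p$, which fixes $c_1 = -\int_\Omega p\,(L(\cdot,\tilde b_{\rm opt}) + F[p])$, matching the last line of \eqref{eq:S-HJB}; the one-dimensional kernel is killed by the $\epsilon$-normalization). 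Pairing the $\delta p$-equation with $u$ and the $u$-equation with $\delta p$, integrating by parts and using both boundary conditions, the $\delta p$ terms cancel and, after absorbing the $\delta\lambda$ contribution via the identity for $\delta\lambda$ together with the relation between $e_{\tilde b_{\rm opt}}$, $u$ and the constants, one is left with
\begin{equation*}
\delta \widetilde J = \int_\Omega p(x)\bigl(L_b(x,\tilde b_{\rm opt}(x)) - Du(x)\bigr)\cdot \delta\tilde b(x)\,dx .
\end{equation*}
Since $p>0$ in $\Omega$ and $\delta\tilde b$ is an arbitrary admissible variation, optimality forces $\delta\tilde b \cdot (L_b(x,\tilde b_{\rm opt}) - Du) \geq 0$ for all admissible directions, i.e. $\tilde b_{\rm opt}(x)$ maximizes $\xi\cdot b - L(x,b)$ over $|b|\le M$ with $\xi = Du(x)$, which is precisely $\tilde b_{\rm opt}(x) = H_\xi(x, Du(x))$. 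Plugging this back into \eqref{eq:8}--\eqref{eq:10} and into the equation for $u$ recovers \eqref{eq:S-FP}--\eqref{eq:S-HJB}, including the eigenvalue coupling (the Bellman equation's spectral term $\lambda u$ and the FP drift $H_\xi(\cdot,Du)$).

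The main obstacle I anticipate is the bookkeeping around the Lagrange multipliers: unlike the parabolic case, here there are \emph{two} scalar unknowns ($\lambda$ as eigenvalue and $c_1$ as the additive constant making the adjoint equation solvable) plus the constraint $\int_\Omega u\,p = -\epsilon$, and one must check that the adjoint problem $\cL_{\tilde b_{\rm opt}} u - \lambda u = (\text{given r.h.s.})$ is solvable in $H^1_0(\Omega)$ — this is exactly Fredholm's alternative at the principal eigenvalue, so the r.h.s. must be orthogonal to $p$, which is what pins down $c_1$, and the residual freedom (adding multiples of $e_{\tilde b_{\rm opt}}$) is pinned down by the $\epsilon$-normalization. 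A secondary technical point is the treatment of the saturation set $\{|\tilde b_{\rm opt}| = M\}$: there one only gets one-sided inequalities $\delta\widetilde J \ge 0$, but since $H(x,\cdot)$ is exactly the Legendre-type transform with the constraint $|b|\le M$ built in, the pointwise condition $\tilde b_{\rm opt}(x) = H_\xi(x,Du(x))$ still captures the correct optimality relation (including the boundary case, cf. Remark~\ref{eq:Lquadra}). Regularity of $u$ (needed to make sense of $Du$ pointwise and to justify the integrations by parts) follows from the elliptic regularity recalled in Section~\ref{sec:princ-eigenv-relat-4}, since $F[p]\in L^2(\Omega)$ and, by the $C^\alpha$ bounds there, actually $u \in W^{2,s}(\Omega)$ for all $s<\infty$.
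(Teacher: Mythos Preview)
Your proposal is correct and follows essentially the same route as the paper: linearize the eigenvalue problem~\eqref{eq:8}--\eqref{eq:10}, compute $\delta\widetilde J$, introduce the adjoint $u$ via Fredholm at the principal eigenvalue with $c_1$ fixed by the solvability condition and the kernel freedom fixed by $\int_\Omega u\,p = -\epsilon$, then pair to obtain $\delta\widetilde J = \int_\Omega p\,(L_b - Du)\cdot\delta\tilde b$. The only cosmetic difference is that the paper never computes $\delta\lambda$ explicitly---the normalization $\int_\Omega u\,p = -\epsilon$ makes the coefficient of $\delta\lambda$ vanish directly---whereas you propose to first write the formula for $\delta\lambda$ and then absorb it; both arrive at the same place, and your discussion of the saturation set $\{|\tilde b_{\rm opt}| = M\}$ is in fact slightly more careful than what the paper spells out.
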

\begin{proof}

Assume that $\tilde b = \tilde b _{\rm{opt}}$ is a  minimizer of (\ref{eq:20}) subject to (\ref{eq:21})--(\ref{eq:22}).
Let $\delta \tilde b \in L^\infty( \Omega;\RR^d)$ be a small variation of $\tilde b $, and $\delta \lambda_{\tilde b }$, $\delta p_{\tilde b }$ be the corresponding variations of $\lambda_{\tilde b }$ and $p_{\tilde b }$. We have already seen that 
 \begin{eqnarray}
\label{eq:23}
  -\Delta \delta p_{\tilde b } - \diver(  \delta p_{\tilde b } \tilde b ) =  \delta\lambda_{\tilde b } p_{\tilde b } + \lambda_{\tilde b } \delta p_{\tilde b }  + \diver(  p_{\tilde b } \delta \tilde b )  \quad \hbox{ in }\Omega,\\
\label{eq:24}
\delta p_{\tilde b }=0\quad \hbox{ on }\partial \Omega,\\
\label{eq:25}
\int_\Omega \delta p_{\tilde b }=0.
 \end{eqnarray}
Up to higher degree terms which can be neglected in view of Proposition \ref{sec:princ-eigenv-relat} and Lemma \ref{sec:stability-2}, 
 the variation of the cost is
\begin{equation}
\label{eq:26}
  \begin{split}
\delta \widetilde J  = &  \int_{\Omega} \delta p_{\tilde b } (x) L(x,\tilde b (x))  dx  +  \int_{\Omega} p_{\tilde b } (x)  L_{\tilde b }(x, \tilde b (x))  \cdot \delta \tilde b (x) dx \\ &  + \epsilon \delta \lambda_{\tilde b }  +  \int_{\Omega} F[p_{\tilde b }](x) \delta p_{\tilde b }(x) dx.
\end{split}
\end{equation}
Let us consider the adjoint problem: find $u\in H^{1}_0(\Omega)$ such that
\begin{equation}
\label{eq:27}
\begin{split}
   &\int_{\Omega} \left(Du \cdot D q +    (\tilde b 
\cdot D u) q -  \lambda_{\tilde b } u q\right) dx \\
 =&  \int_{\Omega} q(x)(  L(x,\tilde b (x))  + F[p_{\tilde b }](x) ) dx  \\
&+ \left( - \int_{\Omega} p_{\tilde b }(x) \left( L(x,\tilde b (x))  + F[p_{\tilde b }](x) \right) dx \right)\int_\Omega q(x) dx ,
\end{split}
 \end{equation}
for all $q\in H^1_0(\Omega)$.
We see that the right hand side of (\ref{eq:27}) vanishes for $q=p_{\tilde b }$. From Fredholm's theory, this implies the existence of a solution $u$ and its uniqueness up to the addition of a multiple of $e_{\tilde b }$.

Hence, at leading order,
\begin{displaymath}
  \delta \widetilde J=  \int_{\Omega} (Du \cdot D \delta p_{\tilde b } +  \delta p_{\tilde b }  \tilde b \cdot Du  -  \lambda_{\tilde b } u \delta p_{\tilde b }) dx   +  \int_{\Omega} p_{\tilde b }  L_{\tilde b }(x,\tilde b ) \cdot \delta \tilde b \, dx    + \epsilon \delta \lambda_{\tilde b } .
\end{displaymath}
Since $\int_\Omega p_{\tilde b } e_{\tilde b } >0$, there exists a unique $u_{\tilde b}$ satisfying (\ref{eq:27}) and
\begin{equation}
\label{eq:28}
 \int_{\Omega}  p_{\tilde b } u_{\tilde b} dx = -\epsilon.
\end{equation}
For this choice of $u_{\tilde b}$, using (\ref{eq:23})--(\ref{eq:25}) yields
\begin{equation}
\label{eq:29}
  \delta \widetilde J=\int_{\Omega}  \delta \tilde b (x) p_{\tilde b }(x)  \cdot \left(   L_{\tilde b }(x,\tilde b (x)) - Du_{\tilde b}(x) \right) dx,
\end{equation}
and since $p_{\tilde b }>0$ in $\Omega$ from Proposition \ref{sec:stab-princ-eigenv}, the first order optimality condition  is that almost everywhere, $\delta \tilde b (x)  \cdot \left( L_{\tilde b }(x,\tilde b (x))-Du_{\tilde b}(x) \right)$ for all 
admissible variation $\delta \tilde b $ of $\tilde b $. 

Let us recall that the Hamiltonian $H$ is defined by~\eqref{eq:def-H}. 

The first order optimality condition is equivalent to saying that almost everywhere in $\Omega$,  $\tilde b (x)$ achieves the maximum in
\begin{displaymath}
	H(x,Du_{\tilde b}(x))= \max_{\eta \in \RR^d;\,  |\eta|\le M} \left(  Du_{\tilde b}(x) \cdot \eta - L(x,\eta) \right),  
\end{displaymath}
i.e., assuming that $H$ is differentiable with respect to its second argument,
\begin{equation}
  \label{eq:30}
	\tilde b (x)= H_\xi(x,Du_{\tilde b}(x)).
\end{equation}

We thus obtain the PDE system~\eqref{eq:S-FP}--\eqref{eq:S-HJB}.
\end{proof}

\section{Back to the finite horizon problem, letting $M \to \infty$ }
\label{sec:M-to-infty}
For simplicity only, we restrict ourselves to the following minimization problem
\begin{equation}
\label{eq:11}
	\text{ minimize  $J$ on $\cB_M$, } 
\end{equation}
where 
\begin{align}
\label{eq:38}
	J(b) 
	&= \int_0^T\left(\frac{\int_\Omega p_b(t,x) \left(\frac 1 2 |b(t,x)|^2 +f(x)  \right) dx}
{\int_\Omega p_b(t)} \right) dt  - \epsilon \ln\left( \int_\Omega p_b(T) \right) ,
\end{align}
and $p_b$ solves~\eqref{eq:FP-Dirichlet}. Here, $f$  is a smooth nonnegative function defined in $\overline \Omega$ and the distribution $p_0$ of $X_0$ satisfies $ p_0\ln(p_0)\in L^1 (\Omega)$.
We also suppose that $\epsilon>0$: this assumption will ensure a lower bound for $\int_\Omega p_b(t)$ uniform with respect to $M$.
\\
Let us introduce the truncation   $T_M$: $\RR^d\to \RR^d$,
\begin{displaymath}
  T_M(\xi)=\left\{
    \begin{array}[c]{ll}
      \xi \quad &\hbox{if }|\xi| \le M,\\
      M \frac {\xi}{|\xi|}   \quad &\hbox{if }|\xi|\ge M.
    \end{array}
\right.
\end{displaymath}
and the Hamiltonians 
\begin{equation}
  \label{eq:39}
H_M(\xi)=
  T_M(\xi)\cdot \left(\xi -\frac 1 2 T_M(\xi)\right), \quad\quad H(\xi)=\frac 1 2  |\xi|^2.
\end{equation}
Note that  by contrast with (\ref{eq:def-H}), the Hamiltonians in (\ref{eq:39}) do not involve $f$ because it is more convenient for what follows.

The optimality conditions for~(\ref{eq:11}) consist of a forward-backward system of PDEs:
	\begin{equation}
\label{eq:40}
	\left\{ \quad
	\begin{aligned}
	&-\partial_t u_M(t,x) -\frac {\sigma^2} 2 \Delta u_M(t,x) +\frac { H_M\left(\left(\int_\Omega  p_M(t) \right) Du_M(t,x)\right) }  {\int_\Omega  p_M(t)}=   \ds \frac{f(x)}{\int_\Omega  p_M(t)} -\gamma_M(t)  ,
	 \quad \hbox{ in } Q_T,
	 \\
 	& u_M = 0, \qquad \hbox{ on } (0,T)\times\partial \Omega,
	\\ 
	& u_M(T,x)=  -\epsilon \frac { \mathds{1}_{\Omega}(x)}  {\int_\Omega  p_M(T,y)dy }  ,	\qquad \hbox{ in } \Omega ,
     \end{aligned}
	\right.
	\end{equation}
\begin{equation}
\label{eq:47}
 \left\{ \quad 
 \begin{aligned}
	& \partial_t p_M(t,x) -\frac {\sigma^2} 2 \Delta p_M(t,x) - \diver\left( p_M  b_M \right) (t,x)
=0, \quad  \hbox{in } Q_T,
	\\
	& p_M = 0,\quad\quad \qquad \hbox{ on } (0,T)\times\partial \Omega,
	\\
	& p_M(0,\cdot) = p_0, \qquad \hbox{ in } \Omega, 
  \end{aligned}
\right.
\end{equation}
and 
\begin{equation}
  \label{eq:48}
b_M(t,x)= D_\xi H_M \left( \left(\int_\Omega  p_M(t)\right)   Du_M(t,x) \right) = T_M\left(  \left(\int_\Omega  p_M(t)\right)   Du_M(t,x) \right),
\end{equation}
and  $\gamma_M :[0,T] \to \RR$ is defined by
\begin{displaymath}
  \begin{split}
0\le  \gamma_M(t) &=
   \frac { {\ds \int_\Omega} p_M(t,x)  \left(   \frac 1 2 | b_M(t,x)|^2 
 +f(x)   \right) dx }  {\left( \int_\Omega  p_M(t)\right)^2 }.
   \end{split}
 \end{displaymath}
 \begin{remark}
   \label{sec:letting-m-to-2}
For any smooth function $\phi: [0,T]\to \RR$ compactly supported in $(0,T)$,
we test (\ref{eq:40}) by $p_M\phi$ and (\ref{eq:47}) by $u_M\phi$, subtract the resulting identities and  integrate by part:
 we obtain that  
 \begin{equation}
   \label{eq:51}
  \int_0^T \phi'(t)\int_\Omega u_M(t,x)p_M(t,x) dxdt=0.
 \end{equation}
This means that  $t\mapsto \int_\Omega u_M(t,x)p_M(t,x) dx$ is  constant. On the other hand, it is easy to see 
 from  the last equation in (\ref{eq:40}) 
that $\int_\Omega u_M(T,x)p_M(T,x) dx=-\epsilon$. 
Therefore, 
\begin{equation}
  \label{eq:50}
\int_\Omega u_M(t,x)p_M(t,x)=-\epsilon,\quad \hbox{for all } t\in [0,T].
\end{equation}
 \end{remark}

We wish to study the behaviour of a family of solutions $(b_M, p_M, u_M)_{M}$ as $M\to +\infty$.

\begin{theorem}
  \label{sec:letting-m-to}
There exist 
\begin{enumerate}
\item $u\in L^2(0,T; H_0^1(\Omega))\cap L^\infty(Q_T)$ 
\item $p\in  C^0([0,T]; L^1 (\Omega)) \cap L^\infty(0,T; L{\rm log}L (\Omega))\cap L^r(Q_T) \cap W^{1,q} (0,T; W^{-1,q}(\Omega))$ for all $1\le q \le \frac {d+2}{d+1}$, $1\le r\le  \frac {d+2}{d}$
\item A positive Radon measure $\gamma$ on $[0,T]$,
\end{enumerate}
such that,  setting $\ds b(t,x) = \left(\int_\Omega p(t)\right)  Du(t,x)$
\begin{enumerate}
\item 
\begin{eqnarray}
  \label{eq:63}
  \int_\Omega p(t,x) dx> 0, \quad \forall t\in [0,T], \\
\label{eq:64} \int_{Q_T} |b(t,x)|^2 dxdt +\int_{Q_T} p(t,x)|b(t,x)|^2 dxdt <\infty,\\
\label{eq:65} \int_\Omega p(t,x) u(t,x)= -\epsilon \quad  \hbox{for almost all } t\in (0,T)
\end{eqnarray}
\item up to the extraction of a sequence,
  \begin{enumerate}
  \item  $p_M\to p$ in $L^1(Q_T)$, almost everywhere, in $W^{1,q} (0,T; W^{-1,q}(\Omega))$ weakly for all $1\le q \le \frac {d+2}{d+1}$,
  \item  $u_M\to u$ in $L^1(Q_T)$, almost everywhere in $Q_T$, and in  $L^2(0,T; H^1_0(\Omega))$ weakly,
  \end{enumerate}
\item
\begin{enumerate}
\item $p$ is a distributional solution of  (\ref{eq:FP-Dirichlet}), i.e for all $\psi\in C^\infty(Q_T)$, such that $\psi=0$ on 
$[0,T]\times \partial \Omega$ and at $t=T$,
\begin{equation}
\int_{Q_T}  p_b\left( \partial_t \psi(t,x) -  \frac {\sigma^2} 2 \Delta \psi (t,x) +b(t,x)\cdot D\psi(t,x)\right)dxdt
= \int_\Omega p_0(x)\psi(0,x) dx
\end{equation}

\item $u$ is a distributional solution of
	\begin{equation}
\label{eq:61}
	\left\{ \quad
	\begin{aligned}
	&-\partial_t u(t,x) -\frac {\sigma^2} 2 \Delta u(t,x) +\frac { H\left(\left(\int_\Omega  p(t) \right) Du(t,x)\right) }  {\int_\Omega  p(t)}=   \ds \frac{f(x)}{\int_\Omega  p(t)}   -\gamma(t) ,
	 \quad \hbox{ in } Q_T,
	\\ 
	& u(T,x)=  -\epsilon \frac { \mathds{1}_{\Omega}(x)}  {\int_\Omega  p(T,y)dy }  ,	\qquad \hbox{ in } \Omega 
     \end{aligned}
	\right.
	\end{equation}
      \end{enumerate}
      
    \item
      \begin{equation}\label{eq:66}
        J(b)\le \lim_{M\to +\infty} \inf_{b'\in \cB_M} J(b') =\inf_{M>0} \inf_{b'\in \cB_M} J(b') . 
      \end{equation}
    \end{enumerate}
    
  \end{theorem}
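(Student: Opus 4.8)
The strategy is to pass to the limit $M\to+\infty$ in the optimality system \eqref{eq:40}--\eqref{eq:48} using a priori estimates that are \emph{uniform in $M$}, then identify the limit as a weak solution of \eqref{eq:61} and of \eqref{eq:FP-Dirichlet}, and finally compare costs. The first task is to obtain uniform bounds. From \eqref{eq:50} and the fact that $f\ge0$ we get $\int_0^T\frac{1}{\int_\Omega p_M(t)}\int_\Omega p_M(t,x)\bigl(\tfrac12|b_M|^2+f\bigr)dx\,dt \le J(b_M)+\epsilon\ln\bigl(\int_\Omega p_0\bigr)$, and since $J(b_M)=\inf_{\cB_M}J$ is bounded above (test with $b\equiv 0$), this gives a uniform bound on $\int_{Q_T}p_M|b_M|^2$ \emph{after} one controls $\int_\Omega p_M(t)$ from below. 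That lower bound is exactly where $\epsilon>0$ is used: the $-\epsilon\ln(\int_\Omega p_M(T))$ term in $J$ forces $\int_\Omega p_M(T)$ to stay bounded below uniformly in $M$, and then a Gronwall / energy argument on \eqref{eq:47} propagates a uniform lower bound on $\int_\Omega p_M(t)$ backward on all of $[0,T]$. Using this, the entropy structure of the Fokker--Planck equation (multiply \eqref{eq:47} by $1+\ln p_M$) yields the uniform $L^\infty(0,T;L\log L(\Omega))$ bound on $p_M$ and, via the Aubin--Lions / parabolic Gagliardo--Nirenberg machinery familiar from the mean-field-games literature, the $L^r(Q_T)$ and $W^{1,q}(0,T;W^{-1,q}(\Omega))$ bounds; this gives strong $L^1(Q_T)$ and a.e.\ compactness of $p_M$. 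For $u_M$: from \eqref{eq:40} and the explicit terminal condition, together with $\|T_M\|\le M$ replaced by the better bound coming from $H_M(\xi)\le\tfrac12|\xi|^2$, one tests \eqref{eq:40} by $u_M$ to get $u_M$ bounded in $L^2(0,T;H^1_0(\Omega))$; an $L^\infty(Q_T)$ bound follows from comparison/maximum-principle arguments on the HJB equation since the right-hand side $f/\int_\Omega p_M(t)$ is bounded once $\int_\Omega p_M(t)$ is bounded below.

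\textbf{Passing to the limit.} With these compactnesses in hand, extract a subsequence so that $p_M\to p$ in $L^1(Q_T)$ and a.e., $u_M\rightharpoonup u$ in $L^2(0,T;H^1_0)$ and (by a separate argument, e.g.\ using an equation for $u_M$ with bounded data) a.e., and $\gamma_M\,dt\rightharpoonup\gamma$ weakly-$*$ as measures on $[0,T]$ (the bound $\gamma_M\ge0$ and $\int_0^T\gamma_M\le C$ coming from the energy identity). Because $\int_\Omega p_M(t)\ge c>0$ uniformly and the map $t\mapsto\int_\Omega p_M(t)$ is equicontinuous (from the $W^{1,q}$-in-time bound), $\int_\Omega p_M(t)\to\int_\Omega p(t)$ uniformly in $t$, giving \eqref{eq:63}. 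The drift $b_M=T_M((\int_\Omega p_M(t))Du_M)$ converges weakly in $L^2$ to $b=(\int_\Omega p(t))Du$ since $Du_M\rightharpoonup Du$ in $L^2$ and the truncation is eventually inactive on bounded sets; combined with strong convergence of $p_M$ this lets one pass to the limit in the linear-in-$p$ Fokker--Planck equation \eqref{eq:47} to obtain the distributional formulation of \eqref{eq:FP-Dirichlet}, and a lower-semicontinuity argument gives \eqref{eq:64}. For the HJB equation one passes to the limit in \eqref{eq:40}: the delicate term $H_M((\int_\Omega p_M)Du_M)/\int_\Omega p_M$ is handled by writing $H_M(\xi)=\tfrac12|\xi|^2-\tfrac12|\xi-T_M(\xi)|^2$ and using a.e.\ convergence of $Du_M$ plus Fatou / a Minty-type monotonicity argument to identify the limit as $\tfrac12|(\int_\Omega p(t))Du|^2/\int_\Omega p(t) = H((\int_\Omega p(t))Du)/\int_\Omega p(t)$, arriving at \eqref{eq:61}. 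The identity \eqref{eq:65} passes to the limit from \eqref{eq:50} using strong $L^1$ convergence of $p_M$ and a.e.\ convergence plus uniform $L^\infty$ bound of $u_M$ (dominated convergence, after integrating in $t$).

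\textbf{The cost comparison \eqref{eq:66}.} Since $\cB_M\subset\cB_{M'}$ for $M\le M'$, the quantity $\inf_{\cB_M}J$ is nonincreasing in $M$, so $\lim_{M\to\infty}\inf_{\cB_M}J=\inf_{M>0}\inf_{\cB_M}J$; this is the easy equality in \eqref{eq:66}. For the inequality $J(b)\le\lim_M\inf_{\cB_M}J$, note $b_M$ realizes $\inf_{\cB_M}J$, i.e.\ $J(b_M)=\inf_{\cB_M}J$, so it suffices to show $J(b)\le\liminf_M J(b_M)$. Here $J(b)=\int_0^T\frac{1}{\int_\Omega p(t)}\int_\Omega p(t)(\tfrac12|b(t,x)/(\int_\Omega p(t))|^2\cdot(\ldots))$— more precisely one writes $J(b)$ in terms of the limiting $(p,b)$ with $b$ the feedback drift, using that $b$ solves the limiting Fokker--Planck equation so $p=p_b$, and then: the term $-\epsilon\ln(\int_\Omega p_M(T))\to-\epsilon\ln(\int_\Omega p(T))$ by the uniform convergence of $\int_\Omega p_M(t)$; and the running-cost term passes to the limit by weak lower semicontinuity — $\int_{Q_T}p_M|b_M|^2$ is lower-semicontinuous under the joint convergence $p_M\to p$ strongly in $L^1$, $\sqrt{p_M}\,b_M\rightharpoonup\sqrt{p}\,b$ weakly (a standard convexity/Fatou argument, or Ioffe's theorem), while $\int_\Omega p_M(t)\,f$ divided by $\int_\Omega p_M(t)$ converges by dominated convergence. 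Hence $J(b)\le\liminf_M J(b_M)=\liminf_M\inf_{\cB_M}J=\inf_{M>0}\inf_{\cB_M}J$, which is \eqref{eq:66}.

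\textbf{Main obstacle.} The crux is twofold: first, establishing the \emph{$M$-uniform} lower bound $\int_\Omega p_M(t)\ge c>0$ on all of $[0,T]$ (not just at $t=T$) — this is what makes every quotient $1/\int_\Omega p_M(t)$ and the whole scheme well-behaved, and it is genuinely where the hypothesis $\epsilon>0$ enters; second, the identification of the nonlinear Hamiltonian term in the limit of the HJB equation, where weak convergence of $Du_M$ alone is insufficient and one must upgrade to a.e.\ convergence of $Du_M$ (via an energy/monotonicity argument on the difference of HJB equations, exploiting the convexity of $H$ and $H_M$) or argue by a Minty-type device together with the lower semicontinuity already used for the cost. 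Everything else is the now-standard stability theory for weak solutions of Fokker--Planck and Hamilton--Jacobi equations.
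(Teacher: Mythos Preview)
Your overall architecture---uniform estimates, compactness, pass to the limit, lower semicontinuity of the cost---matches the paper, and you correctly single out the two crucial points: the $M$-uniform lower bound on $\int_\Omega p_M(t)$ and the identification of the nonlinear Hamiltonian term. However, several of your technical routes differ from the paper's and some do not close as stated. First, the mass lower bound: you invoke a ``Gronwall/energy argument backward,'' but the paper simply observes that $t\mapsto\int_\Omega p_M(t)$ is \emph{nonincreasing} (a general property of weak solutions of Fokker--Planck equations with Dirichlet conditions, see \cite[Proposition~3.10]{MR3305653}), so the lower bound at $t=T$ forced by $\epsilon>0$ immediately gives the bound on all of $[0,T]$. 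Second, testing \eqref{eq:40} by $u_M$ alone does not obviously yield the $L^2(0,T;H^1_0)$ bound, because the quadratic Hamiltonian produces a term of size $\int|Du_M|^2|u_M|$ that cannot be absorbed into $\frac{\sigma^2}{2}\int|Du_M|^2$ without smallness; the paper tests instead by $u_M e^{\lambda u_M^2}$ with $\lambda$ large (the Boccardo--Murat--Puel device), which works once the $L^\infty$ bound on $u_M$ is in hand.

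The most substantive divergence is how you obtain a.e.\ convergence of $Du_M$ and how you handle $\gamma_M$ in the HJB limit. Your claim that $b_M=T_M\bigl((\int p_M)Du_M\bigr)\rightharpoonup b$ in $L^2$ merely from $Du_M\rightharpoonup Du$ is not justified: $T_M$ is nonlinear and weak convergence does not commute with it. The paper proceeds differently and more directly: from the $L^2(0,T;H^1_0)$ and $L^\infty$ bounds it observes that $-\partial_t u_M-\frac{\sigma^2}{2}\Delta u_M$ is bounded in $L^1(Q_T)$, then invokes the compactness theory for the heat equation with $L^1$ data \cite{MR1025884,MR1453181} to get $u_M\to u$ in $L^1(Q_T)$ \emph{and} $Du_M\to Du$ a.e.\ in $Q_T$. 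With a.e.\ convergence of $Du_M$, the convergence $b_M\to b$ a.e.\ is immediate (the truncation is eventually inactive pointwise), and equi-integrability of $p_Mb_M$ follows from the uniform bound on $\int p_M|b_M|^2$ together with strong $L^1$ convergence of $p_M$. For the HJB limit, rather than passing $\gamma_M\rightharpoonup\gamma$ directly (which is awkward since $\gamma$ is only a measure), the paper sets $v_M(t,x)=u_M(t,x)+\int_t^T\gamma_M(s)\,ds$; then $v_M$ satisfies a Bellman equation with \emph{no} $\gamma_M$ term and bounded right-hand side, to which the stability results of \cite{MR766873,MR3452251} apply cleanly, and the measure $\gamma$ reappears only at the end when one undoes the substitution. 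Your proposed Minty/monotonicity route is plausible in spirit but is not worked out and risks circularity, since the limiting equation already involves the unknown measure $\gamma$.
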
  
  \begin{remark}
    Equation (\ref{eq:50})  plays the role of the energy identity that is usually found in the theory of mean field games, see for example \cite{MR3305653}.
\\
Besides, it seems possible to prove that 
\begin{displaymath}
   \gamma(t) =
   \frac { {\ds \int_\Omega} p(t,x)  \left(   \frac 1 2 | b(t,x)|^2 
 +f(x)   \right) dx }  {\left( \int_\Omega  p(t)\right)^2 },
\end{displaymath}
by using the  crossed regularity results in the spirit of what did A. Porretta for weak solutions of mean field games, but this would be too long for the present paper.
  \end{remark}

\begin{proof}

 We start by looking for estimates uniform in $M$ and compactness results.

For all what follows, it will be important to have a bound from below on $\int_\Omega p_M(t)$ uniform in $M$;
first, since $f$ is bounded from below, and since  $-\epsilon \ln \left(\int_\Omega p_M (T) \right)\le J(b_M)\le J(0)$, we see that  $\int_\Omega p_M(T)$ is bounded from below by a positive constant independent of $M$. Second, from  known results on weak solutions of Fokker-Planck equations, \cite[Proposition 3.10]{MR3305653}, we know that $t\mapsto \int_\Omega p_M(t)$ is a nonincreasing function from $[0,T]$ to $(0,+\infty)$. Therefore $\int_\Omega p_M(t)$ is bounded from below by a positive constant uniform in $t$ and $M$ (but not in $\epsilon$), and from above by $\int_\Omega p_0$.
\\
Using this information and again the fact that $J(b_M)\le J(0)$, we see that $\|\gamma_M\|_{L^1 (0,T)}$
 is bounded  uniformly with respect to  $M$. 
\\
From the latter fact, we immediately deduce  a bound on $ \|u_M\|_{L^\infty(Q_T)}$ uniform in $M$, by constructing subsolutions and supersolutions of the form $(t,x)\mapsto \theta (t)$.  Then  testing (\ref{eq:40}) by $u_M e^{\lambda u_M^2}$ with $\lambda$ large enough,  we deduce that $\|u_M\|_{L^2 (0,T; H^1_0(\Omega))}$ is bounded uniformly with respect to $M$.
This implies that  $  -\partial_t u_M -\Delta u_M $ is bounded in $L^1(Q_T)$ uniformly with respect to $M$.
From classical results on the heat equation with $L^1$ data, see e.g. \cite{MR1025884,MR1453181},  we deduce that $(u_M)_M$ is relatively compact in $L^1(Q_T)$ and that  there exists $u\in  L^2 (0,T; H^1_0(\Omega))\cap L^\infty(Q_T)$ such that, up to the extraction of a sequence,   $u_M\to u$ in $L^1(Q_T)$ and almost everywhere in $Q_T$, 
even in $L^q(Q_T)$ for all $1\le q <+\infty$ and in  $L^\infty(Q_T)$ weak * due to the $L^\infty (Q_T)$ bound,
and $Du_M\to Du$ almost everywhere in $Q_T$. Moreover $t\mapsto u(t,\cdot)$ has a trace at $t=T$ which is a Radon measure.
\\
Let us go back to the Fokker-Planck equation. First, the uniform bounds on $\|\gamma_M\|_{L^1 (0,T)}$  and on $\int_\Omega p_M(t)$ imply that $\int_{Q_T} p_M |b_M|^2$ is uniformly bounded with respect to $M$. Second,
 we deduce from (\ref{eq:48}) and  the bound on $\|u_M\|_{L^2 (0,T; H^1_0(\Omega))}$ that $\|b_M\|_{L^2 (Q_T)}$ is bounded uniformly in $M$. The latter two points  enable us to use \cite[Proposition 3.10]{MR3305653},
 which  ensures that for all $1\le q < \frac {d+2}{d+1}$ and $1\le r< \frac {d+2}d$, there exists a constant $C>0$ uniform in $M$ such that
\begin{displaymath}
   \|p_M\|_{L^\infty (0,T; L^1 (\Omega) )}
+\|p_M\|_{L^r(Q_T)}+ \|Dp_M\|_{L^q(Q_T)}+  \|\partial_t p_M\|_{L^q(0,T; W^{-1,q}(\Omega))}\le C,
\end{displaymath}
where $C$ depends on the bound on  $\int _{Q_T} p_M|b_M|^2$ and on $\|p_0\|_{L^1(\Omega)}$.

Moreover, since $p_0\in L {\rm{log}} L (\Omega)$  then for  $1\le q \le \frac {d+2}{d+1}$ and $1\le r \le \frac {d+2}d$,
\begin{displaymath}
  \|p_M\|_{L^\infty (0,T; L{\rm{log}}L (\Omega) )}
+\|p_M\|_{L^r(Q_T)}+ \|Dp_M\|_{L^q(Q_T)}+  \|\partial_t p_M\|_{L^q(0,T; W^{-1,q}(\Omega))}\le C,
\end{displaymath}
where $C$ depends on the bound on  $\int _{Q_T} p_M|b_M|^2$ 
and $ \|p_0\|_{L{\rm{log}}L (\Omega) }$.

Since $\|b_M\|_{L^2 (Q_T)}$ is bounded uniformly in $M$, the last point in \cite[Proposition 3.10]{MR3305653} ensures also that $p_M$ lies in a relatively compact subset of $L^1(Q_T)$.\\
Therefore, 
there exists a nonnegative function $p\in L^1(Q_T)\cap W^{1,q} (0,T; W^{-1,q}(\Omega))$
for all $1\le q \le \frac {d+2}{d+1}$,
 such that up to an extraction of a subsequence, we may assume that
 $p_M\to p$ in $L^1(Q_T)$, almost everywhere, and in $W^{1,q} (0,T; W^{-1,q}(\Omega)) $ weakly. 
Therefore, up to a further extraction,   $t\mapsto \int_\Omega p_M(t,y)dy$ tends to $t\mapsto \int_\Omega p(t,y)dy$  strongly in $L^1 (0,T)$ and a.e. in $(0,T)$.

 Moreover, we know that $p_M$ belongs to $C([0,T];L^1(\Omega))$ and  is bounded in $ L^\infty(0,T; L{\rm{log}} L(\Omega))$ by a constant $C$ independent of  $M$.
Let us fix $t\in [0,T]$. There exists a sequence $t_n$ which tends to $t$ such that $\|p_M(t_n)\|_{ L{\rm{log}} L(\Omega)}\le C$ and $p_M(t_n,\cdot)\to p_M(t,\cdot)$ in $L^1(\Omega)$.
Up to the extraction of a subsequence, we can assume that $p_M(t_n,x)\to p_M(t,x)$ almost everywhere in $\Omega$. Then Fatou lemma implies that $\int_{\Omega} p_M(t,x)\log(p_M(t,x) )dx \le \liminf_{t_n\to t} \int_{\Omega} p_M(t_n,x)\log(p_M(t_n,x) ) dx\le C$. This implies that $p_M(t)$ is bounded in $L{\rm{log}} L(\Omega)$ uniformly with respect to $M$ and $t$.
Hence,  for all $t$, $(x\mapsto p_M(t,x))_M$ is equi-integrable. By Dunford-Pettis theorem,  $p_M(t,\cdot)$ is relatively weakly compact 
in $L^1(\Omega)$. Now choosing $t=T$,  since we already know that $p_M(T,\cdot)\rightharpoonup p(T,\cdot)$ weakly in  $W^{-1,q}(\Omega)$, we  deduce that $p_M(T,\cdot)\rightharpoonup p(T,\cdot)$ weakly in $L^1(\Omega)$.

Let us now consider the sequence $b_M$: from (\ref{eq:48}) and the almost everywhere convergence of $Du_M$ to $Du$ and of $p_M$ to $p$, we deduce that $b_M $ tends to $b$  almost everywhere in $Q_T$, where
\begin{displaymath}
  b(t,x)=   \left(\int_\Omega  p(t)\right)   Du(t,x).
\end{displaymath}
From Fatou's lemma and the uniform estimate on  $\|b_M\|_{L^2 (Q_T)}$, we see that $b\in L^2 (Q_T)$. Similarly, we see that $\int _{Q_T} p|b|^2$ is finite.
\\
We now use  the uniform bound on $\int_{Q_T} p_M |b_M|^2$ and the strong convergence of $p_M$ to $p$ in $L^1(Q_T)$: for all measurable  subset $E$ of $Q_T$,
\begin{displaymath}
  \int_E p_M |b_M| \le \left(\int_{Q_T}   p_M |b_M|^2 \right)^{\frac 1 2}\left(\int_E p_M\right)^{\frac 1 2},
\end{displaymath}
which implies that $p_M b_M$ is equiintegrable. Therefore $p_Mb_M\rightharpoonup pb $ in $L^1(Q_T)$.\\
Hence, $p$ is a
 distributional solution of (\ref{eq:FP-Dirichlet}), i.e., for all test function $\phi \in C^\infty(Q_T)$ such that $\phi=0$ on $[0,T]\times \partial \Omega$ and on $\{T\}\times \Omega$,
 \begin{displaymath}
   \int_{Q_T}   ( -\partial_t \phi -\Delta \phi + b\cdot D\phi ) p -\int_{\Omega}  p_0(x) \phi(0,x) dx =0.
 \end{displaymath}
Then  using \cite[Theorem 3.6]{MR3305653}, $p\in C([0,T];L^1 (\Omega) )$ and $p$ is a renormalized solution of  (\ref{eq:FP-Dirichlet}), because $b\in L^2(Q_T)$ and  $\int_{Q_T } p |b|^2 $ is finite. 
\\
For $\phi$ as in Remark \ref{sec:letting-m-to-2}, we can pass to the limit in (\ref{eq:51}), because $u_M
\rightharpoonup u$ in $L^\infty(Q_T)$ weak * and $p_M\to p$ in $L^1(Q_T)$, and we get 
  \begin{equation*}
  \int_0^T \phi'(t)\int_\Omega u(t,x)p(t,x) dx=0,
 \end{equation*}
which implies that $t\mapsto \int_\Omega u(t,x)p(t,x) dx$ is constant. 
From (\ref{eq:50}), we know that $\int_{Q_T} p_M(t,x) u_M(t,x) dxdt =-\epsilon T$. Passing to the limit, 
we see that $\int_{Q_T} p(t,x) u(t,x) dxdt =-\epsilon T$. Since $\int_\Omega u(t,x)p(t,x) dx$ does not depend  on $t$, we conclude that
\begin{equation}
  \label{eq:52}
\int_\Omega u(t,x)p(t,x) dx=-\epsilon.
\end{equation}
\\
We are left with passing to the limit in the Bellman equation (\ref{eq:40}). Since $0\le \gamma_M$ is bounded in $L^1 (0,T)$,
$\int_t^T \gamma_M(s)ds$ is bounded in BV, and 
there exists a bounded positive measure $\gamma$ on $[0,T]$ such that 
 up to a further extraction of a subsequence,  $\gamma_M\rightharpoonup \gamma$  in the sense of measures
and $\mu_M:  \mu_M(t)= \int_t^T \gamma_M(s)ds  $ converges to some $\mu$ in $L^q (0,T)$ for all $1\le q<\infty$ and weakly * in $L^\infty (0,T)$, and $\frac {d\mu}{dt}=-\gamma$ in the sense of distributions.  
We deduce that $\mu$ has bounded variations, so $\mu(T)=\lim_{t\to T} \mu(t)$ exists. 
For a smooth function $\psi$  compactly supported in $(0,T]$,  we get by passing to the limit that 
\begin{equation}
  \label{eq:53}
  \int_0^T \psi(t) d\gamma(t)-\int_0^T \psi'(t) \mu(t)dt =0.
\end{equation}
 Let us set $v_M(t,x)= u_M(t,x)+\mu_M(t) $: we see that $v_M$ tends to $v$: $v(t,x)=u(t,x)+\mu(t)$ in 
 $L^q (Q_T)$ for all $1\le q<\infty$, weakly * in $L^\infty (0,T)$, and in $L^2 (0,T; H^1(\Omega))$ weakly.
The function $v_M$  is a solution of the following boundary value problem:
	\begin{equation}
\label{eq:49}
	\left\{ \quad
	\begin{aligned}
	&-\partial_t v_M(t,x) -\frac {\sigma^2} 2 \Delta v_M(t,x) +\frac { H_M\left(\left(\int_\Omega  p_M(t) \right) Dv_M(t,x)\right) }  {\int_\Omega  p_M(t)}=   \ds \frac{f(x)}{\int_\Omega  p_M(t)}   ,
	 \quad \hbox{ in } Q_T,
	 \\
 	& v_M - \mu_M = 0, \qquad \hbox{ on } (0,T)\times\partial \Omega,
	\\ 
	& v_M(T,x)=  -\epsilon \frac { \mathds{1}_{\Omega}(x)}  {\int_\Omega  p_M(T,y)dy }  ,	\qquad \hbox{ in } \Omega .
     \end{aligned}
	\right.
	\end{equation}
By using stability results for weak solutions of Bellman equations, see \cite{MR766873,MR3452251}, we obtain that $v$ is a distributional solution 
of 
	\begin{equation}
\label{eq:55}
	\left\{ \quad
	\begin{aligned}
	&-\partial_t v(t,x) -\frac {\sigma^2} 2 \Delta v(t,x) +\frac { H\left(\left(\int_\Omega  p(t) \right) Dv(t,x)\right) }  {\int_\Omega  p(t)}=   \ds \frac{f(x)}{\int_\Omega  p(t)}   ,
	 \quad \hbox{ in } Q_T,
	\\ 
	& v(T,x)=  -\epsilon \frac { \mathds{1}_{\Omega}(x)}  {\int_\Omega  p(T,y)dy }  ,	\qquad \hbox{ in } \Omega ,
     \end{aligned}
	\right.
	\end{equation}
i.e. that for all smooth function $\psi: \overline Q_T \to \RR$ with compact support in $(0,T]\times \Omega$,
\begin{displaymath}
  \begin{split}
&  \int_{Q_T} v(t,x)  \partial_t \psi(t,x) dxdt + 
\epsilon \frac {\int_\Omega  \psi(T,x)  dx} {\int_\Omega  p(T,x)dx } 
 \\
& +\frac {\sigma^2} 2 \int_{Q_T}  D v(t,x)\cdot D\psi(t,x) dx dt
 +
\int_{Q_T}\frac { H\left(\left(\int_\Omega  p(t) \right) Dv(t,x) \right) }  {\int_\Omega  p(t)} \psi(t,x) dx  dt
\\= & \int_{Q_T}  \ds \frac{f(x)}{\int_\Omega  p(t)}    \psi(t,x) dx dt .   
  \end{split}
\end{displaymath}
From this and (\ref{eq:53}), we deduce that
\begin{equation}
  \label{eq:54}
  \begin{split}
&  \int_{Q_T} u(t,x)  \partial_t \psi(t,x) dxdt + 
\epsilon \frac {\int_\Omega  \psi(T,x)  dx} {\int_\Omega  p(T,x)dx } 
 \\
& +\frac {\sigma^2} 2 \int_{Q_T}  D u(t,x)\cdot D\psi(t,x) dx dt
 +
\int_{Q_T}\frac { H\left(\left(\int_\Omega  p(t) \right) Du(t,x) \right) }  {\int_\Omega  p(t)} \psi(t,x) dx  dt
\\= & \int_{Q_T}  \ds \frac{f(x)}{\int_\Omega  p(t)}    \psi(t,x) dx dt - \int_{Q_T} \psi(t,x) d\gamma(t)dx    ,
  \end{split}
\end{equation}
which means that $u\in L^2(0,T; H^1_0(\Omega))$ is a distributional solution  of  (\ref{eq:61}), which furthermore satisfies (\ref{eq:65}).\\
  Finally (\ref{eq:66}) is a consequence of  the weak convergence of  $p_M(T)$ to $p(T)$ in $L^1(\Omega)$, the almost everywhere convergence of $p_M$, the almost everywhere convergence of $t\mapsto \int_\Omega p_M(t)$ to $t\mapsto \int_\Omega p(t)$,  the almost everywhere convergence of $p_M |b_M|^2 $ to $p |b|^2 $, using Fatou lemma when necessary.
\end{proof}

\section{Numerical method for the finite horizon problem}
\label{sec:numerics-time}

\subsection{Numerical method}
\label{subsec:finite-hor-num-meth}

\paragraph{\textbf{Discretization.} }
We will use finite differences.
To save notations, we focus on one-dimensional problems, but the same method can be generalized e.g. to dimension $2$. Let us take $\Omega = [0,x_{\mathrm{max}}] \subset \RR$ for some $x_{\mathrm{max}}>0$. 
Let $N_T$ be a positive integer and let $h \in (0,x_{max})$ be such that $x_{max}/h$ is an integer. Let us denote $\Delta t = T/N_T$ and $N_h = x_{max}/h$. We consider uniform grids on $[0,T]$ and $[0,x_{max}]$ with respectively $(N_T+1)$ and $(N_h+1)$ points. Set $t_n = n\Delta t,$ and $x_i = i \,h$ for $(n,i) \in \{0,\dots,N_T\}\times\{0,\dots,N_h\}$.
The values of $u$ and $p$ at $(x_i,t_n)$ are respectively approximated by $U^{n}_{i}$ and $P^{n}_{i}$, for each $(n,i)$. We will use the notations $U_i = (U_i^n)_{n=0,\dots,N_T}$, $U^n = (U_i^n)_{i=0,\dots,N_h}$, and likewise for $P$. 

In the sequel, following~\cite{MR3135339} in the context of MFG, we consider a numerical Hamiltonian corresponding to $\widecheck H$ defined by~\eqref{eq:def-checkH}. More precisely, we consider $\widetilde H: \RR \times \RR_+^*  \times \RR \times \RR \to \RR$, $(x, \mu,\xi_1,\xi_2) \mapsto \widetilde H (x, \mu, \xi_1,\xi_2)$ satisfying the following conditions:
\begin{itemize}
	\item Monotonicity: $\widetilde H$ is non-increasing with respect to $\xi_1$ and nondecreasing with respect to $\xi_2$.
	\item Consistency: $\widetilde H(x, \mu, \xi, \xi) = \widecheck H(x, \mu, \xi)$.
	\item Differentiability: $\widetilde H$ is of class $\mathcal C^1$ in all variables.
	\item Convexity: for all $x \in \RR, \mu \in \RR_+^*$,  $(\xi_1,\xi_2) \mapsto \widetilde H(x, \mu, \xi_1, \xi_2)$ is convex. 
\end{itemize}

\begin{remark}
	For example, if $L(x,b) = f(x) + \frac{1}{2}|b|^2$, then $\widecheck H$ is given in Remark~\ref{eq:Lquadra} and for the numerical Hamiltonian, one can take
	$$
	\widetilde H(x,\mu, \xi_1,\xi_2) = 
	\begin{cases}
		\frac{1}{2} \mu \left( (\xi_1^-)^2 + (\xi_2^+)^2 \right) - \frac{f(x)}{\mu}, &\hbox{ if } \mu\left( (\xi_1^-)^2 + (\xi_2^+)^2 \right)^{1/2} \leq M,
		\\
		\frac{1}{2} M \left( (\xi_1^-)^2 + (\xi_2^+)^2 \right)^{1/2} - \frac{f(x)}{\mu} - \frac{M^2}{2\mu}, &\hbox{ otherwise,}
	\end{cases}
	$$ 
	where $x \in \RR^d$, $\xi_1, \xi_2 \in \RR$, $\mu \in \RR_+^*$.
\end{remark}
We also consider discrete versions of $F,\, G$ that we will be noted  $\widetilde  F, \,\widetilde G: \RR^{N_h+1} \to \RR^{N_h+1}$. 
A typical set of assumptions that can be made on $\widetilde \varphi\in \{ \widetilde F,\widetilde G\} $ is that
\begin{itemize}
\item
  $\widetilde \varphi$ is Lipschitz continuous from $\RR^{N_h+1}$  endowed with the discrete $L^2 $ norm : $\|V\|_{2}=  \left(h \sum_{i} V_i^2\right)^{\frac 1 2}$ to itself, with a Lipschitz constant independent of $h$

\item Let $I_h$ be the  piecewise linear interpolation  at the grid nodes. There exists a continuous and bounded function $\omega: \RR_+ \to \RR_+$ such that $\omega(0) = 0$ and for all $p \in L^2(\Omega)$, for all sequences $( P^{(h)})_h$, 
	$$
		\left\| \varphi [p] -   I_h \left( \widetilde \varphi[P^{(h)}]\right) \right\|_{L^2(\Omega)}
		\leq
		\omega\left( \|p -  I_h P^{(h)}\|_{L^2(\Omega)}\right).
	$$
\end{itemize}

We introduce the finite difference operators: for $W \in \RR^{N_T+1}$,
\begin{align*}
	(D_t W)^n &= \frac{1}{\Delta t}(W^{n+1} - W^n), \qquad n \in \{0, \dots N_T-1\},
\end{align*}
and for $W \in \RR^{N_h+1},$
\begin{align*}
	(D^+ W)_i &= \frac{1}{h} (W_{i+1} - W_{i}), \qquad i  \in \{0, \dots N_h-1\},
	\\
	(\Delta_h W)_i &= -\frac{1}{h^2} \left(2 W_i - W_{i+1} - W_{i-1}\right), \qquad i \in \{ 1, \dots N_h-1\}, 
	\\
	[\grad_h W]_i &= \left( (D^+ W)_{i}, (D^+ W)_{i-1} \right)^T, \qquad i \in \{ 0, \dots N_h-1\}.
\end{align*}

\paragraph{\textbf{Discrete HJB equation.} }
To alleviate the notations, let us introduce, for $P,U \in \RR^{N_h+1}$,
$$
	\cF(P,U)_i = 
	-  {\ds \sum_k} h P_k  \widetilde H_\mu \left(x_k, \sum_j h P_j,  DU_k \right) 
	+ \frac {\widetilde F\left[ \frac {P} {\sum_j h P_j } \right](x_i)} {\sum_j h P_j } 
	-  \frac { {\ds \sum_k} h P_k  
    \widetilde F\left[ \frac {P} {\sum_j h P_j } \right](x_k) }  {\left( \sum_j h P_j\right)^2 }
$$
and
$$
	\cG(P)_i = 
	- \frac { \epsilon}  {\sum_j h P_j } 
	+ \frac 1  {\sum_j h P_j} \widetilde G\left[ \frac {P} {\sum_j h P_j } \right](x_i) 
	- \frac {\sum_k P_k \widetilde G\left[ \frac {P} {\sum_j h P_j } \right](x_k)  h}  {\left(\sum_j h P_j\right)^2}.
$$
Then, for the HJB equation~\eqref{eq:HJB-checkH}, we consider the following finite difference scheme:
\begin{align}
	- (D_t U_{i})^{n} - \frac{\sigma^2}{2} (\Delta_h U^{n})_{i}
	+ \widetilde  H\left(x_i, \sum_j h P_j^{n+1}, [\grad_h U^n]_i\right)
	& = \cF(P^{n+1},U^{n})_i
	\label{eq:HJBcond-H-scheme-discrete-edo}
	\\ &  i \in \{1,\dots,N_h-1\}, n \in \{0, \dots, N_T-1\},
	\notag
	\\
	U^{n}_{i}  = 0, \qquad & i \in \{0, N_h\}, n \in \{0, \dots,  N_T-1\},   
	\label{eq:HJBcond-H-scheme-discrete-bc}
	\\
	\label{eq:HJBcond-H-scheme-discrete-final}
	U^{N_T}_{i} = \cG(P^{N_T})_i \,, \qquad & i \in \{0, \dots, N_h\} .
\end{align}

\paragraph{\textbf{Discrete FP equation.} } 
To define a discretization of the FP equation, we consider the weak form of~\eqref{eq:time-FP}. It involves, for a smooth $w$,
\begin{align*}
	&- \int_{\Omega} \diver\left( p(t,\cdot) \widecheck H_\xi\left(\cdot, \int_\Omega p(t), \grad u(t,\cdot)\right) \right)(x) w(t,x) dx
	\\
	=\, &
	\int_{\Omega} p(t,x) \widecheck H_\xi\left(x, \int_\Omega p(t), \grad u(t,x)\right) \cdot \grad w(t,x) dx,
\end{align*}
where we used integration by parts and the Dirichlet boundary condition.

This leads us to introduce the following discrete operator (see~\cite{MR3135339} for more details), for $\mu\in\RR$, $U, P \in \RR^{N_h+1}$, and for $ i \in \{1,\dots, N_h-1\}$,
\begin{equation}
	\label{eq:def-B-div}
	\begin{aligned}
	&\mathcal B_i^{(\mu,U)}(P) 
	= \frac{1}{h} 
	\left[ P_i  \widetilde H_{\xi_1}\left(x_i, \mu, [\grad_h U]_i\right) 
		- P_{i-1} \widetilde H_{\xi_1}\left(x_{i-1}, \mu, [\grad_h U]_{i-1}\right)
	\right.
		\\
		&\qquad\qquad\qquad\left. +
		P_{i+1} \widetilde H_{\xi_2} \left(x_{i+1}, \mu,  [\grad_h U]_{i+1}\right)
		- P_i \widetilde H_{\xi_2} \left(x_i, \mu, [\grad_h U]_i\right)
	\right].
	\end{aligned}
\end{equation}
Then, for the discrete version of the FP equation~\eqref{eq:time-FP} we consider the following finite difference equation:
\begin{align}
	(D_t P_{i})^{n} - \frac{\sigma^2}{2} (\Delta_h P^{n+1})_{i}
	- \mathcal B_i^{(\sum_j h P^{n+1}_j, U^{n})}(P^{n+1})
	& = 0,
	\label{eq:FPcond-H-scheme-discrete-edo}
	\\
	& \qquad i \in \{1,\dots,N_h-1\}, n \in \{0, \dots, N_T-1\},
	\notag
	\\
	P^{n}_{i} = 0, & \qquad i \in \{0, N_h\}, n \in \{ 1, \dots, N_T\}, 
	\label{eq:FPcond-H-scheme-discrete-bc}
	\\
	P^{0}_{i} = p_0(x_i), & \qquad i \in \{0, \dots, N_h\}.
	\label{eq:FPcond-H-scheme-discrete-initial}
\end{align}

\paragraph{\textbf{Numerical method.} } 

An interesting option for solving~\eqref{eq:time-HJB}--\eqref{eq:time-FP} is to use an Augmented Lagrangian method as was done in \cite{AL2016b} for mean field type control problems, but we have not done this yet.
One could also try to solve directly the whole system~\eqref{eq:time-HJB}--\eqref{eq:time-FP} using for example Newton method, see~\cite{MR3135339} in the context of MFG. However, equations~\eqref{eq:HJBcond-H-scheme-discrete-edo} and~\eqref{eq:FPcond-H-scheme-discrete-edo} involve some non-local terms in $P$ and $U$, and hence one would need to invert full matrices, which would probably be very time consuming. 
For this reason we instead use the following iterative method, where the solution computed at a given iteration is used to compute the non-local terms involved in the next iteration:

\begin{enumerate}
	\item Start with a guess $(P^{(0)},U^{(0)})$; set $k \leftarrow 0$.
	\item Repeat the following steps
		\begin{enumerate}
			\item Compute $U^{(k+1)}$, solution to the following modified version of~\eqref{eq:HJBcond-H-scheme-discrete-edo}--\eqref{eq:HJBcond-H-scheme-discrete-final},
			\begin{equation}
			\label{eq:HJB-discrete-kp1}
			\left\{\quad 
			\begin{aligned}
			- (D_t U_{i})^{n} - \frac{\sigma^2}{2} (\Delta_h U^{n})_{i}
				&+ \widetilde  H\left(x_i, \sum_j h P_j^{(k),n+1}, [\grad_h U^n]_i\right)
				= \cF(P^{(k),n+1},U^{(k),n})_i ,
			\\
			&   i \in \{1,\dots,N_h-1\}, n \in \{0, \dots, N_T-1\},
			\\
			U^{n}_{i} = 0, \qquad &i \in \{0, N_h\}, n \in \{0, \dots,  N_T-1\}, 
			\\
			U^{N_T}_{i} = \cG(P^{(k),N_T})_i, \qquad &i \in \{ 0, \dots, N_h\}.
			\end{aligned}
			\right.
			\end{equation}
			\item Compute $P^{(k+1)}$, solution to the following modified version of~\eqref{eq:FPcond-H-scheme-discrete-edo}--\eqref{eq:FPcond-H-scheme-discrete-initial},
			\begin{equation}
			\label{eq:FP-discrete-kp1}
			\left\{\quad 
			\begin{aligned}
			(D_t P_{i})^{n} - \frac{\sigma^2}{2} (\Delta_h P^{n+1})_{i}
				& - \mathcal B_i^{(\sum_j h P^{(k),n+1},U^{(k+1),n})}(P^{n+1})
				= 0, 
			\\
			&i \in \{1,\dots,N_h-1\}, n \in \{0, \dots, N_T-1\},
			\\
			P^{n}_{i} = 0, \qquad & i \in \{0, N_h\}, n \in \{ 1, \dots, N_T\}, 
			\\
			P^{0}_{i} = p_0(x_i), \qquad & i \in \{0, \dots, N_h\}.
			\end{aligned}
			\right.
			\end{equation}
			\item If $||P^{(k+1)} - P^{(k)} ||_{\ell^2}$ and $||U^{(k+1)} - U^{(k)} ||_{\ell^2}$ are small enough, stop. 
				Otherwise set $k \leftarrow k+1$ and continue.
		\end{enumerate}
\end{enumerate}
To ensure convergence, it is sometimes helpful to use relaxation in the update rule, that is, denoting by $\tilde U^{(k+1)}$ the solution of~\eqref{eq:HJB-discrete-kp1}, we set $U^{(k+1)} = (1-\theta) U^{(k)} + \theta \tilde U^{(k+1)}$, for some $\theta \in (0,1)$, and similarly for $P^{(k+1)}$.

\paragraph{\textbf{Solving the finite difference equations.} } Equation~\eqref{eq:HJB-discrete-kp1} can be solved using backward time marching with Newton method at each time step. Equation~\eqref{eq:FP-discrete-kp1} can be solved using forward time marching and solving a linear system of equations at each time step.

In our numerical implementation, in order to solve the linear systems associated to~\eqref{eq:HJB-discrete-kp1}--\eqref{eq:FP-discrete-kp1}, we have used the open source library \texttt{UMFPACK}~\cite{MR2075981} which contains an Unsymmetric MultiFrontal method for solving linear systems.

\subsection{Numerical results}
\label{sec:numres-time}

\subsubsection{Case 1: a one dimensional problem with $T=0.2$}
\label{sec:time-longT}
We present here results in dimension $1$ with a small time horizon: we take $\Omega = (0,1)$ and $T=0.2$. The cost functional is given by~\eqref{eq:def-time-J}, with 
$$
	L(x,b) = \frac{1}{2}|b|^2, \qquad \Psi[p] = \int_\Omega p(x) g_T(x) dx, \qquad g_T(x) = -\frac{1}{2} \exp\left( \left(x- 0.7\right)^2 / \left(\tfrac{1}{5}\right)^2 \right)
$$
and $\Phi \equiv 0, \epsilon = 0$, for $x \in \RR, b \in \RR, p \in L^2(\Omega; \RR)$. The initial distribution has a density given by
$$
	p_0(x) = \frac{1}{Z_0} \max\left\{0, \exp\left( \left(x - \tfrac{1}{4}\right)^2 / (\tfrac{1}{10})^2 \right) - 0.05 \right\}
$$
where $Z_0$ is the appropriate constant ensuring that $\int p_0 = 1$. In this case, the terminal condition for $u$ in~\eqref{eq:time-HJB} reads
\begin{equation}
\label{eq:terminal-cond-u-g}
	u(T,x) = \frac{g_T(x)} {\int_\Omega p(T,y) dy} - \frac{\int_\Omega g_T(y) p(T,y) dy} {\left(\int_\Omega p(T,y) dy\right)^2}.
\end{equation}
The graphs of $p_0$ and $g_T$ are shown in Figure~\ref{fig:shortT-gTp0}. The evolution of $p$ and $u$ is shown in Figure~\ref{fig:shortT-evol-pu} for $\sigma=0.8$. Since the goal is to minimize the cost functional, the mass moves towards the minimum of $g_T$.

The numerical convergence for this test case is illustrated by Figure~\ref{fig:time-conv-iter}. We have stopped the iterative procedure when the normalized $\ell^2$ distance between two successive approximations of $p$ and $u$ were both smaller than $10^{-6}$, where the (time-space) normalized $\ell^2$ norm is defined, for a vector $V \in \RR^{(N_h+1)\times(N_T+1)}$, by
$$
	\| V \|_{\ell^2} = \left( h \, \Delta t  \sum_{n=0}^{N_T} \sum_{i=0}^{N_h} |V_i^{n}|^2 \right)^{1/2}.
$$

For the results presented in Figures~\ref{fig:shortT-gTp0}--\ref{fig:time-conv-iter}, we have used $h = 5 \times 10^{-4}$ and $\Delta t = 2 \times 10^{-4}$.

\begin{center}
  \includegraphics[width=0.48\linewidth]{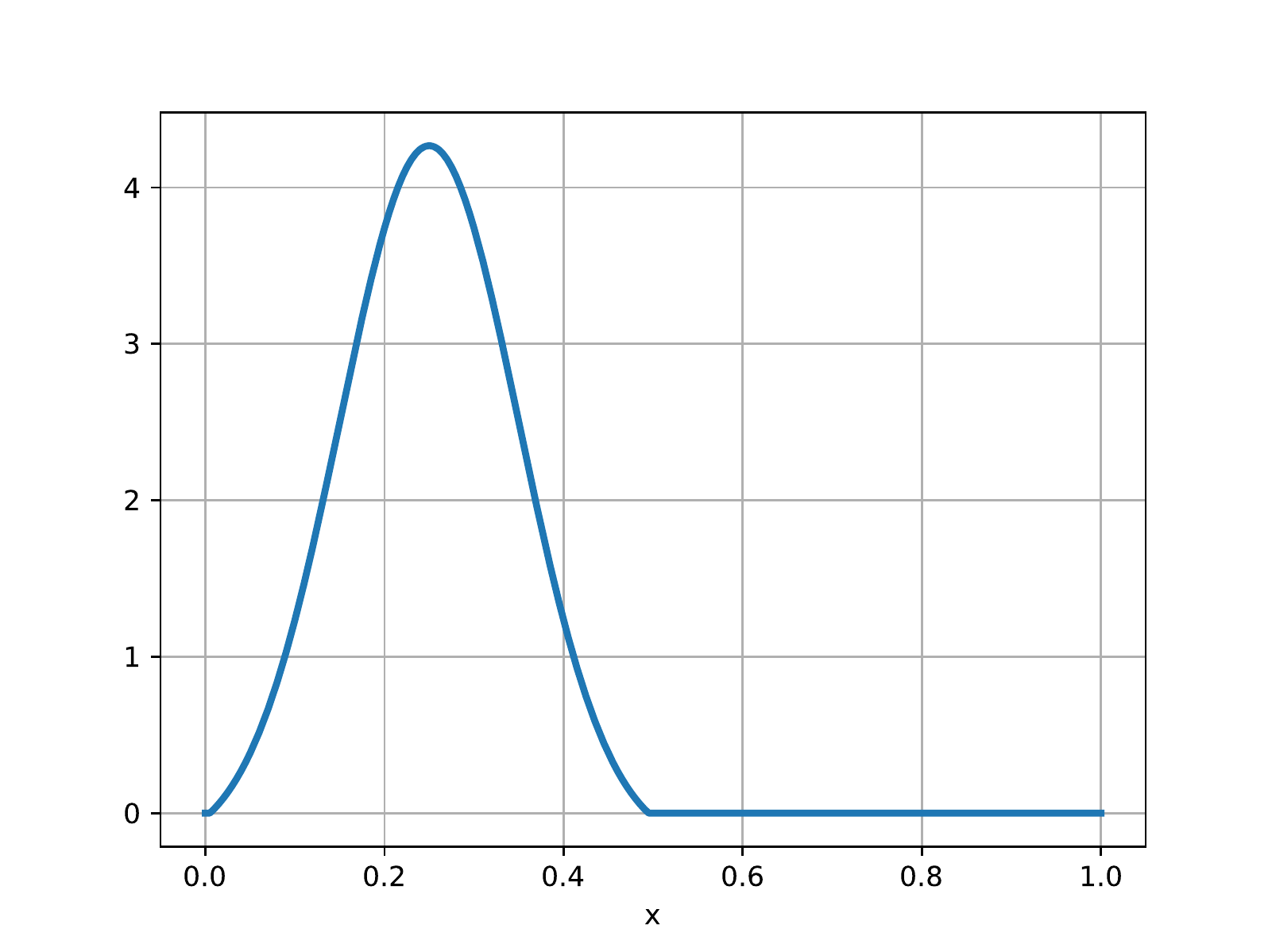}
  \includegraphics[width=0.48\linewidth]{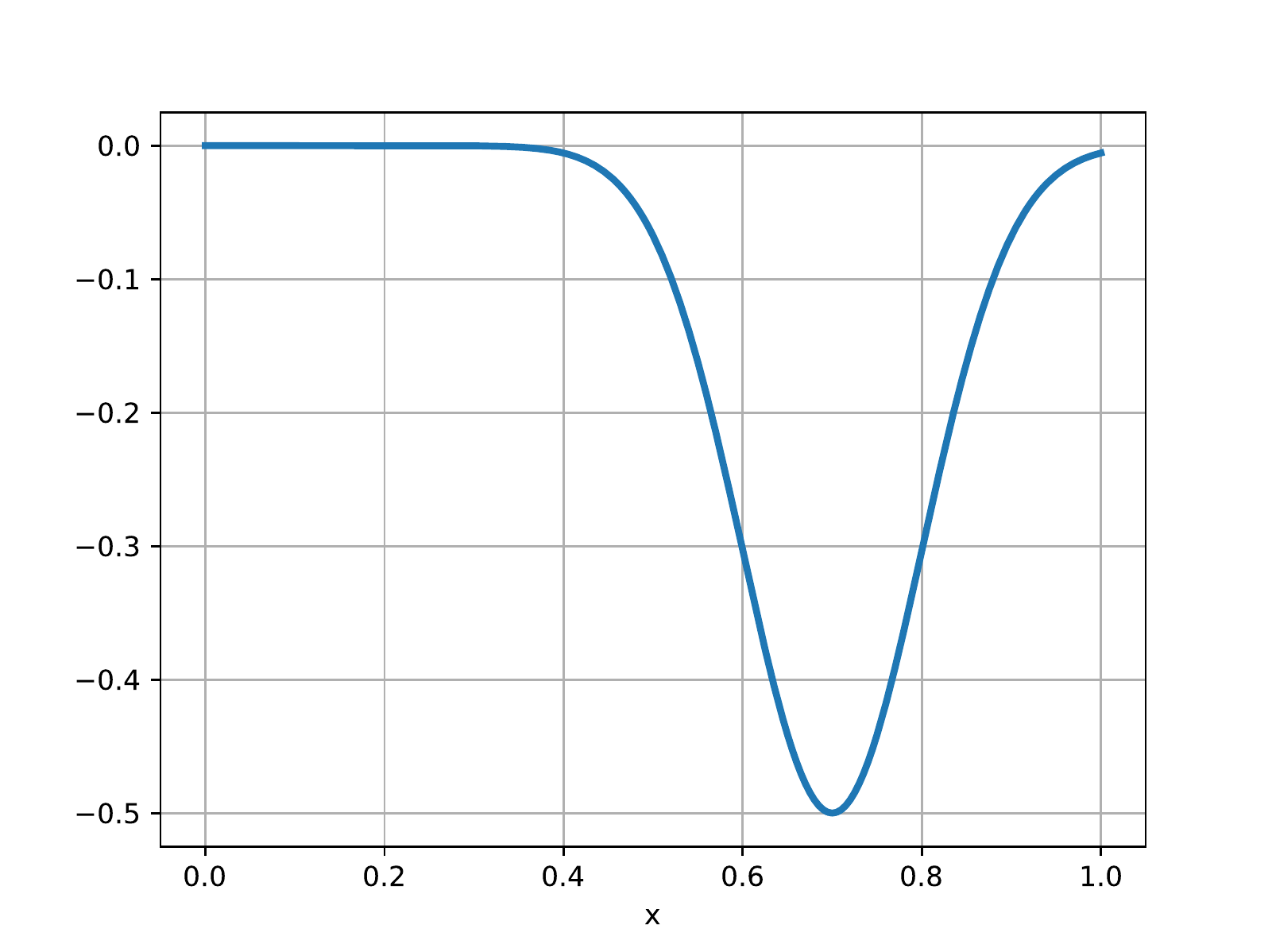}
  \captionof{figure}{Case 1: data of the problem: initial density $p_0$ (left) and terminal cost $g_T$ (right).}
\label{fig:shortT-gTp0}
\end{center}


\begin{center}
  \includegraphics[width=0.48\linewidth]{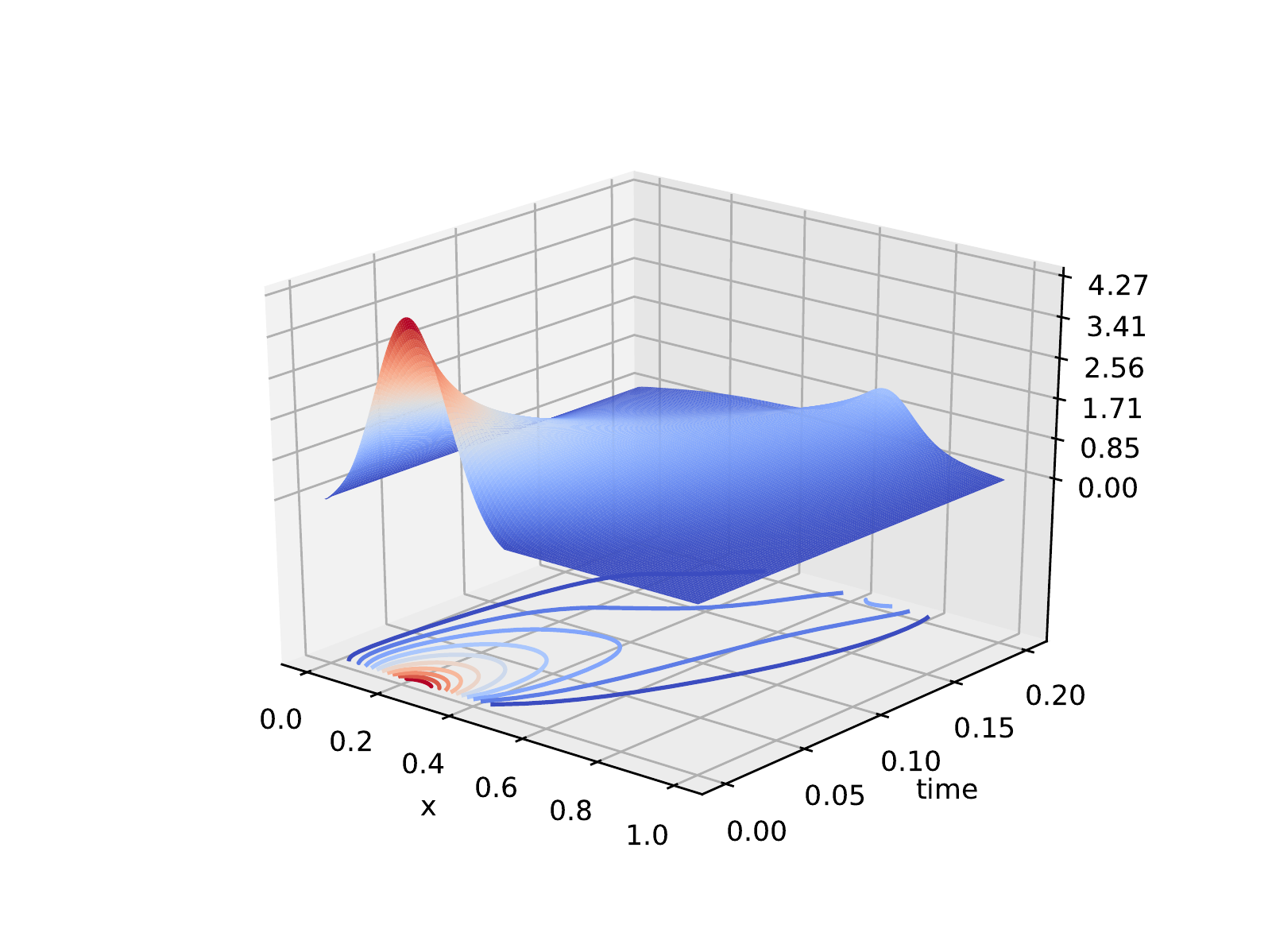}
  \includegraphics[width=0.48\linewidth]{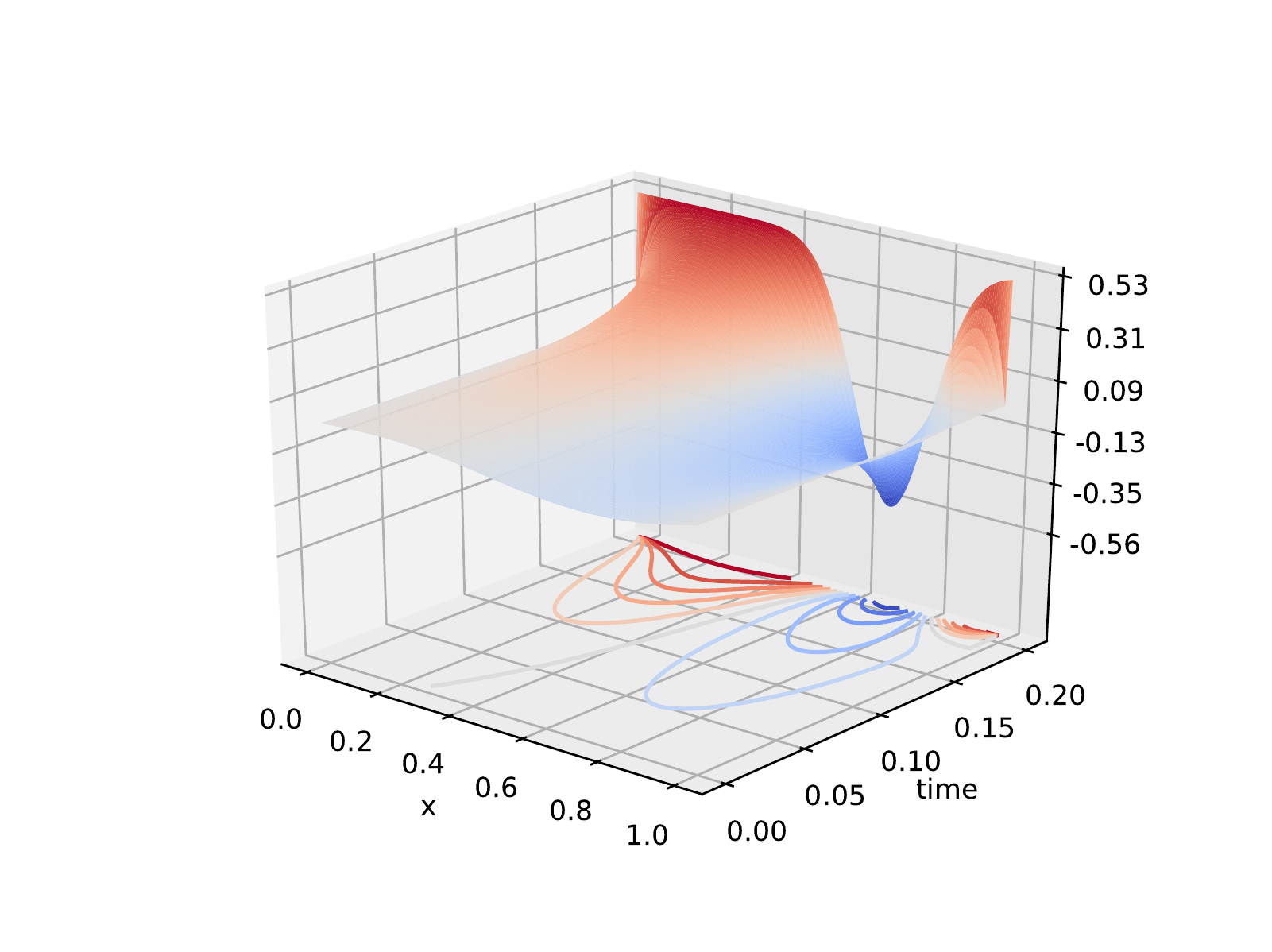}
  \captionof{figure}{Case 1: evolution of $p$ (left) and $u$ (right), with $T=0.2$.}
\label{fig:shortT-evol-pu}
\end{center}


\begin{center}
   \includegraphics[width=0.48\linewidth]{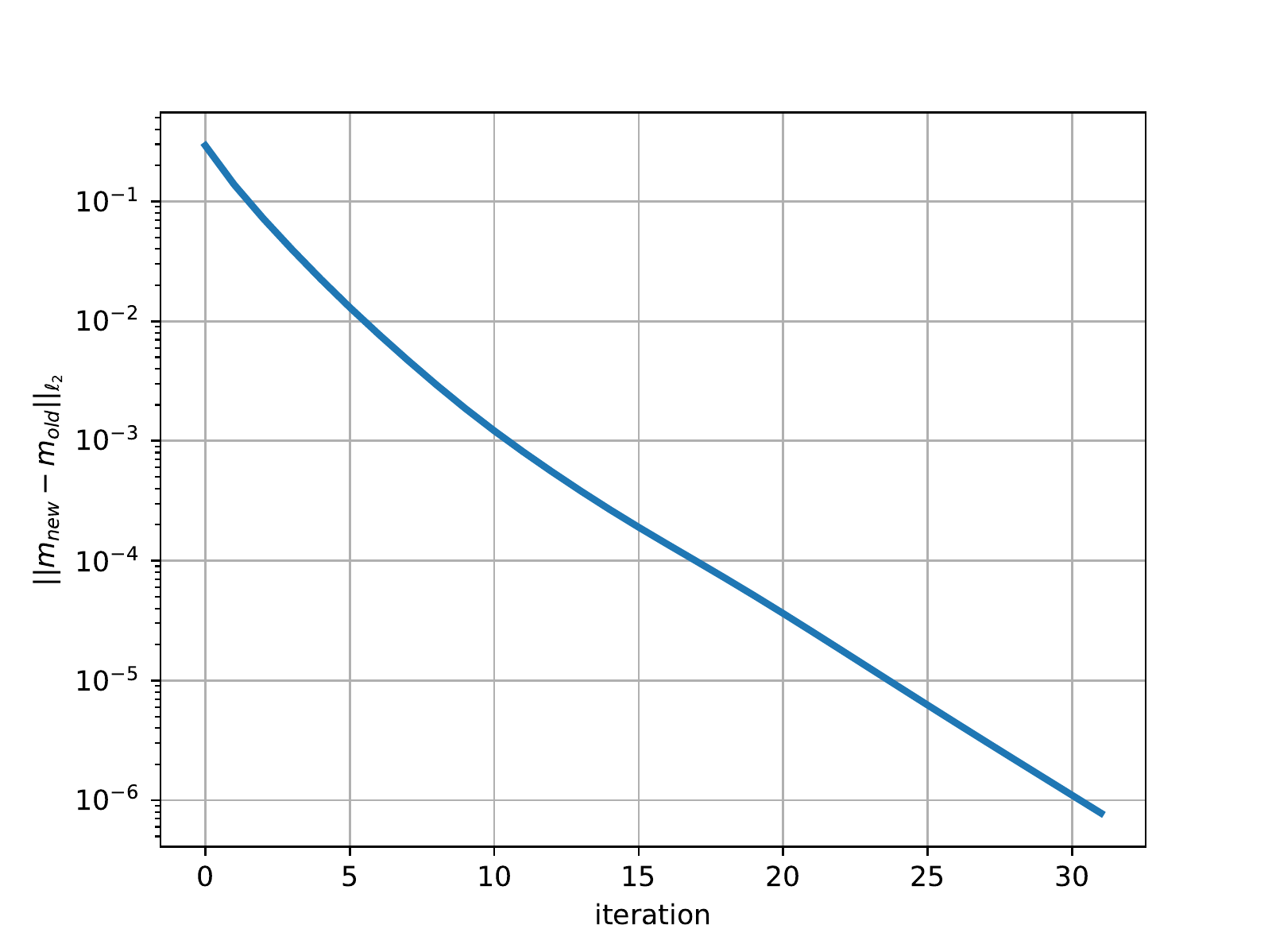}  \includegraphics[width=0.48\linewidth]{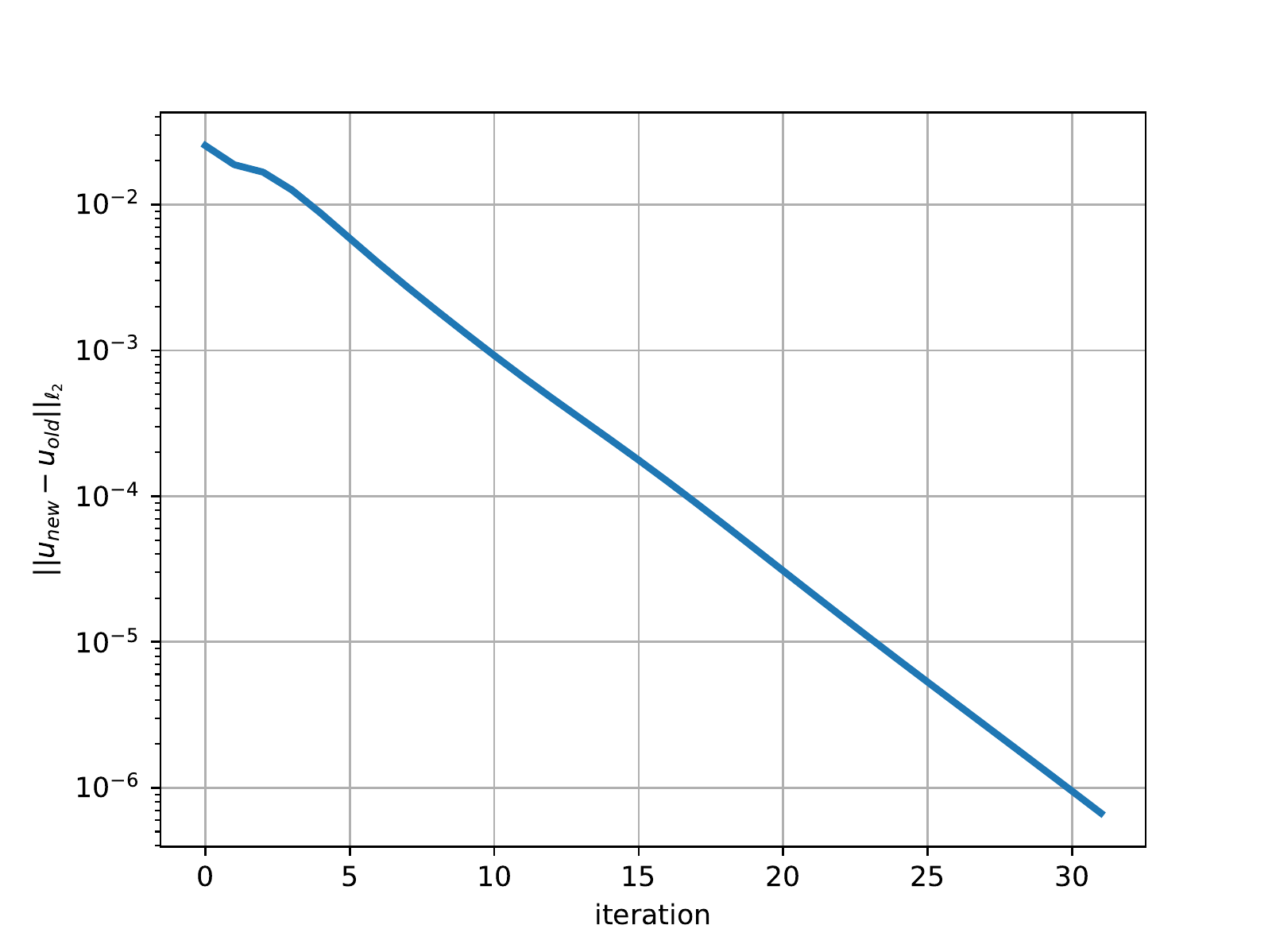}
 \captionof{figure}{Case 1: numerical convergence: normalized $\ell^2$ distance between two iterations, for $p$ (left) and $u$ (right), with respect to the number of iterations.}
\label{fig:time-conv-iter}
\end{center}

\subsubsection{Case 2: same problem with $T=2.0$}

Figure~\ref{fig:longT-evol-pu} displays the evolution of $u$ and $p$ with the same cost functions and initial distribution as above but with a much larger time horizon. The mass decays to $0$ at an exponential rate but has a certain structure even at the final time, as shown in Figure~\ref{fig:longT-mass-final}. The larger $T$, the smaller $\int_\Omega p(T,x) dx$ and hence the larger the final condition~\eqref{eq:terminal-cond-u-g}. This leads to numerical difficulties for very large time horizons. A possible approach is to rely on a continuation method, that is, to use the solution for a shorter time horizon to initialize the iterative procedure for a longer time horizon. 
However this method is time consuming since it requires solving a number of intermediate problems. We will come back to this matter in Section~\ref{sec:alternative-method}. For the results presented in Figures~\ref{fig:longT-evol-pu}--\ref{fig:longT-mass-final}, we have used $h = 5 \times 10^{-4}$ and $\Delta t = 2 \times 10^{-4}$.

\begin{center}
  \includegraphics[width=0.48\linewidth]{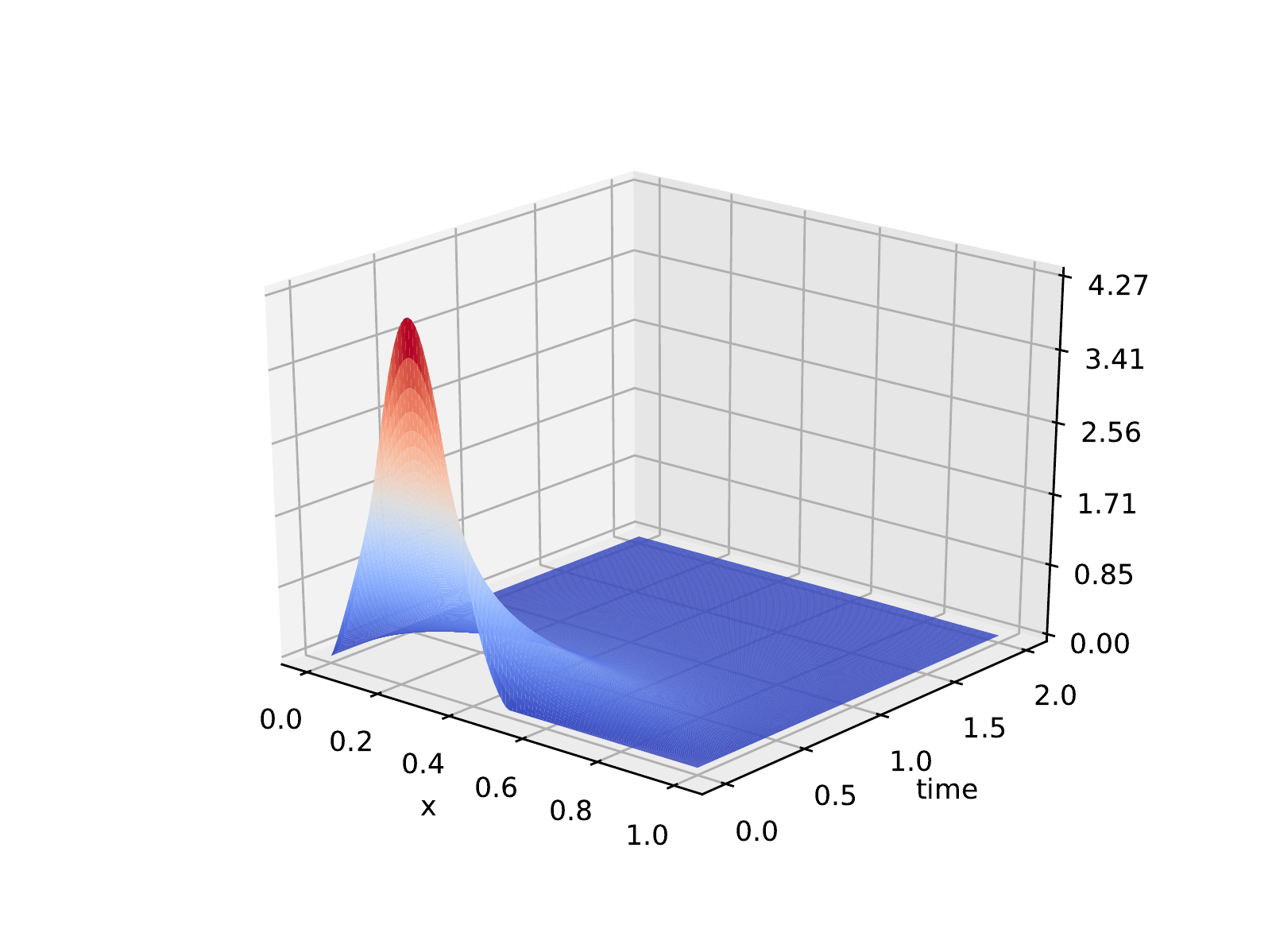}
  \includegraphics[width=0.48\linewidth]{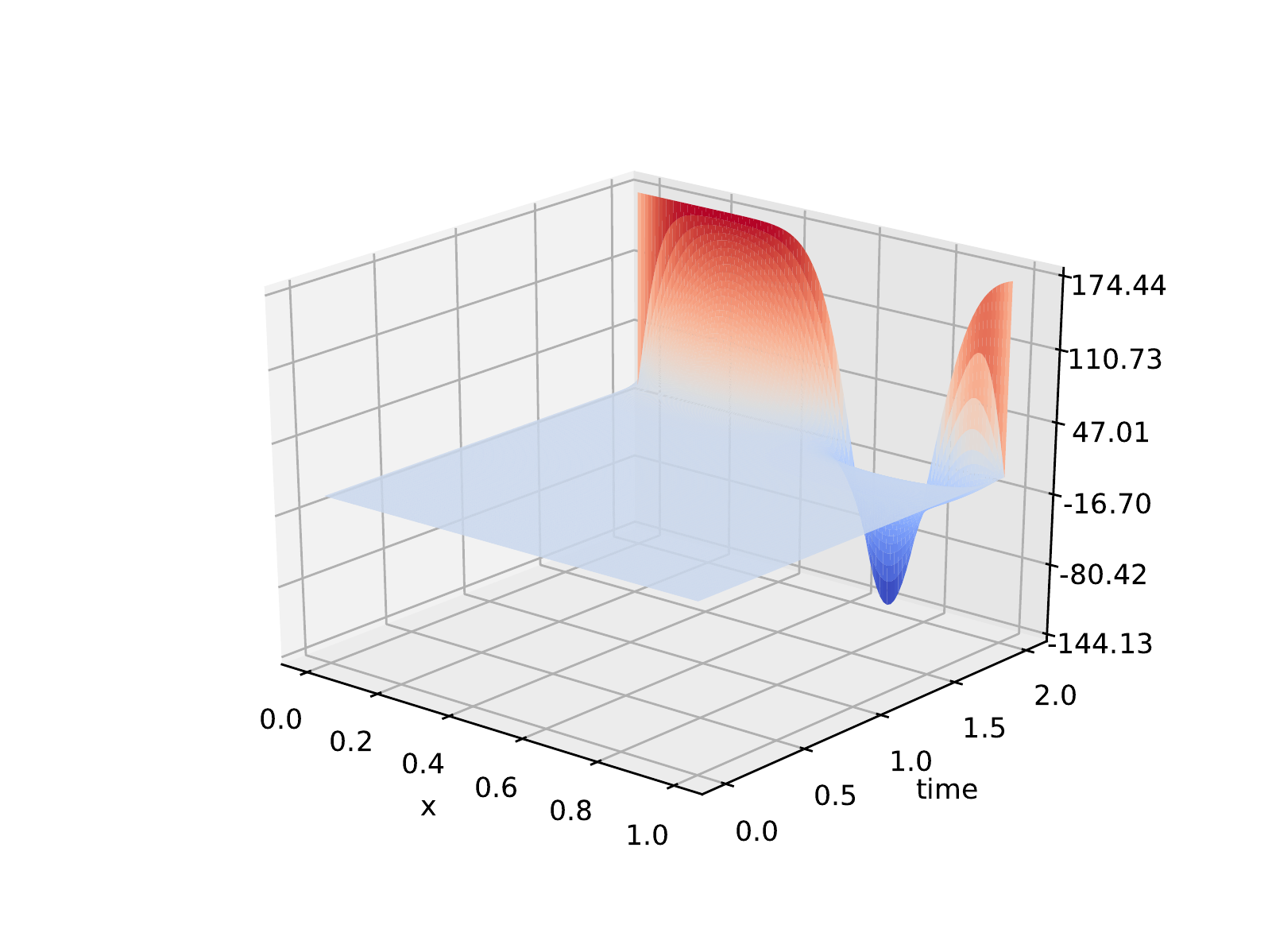}
  \captionof{figure}{Case 2: evolution of $p$ (left) and $u$ (right), with $T=2.0$.}
\label{fig:longT-evol-pu}
\end{center}


\begin{center}
   \includegraphics[width=0.48\linewidth]{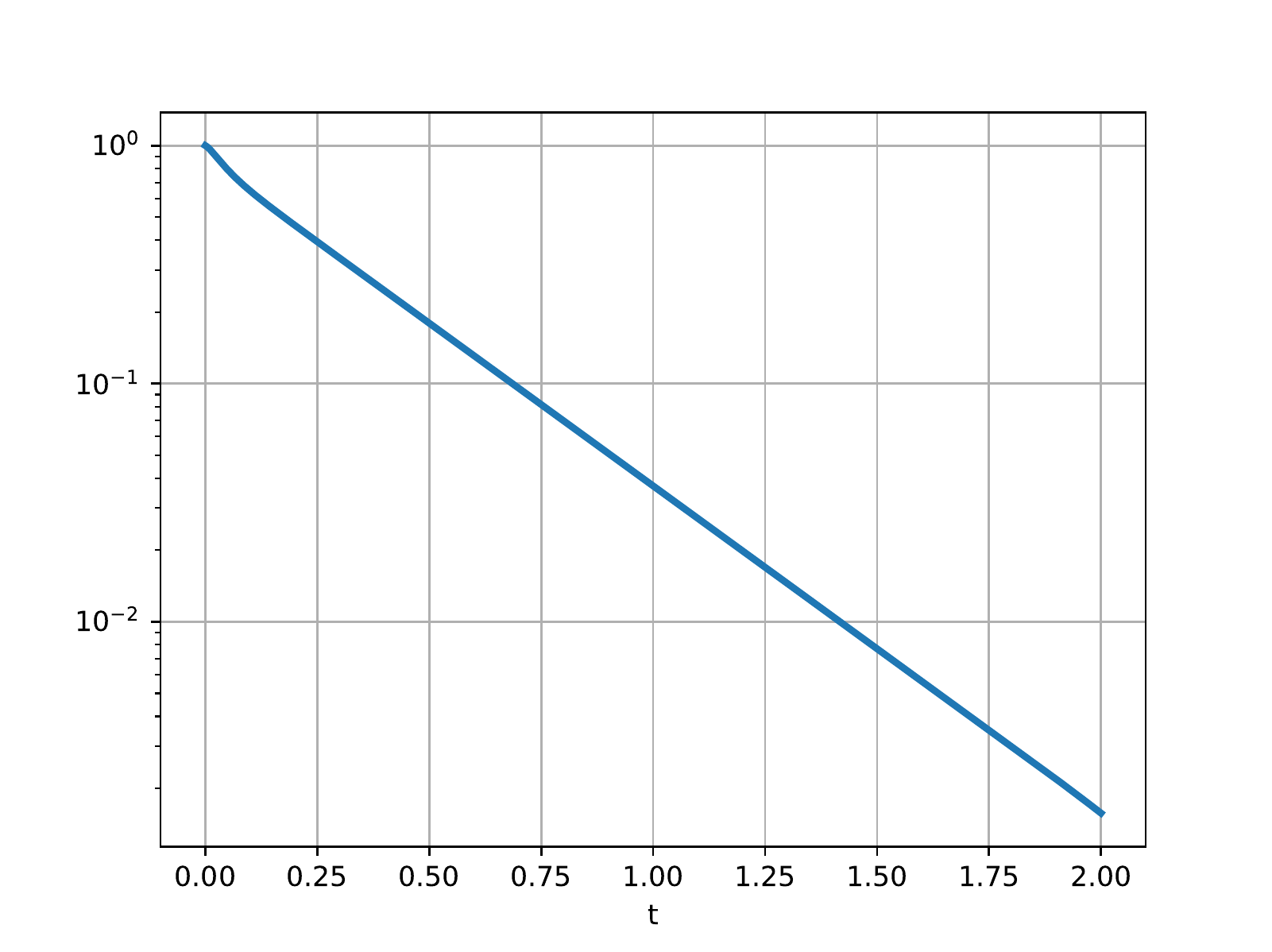}  \includegraphics[width=0.48\linewidth]{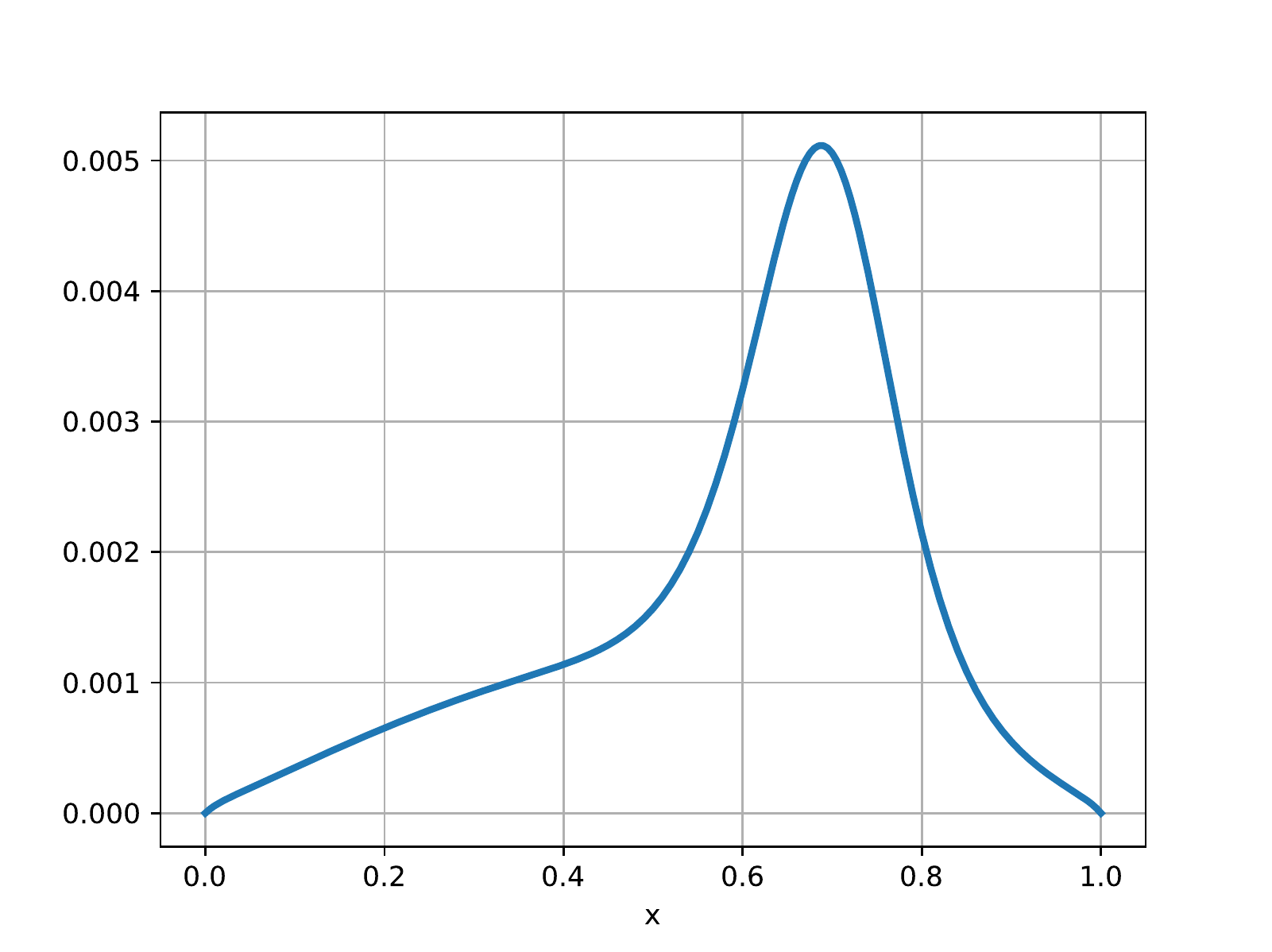}
 \captionof{figure}{Case 2: evolution of $t \mapsto \int_\Omega p(t,x) dx$  (left), and graph of $p(T,\cdot)$ at final time $T=2.0$ (right).}
\label{fig:longT-mass-final}
\end{center}

\subsubsection{Results in 2D}

The numerical method described in Section~\ref{sec:numerics-time} can be applied beyond dimension $1$. Here, we provide results for two test cases in dimension $2$. We take $\Omega = (0,1)^2$ and $T = 0.2$.  For the results presented in this paragraph, we have used $h = 1/80 = 0.0125$ in both dimensions of space, and $\Delta t = 5 \times 10^{-3}$.

\noindent\textbf{Case 3: First test case in dimension $2$.} The running cost is quadratic in the control. The initial distribution and the terminal cost are displayed in Figure~\ref{fig:2D-shortT-gTp0}: at the beginning the mass is concentrated around the center of the domain, but the final cost discourages to stay there. The evolution of the density is shown in Figure~\ref{fig:2D-shortT-evolp}.

\begin{figure}
\centering
\begin{subfigure}{.45\textwidth}
  \centering
  \includegraphics[width=\linewidth]{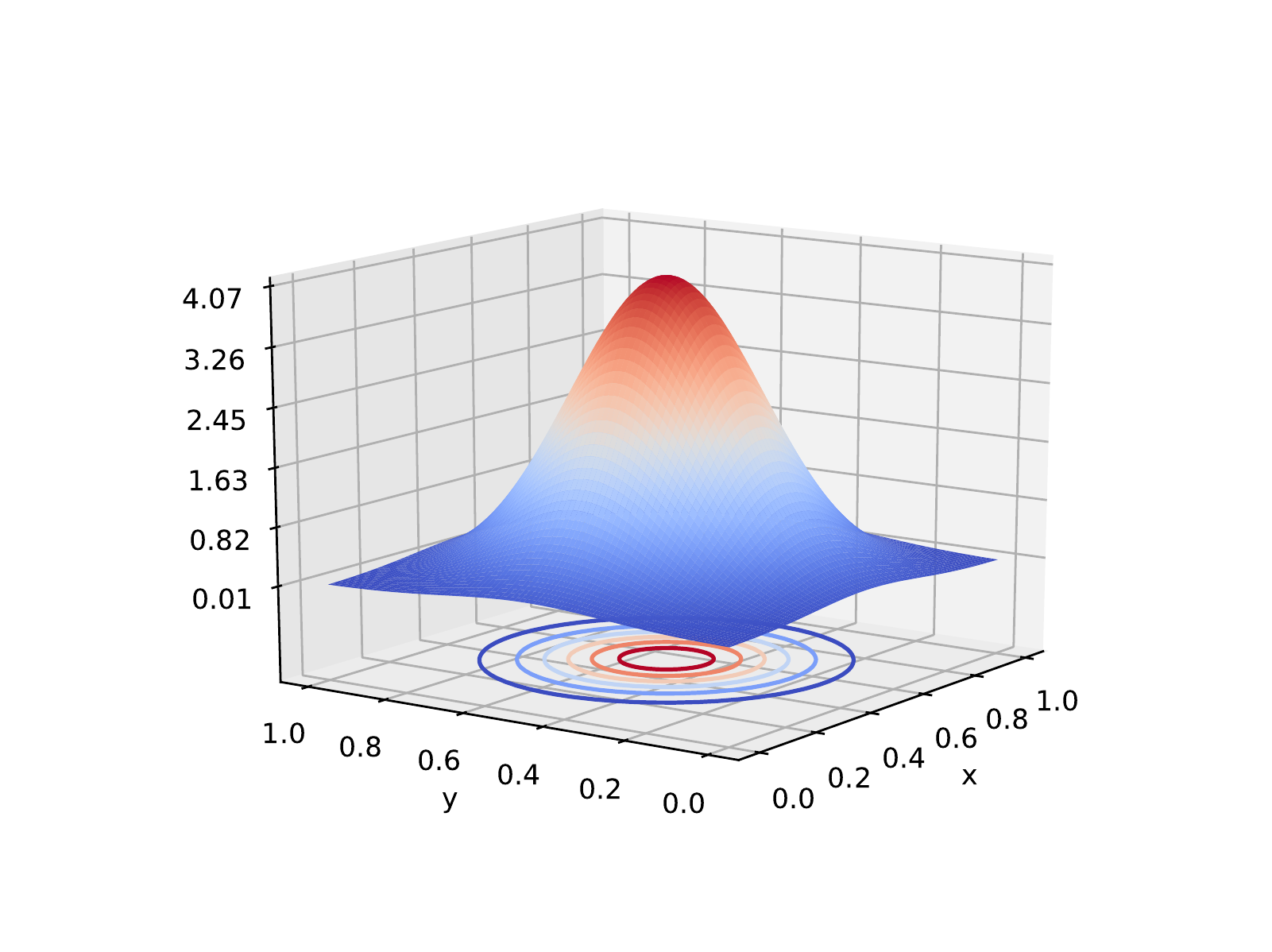}
\end{subfigure}%
\begin{subfigure}{.45\textwidth}
  \centering
  \includegraphics[width=\linewidth]{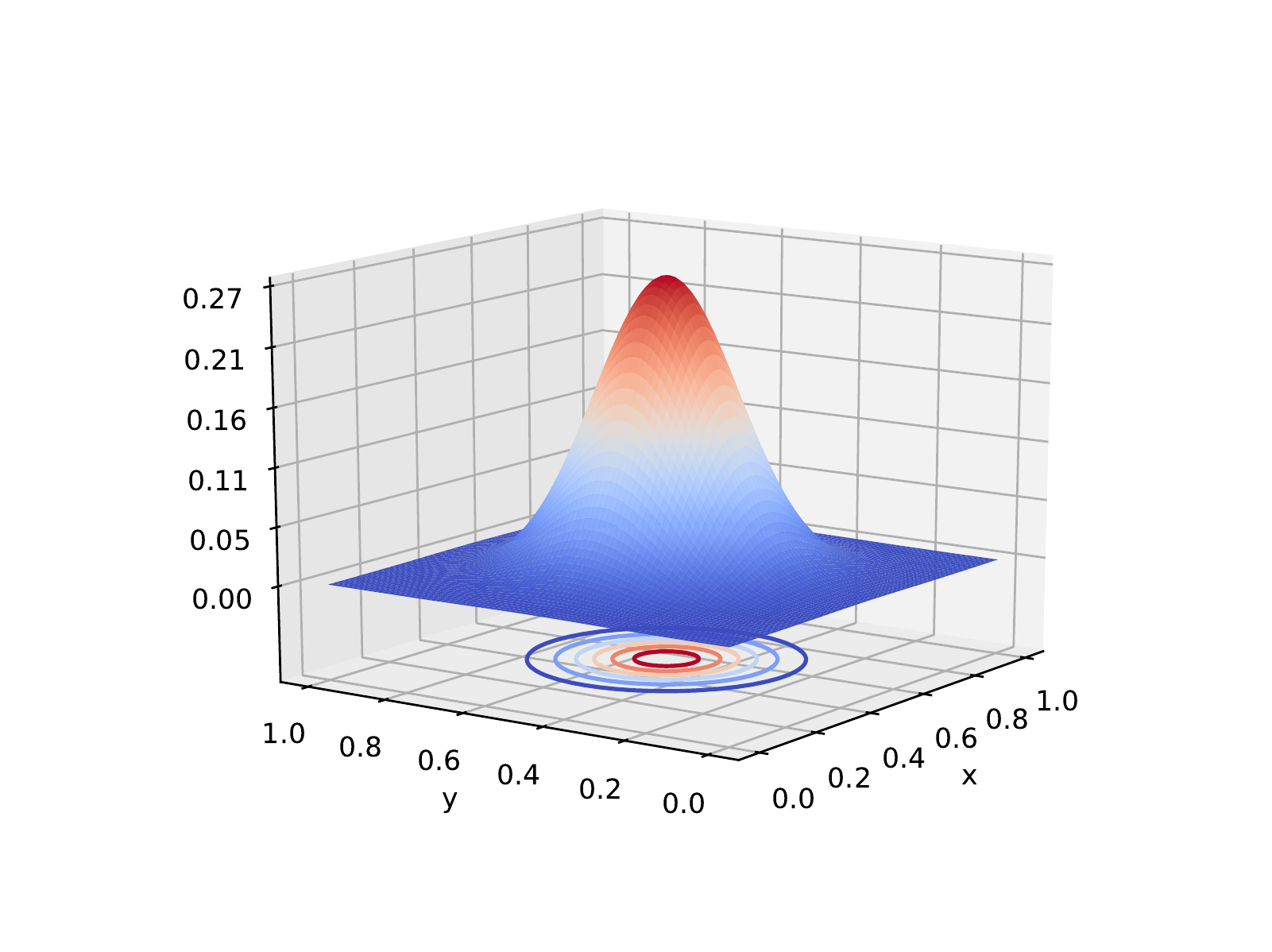}
\end{subfigure}
\caption{Case 3: data of the problem: initial density $p_0$ (left) and terminal cost $g_T$ (right).}
\label{fig:2D-shortT-gTp0}
\end{figure}

\begin{figure}
\centering
\begin{subfigure}{.45\textwidth}
  \centering
  \includegraphics[width=\linewidth]{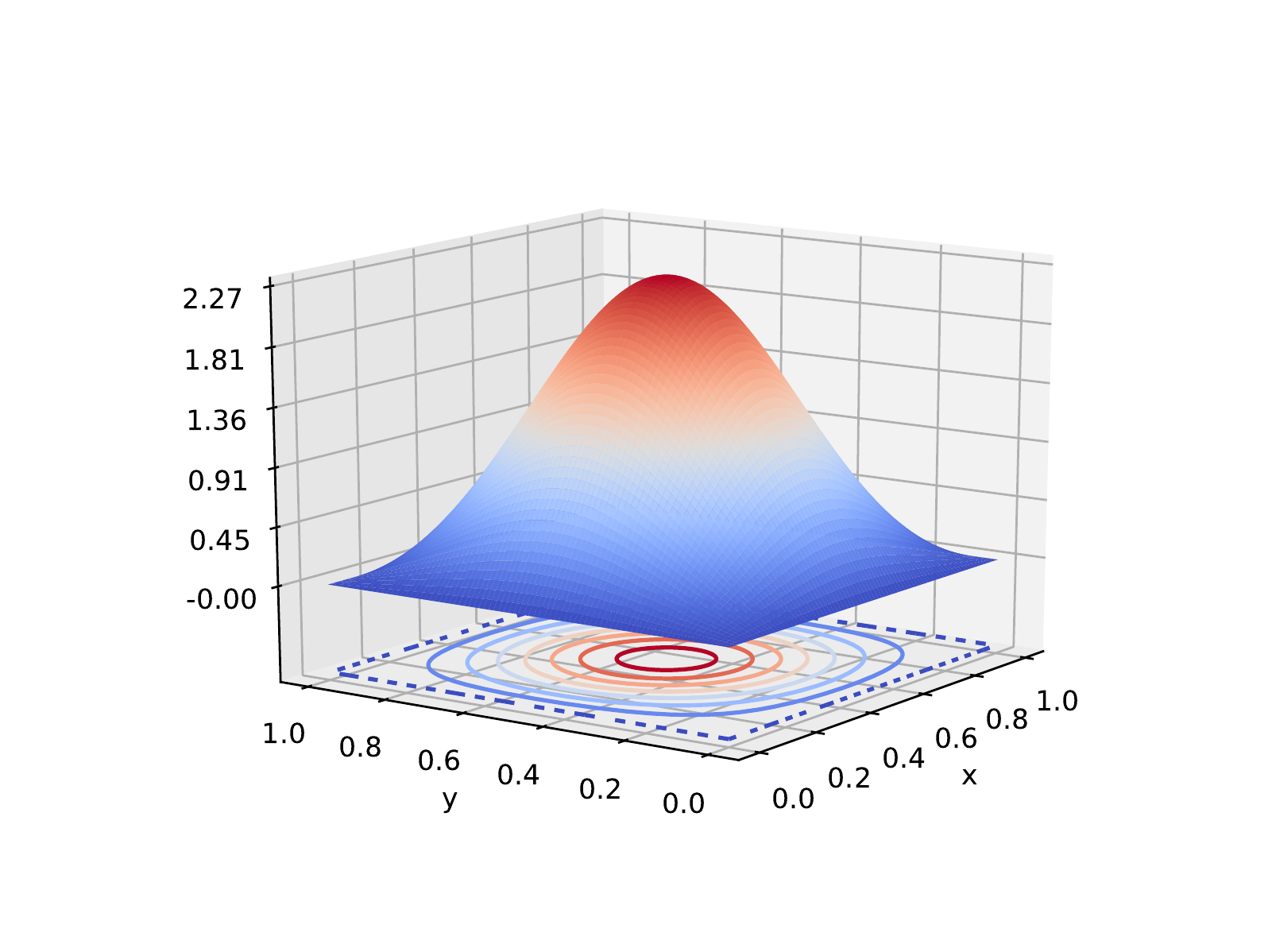}
  \caption*{$t = 0.05$}
\end{subfigure}%
\begin{subfigure}{.45\textwidth}
  \centering
  \includegraphics[width=\linewidth]{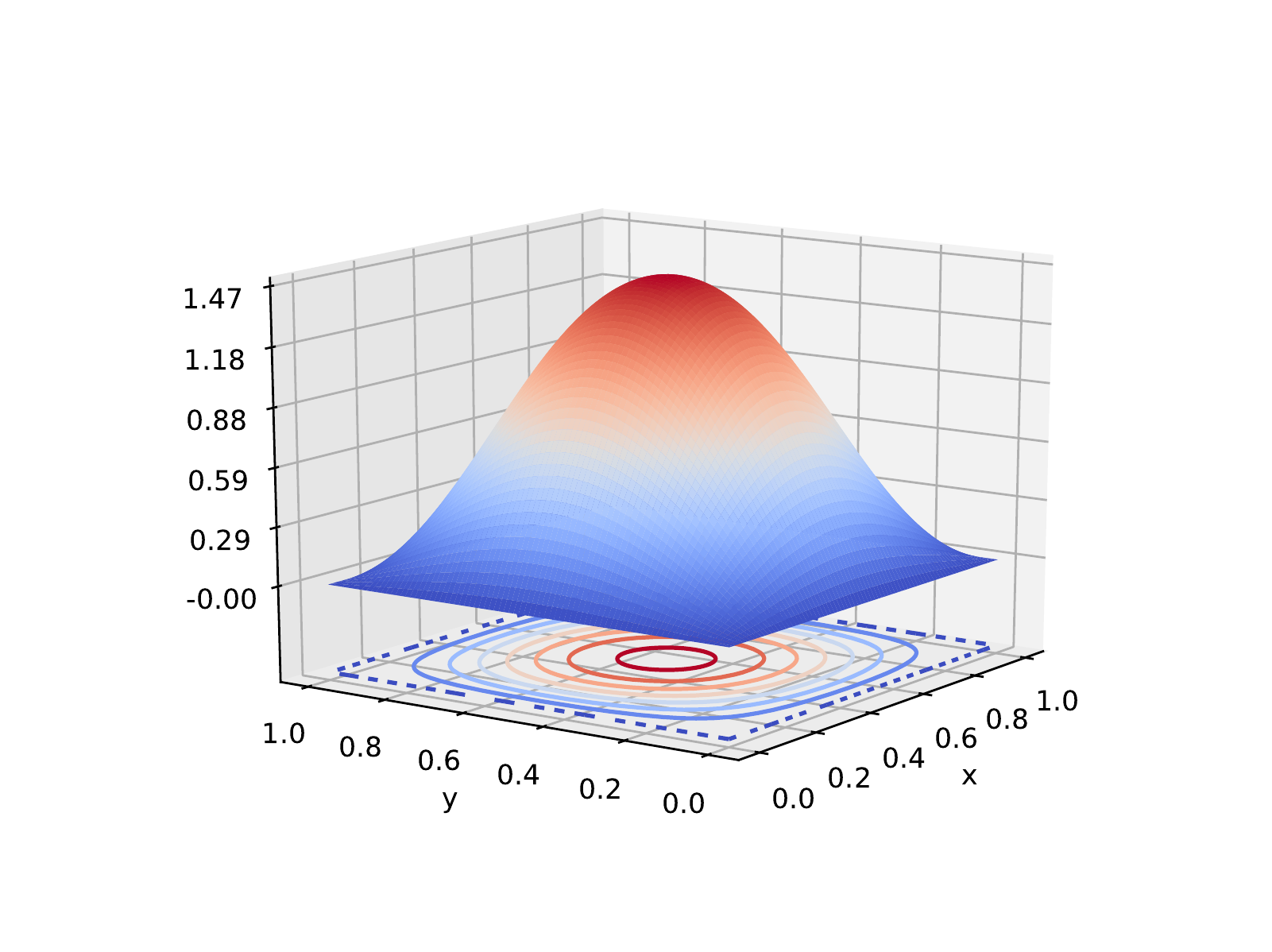}
  \caption*{$t = 0.1$}
\end{subfigure}
\\
\begin{subfigure}{.45\textwidth}
  \centering
  \includegraphics[width=\linewidth]{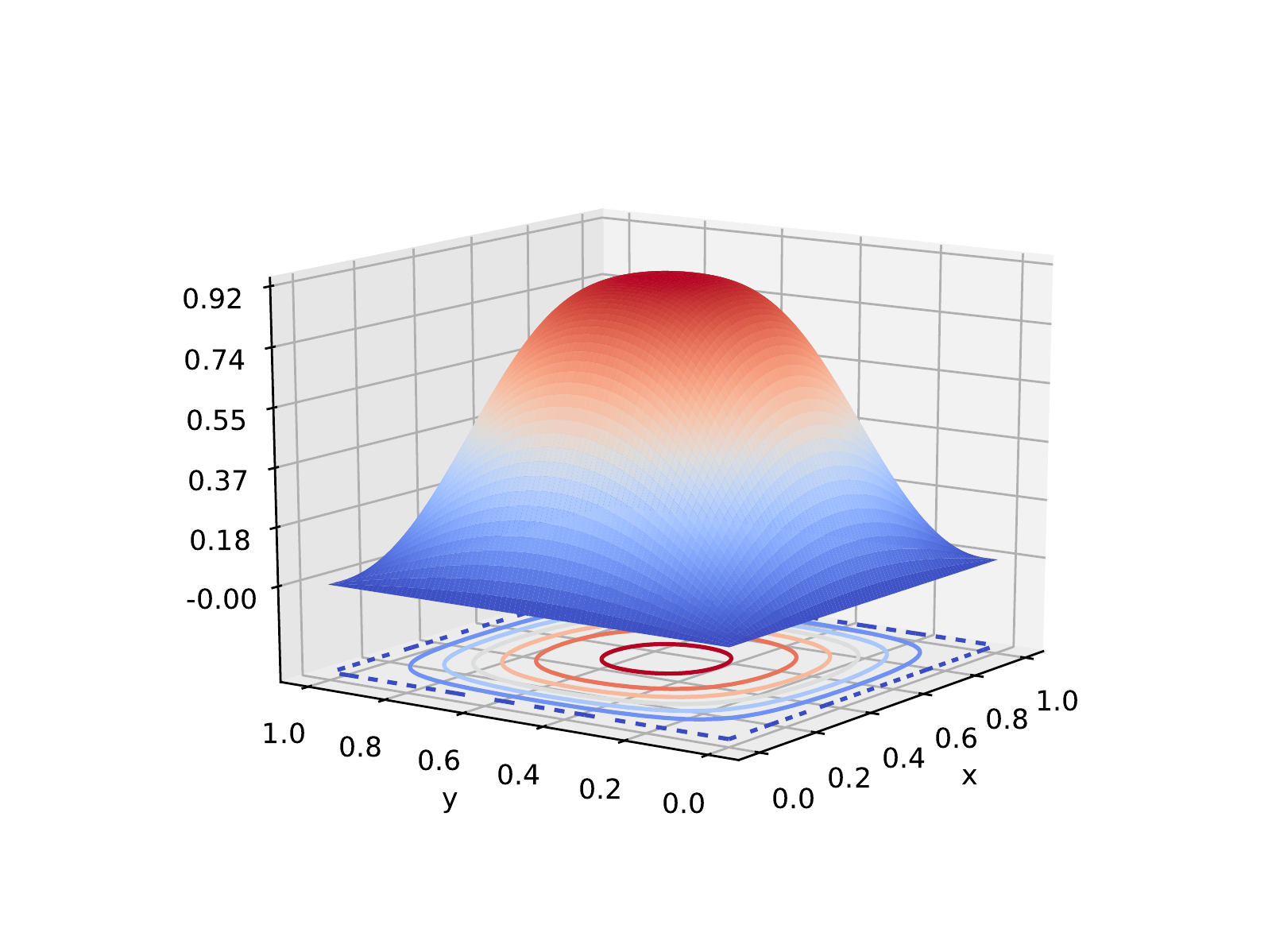}
  \caption*{$t = 0.15$}
\end{subfigure}%
\begin{subfigure}{.45\textwidth}
  \centering
  \includegraphics[width=\linewidth]{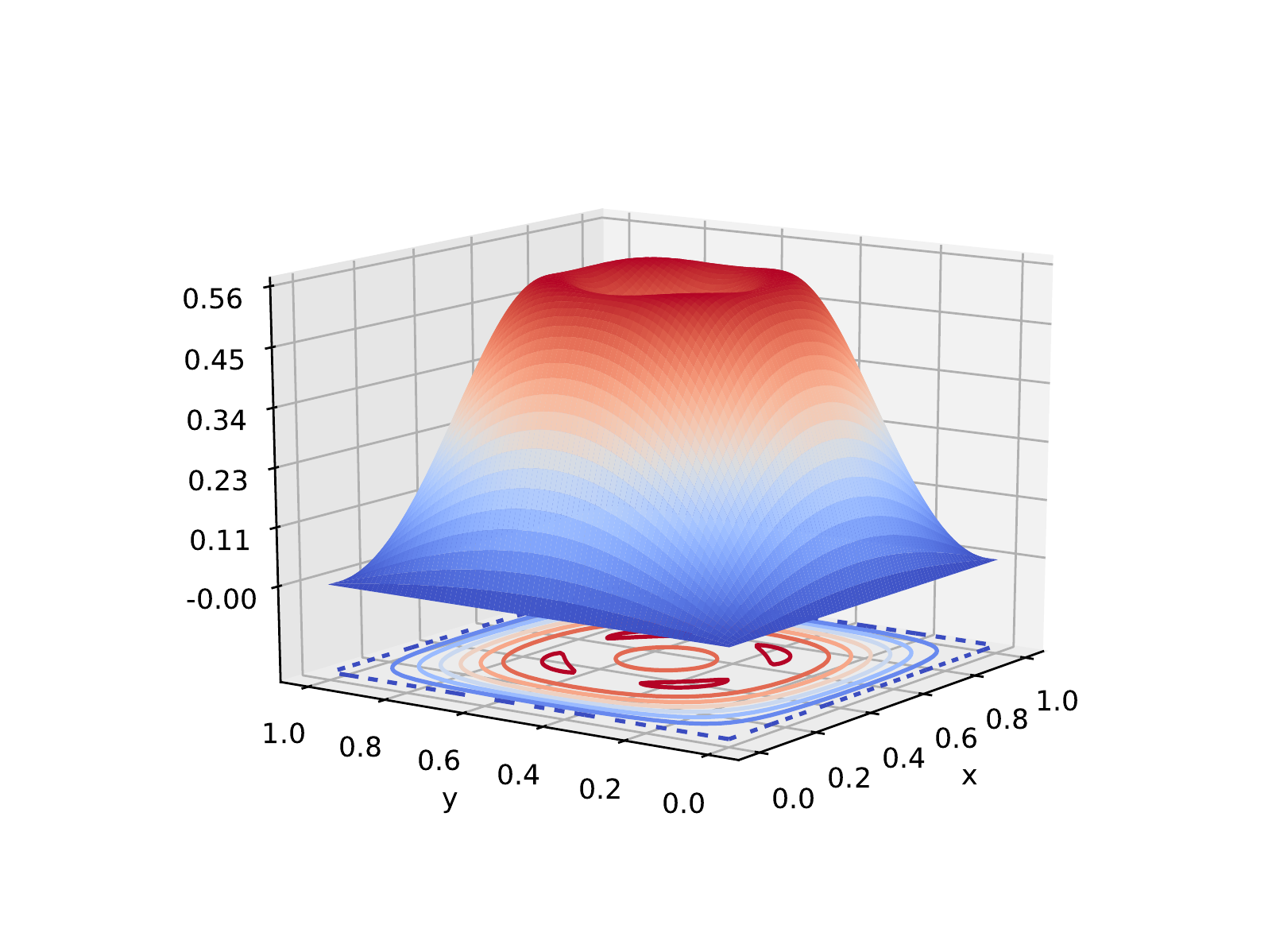}
  \caption*{$t = 0.2$}
\end{subfigure}
\caption{Case 3: evolution of the density.}
\label{fig:2D-shortT-evolp}
\end{figure}

\noindent\textbf{Case 4: Second test case in dimension $2$.} The running cost is quadratic in the control. The initial distribution and the terminal cost are displayed in Figure~\ref{fig:2D-B-shortT-gTp0}: at the beginning the mass is concentrated around the center of the domain, but the final cost attracts it towards two points. The evolution of the density is shown in Figure~\ref{fig:2D-B-shortT-evolp}. 

\begin{figure}
\centering
\begin{subfigure}{.45\textwidth}
  \centering
  \includegraphics[width=\linewidth]{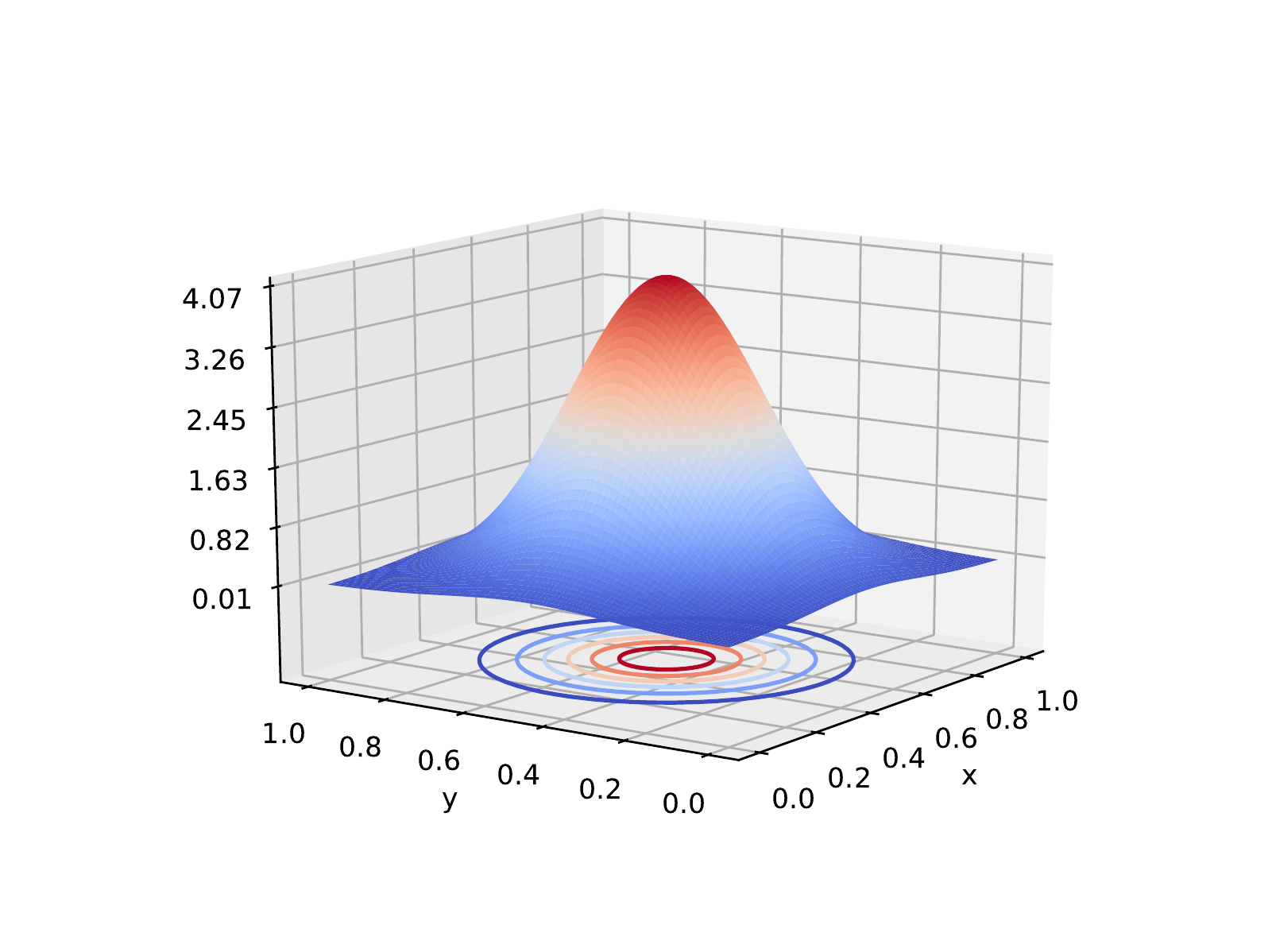}
\end{subfigure}%
\begin{subfigure}{.45\textwidth}
  \centering
  \includegraphics[width=\linewidth]{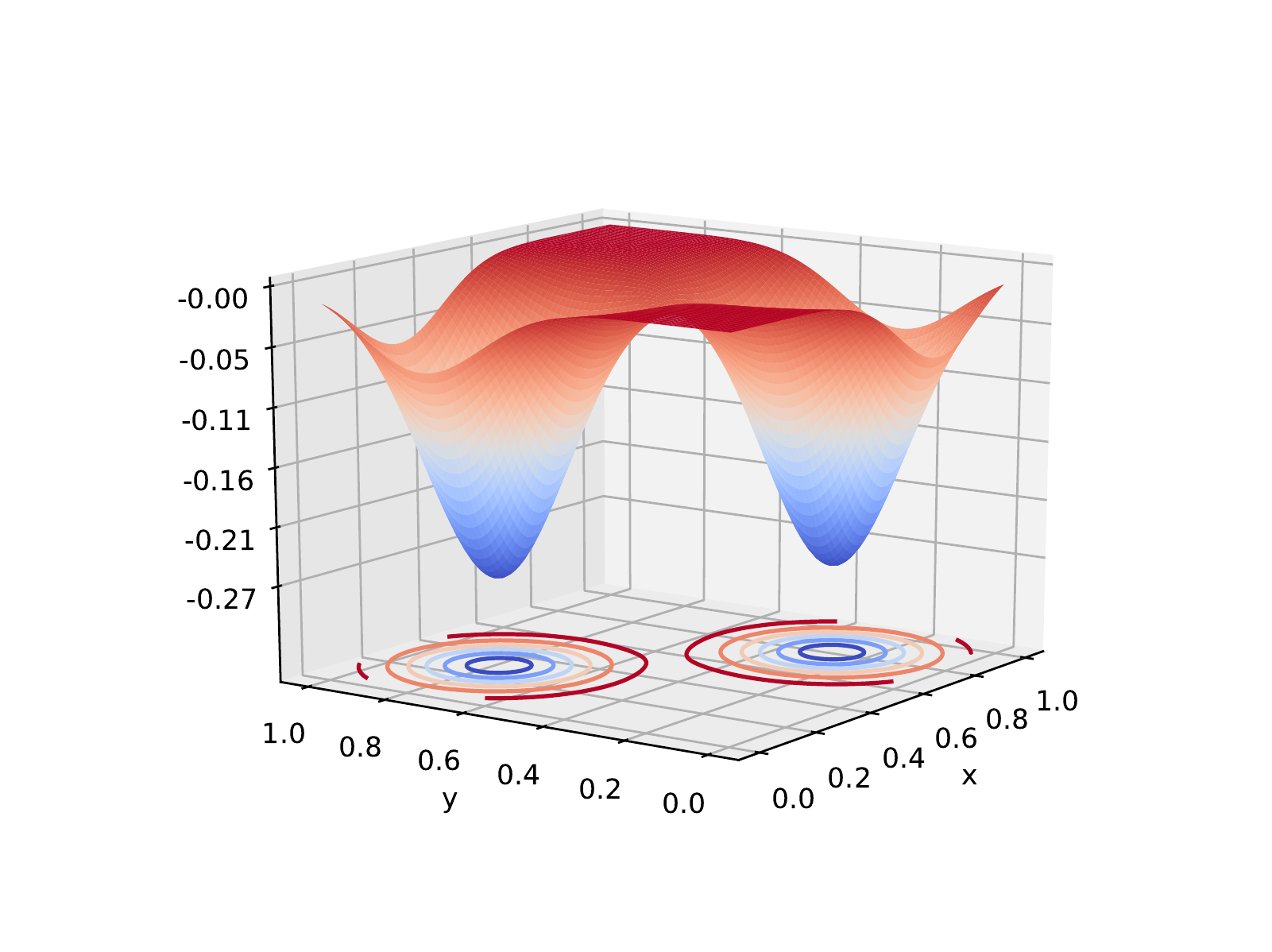}
\end{subfigure}
\caption{Case 4: Second test case in dimension $2$. Data of the problem: initial density $p_0$ (left) and terminal cost $g_T$ (right).}
\label{fig:2D-B-shortT-gTp0}
\end{figure}

\begin{figure}
\centering
\begin{subfigure}{.45\textwidth}
  \centering
  \includegraphics[width=\linewidth]{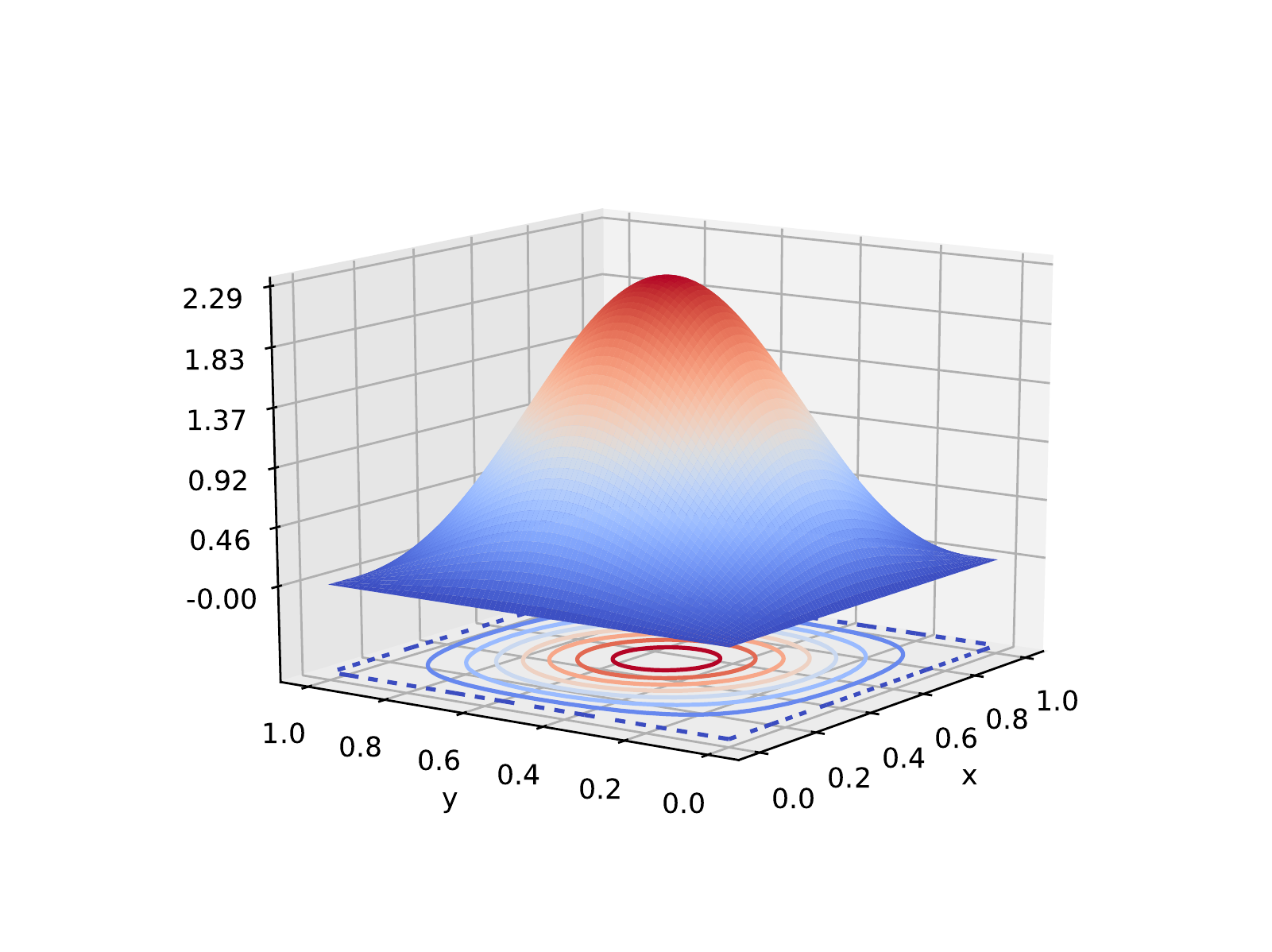}
  \caption*{$t = 0.05$}
\end{subfigure}%
\begin{subfigure}{.45\textwidth}
  \centering
  \includegraphics[width=\linewidth]{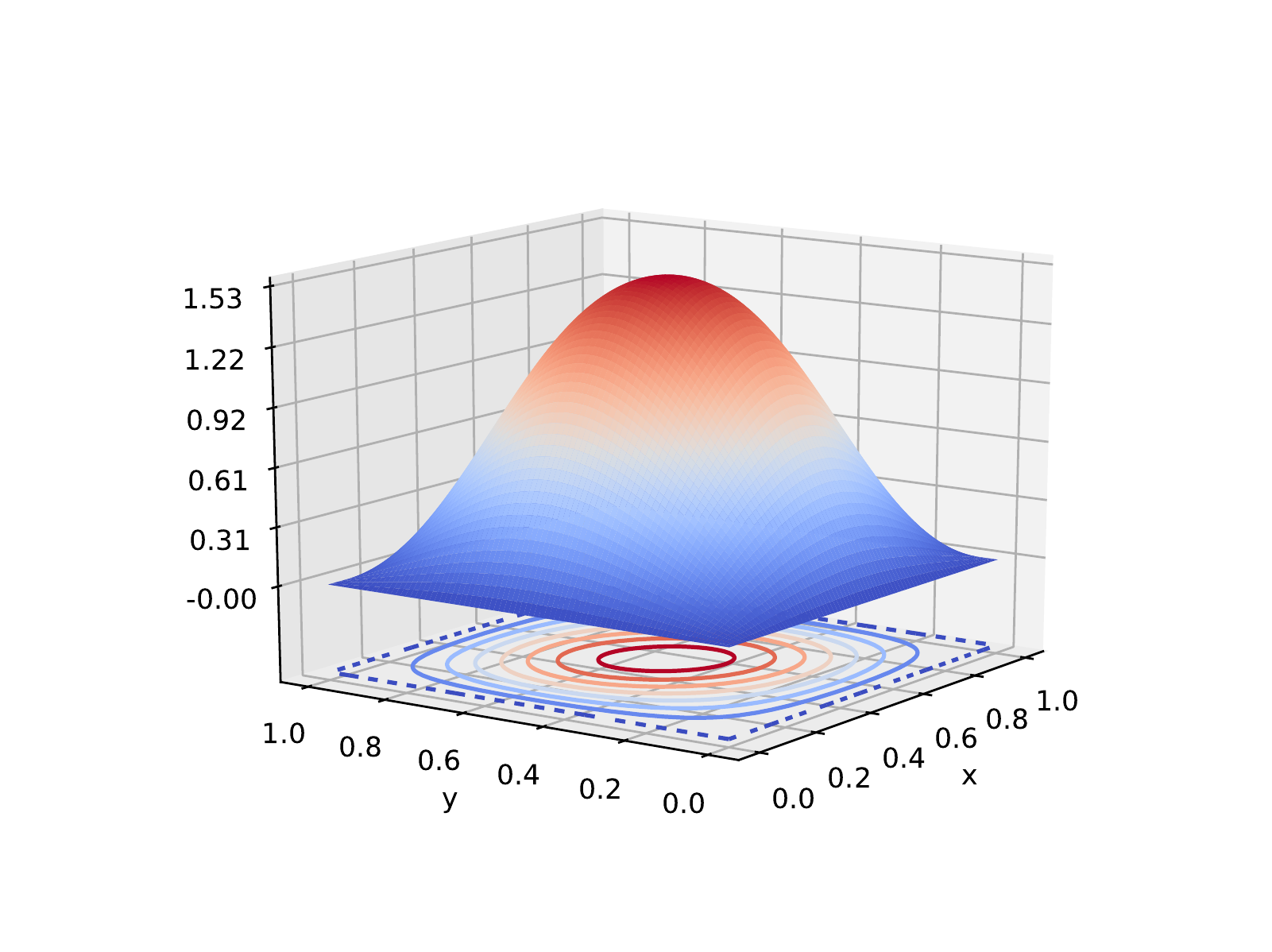}
  \caption*{$t = 0.1$}
\end{subfigure}
\\
\begin{subfigure}{.45\textwidth}
  \centering
  \includegraphics[width=\linewidth]{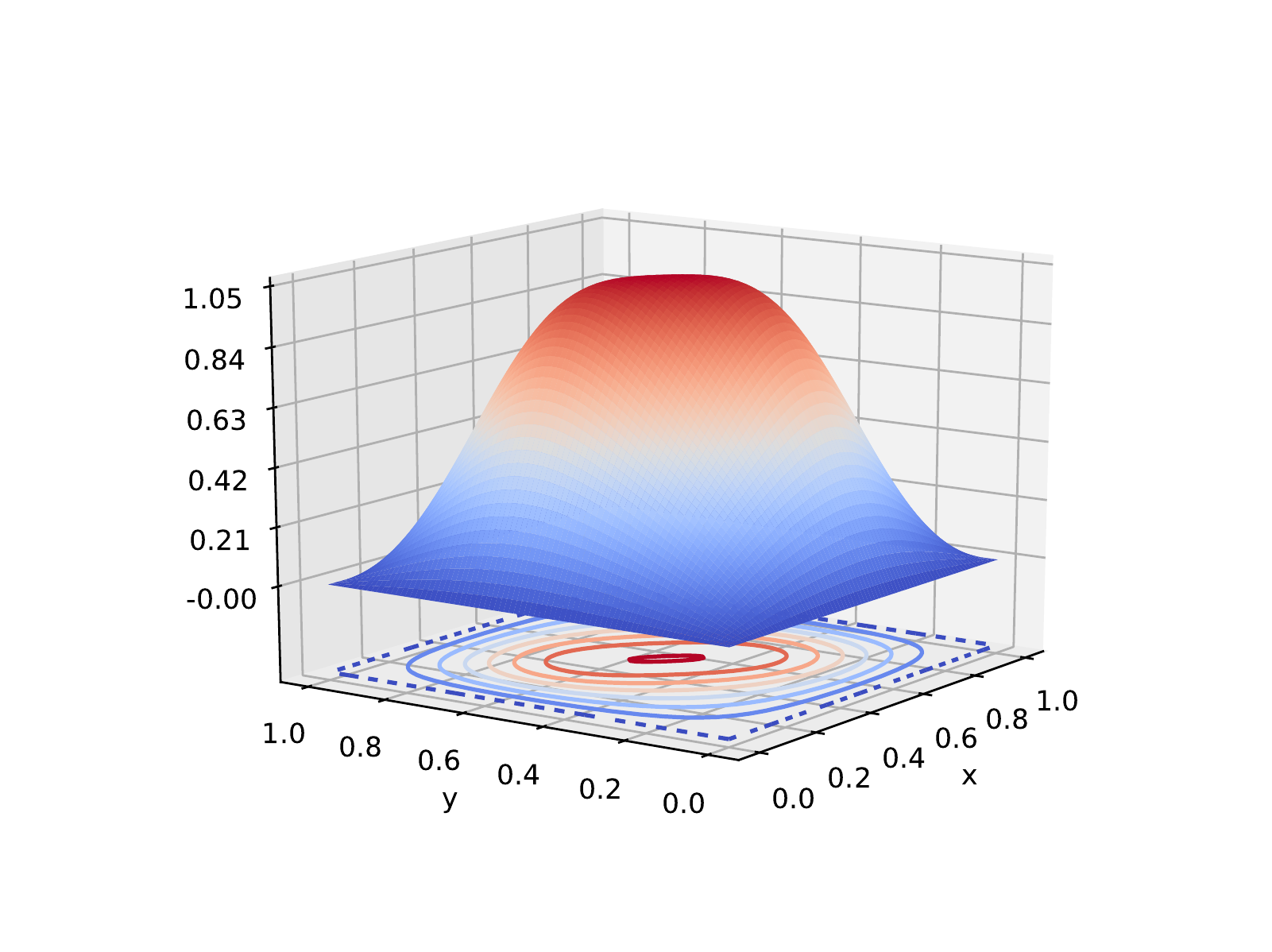}
  \caption*{$t = 0.15$}
\end{subfigure}%
\begin{subfigure}{.45\textwidth}
  \centering
  \includegraphics[width=\linewidth]{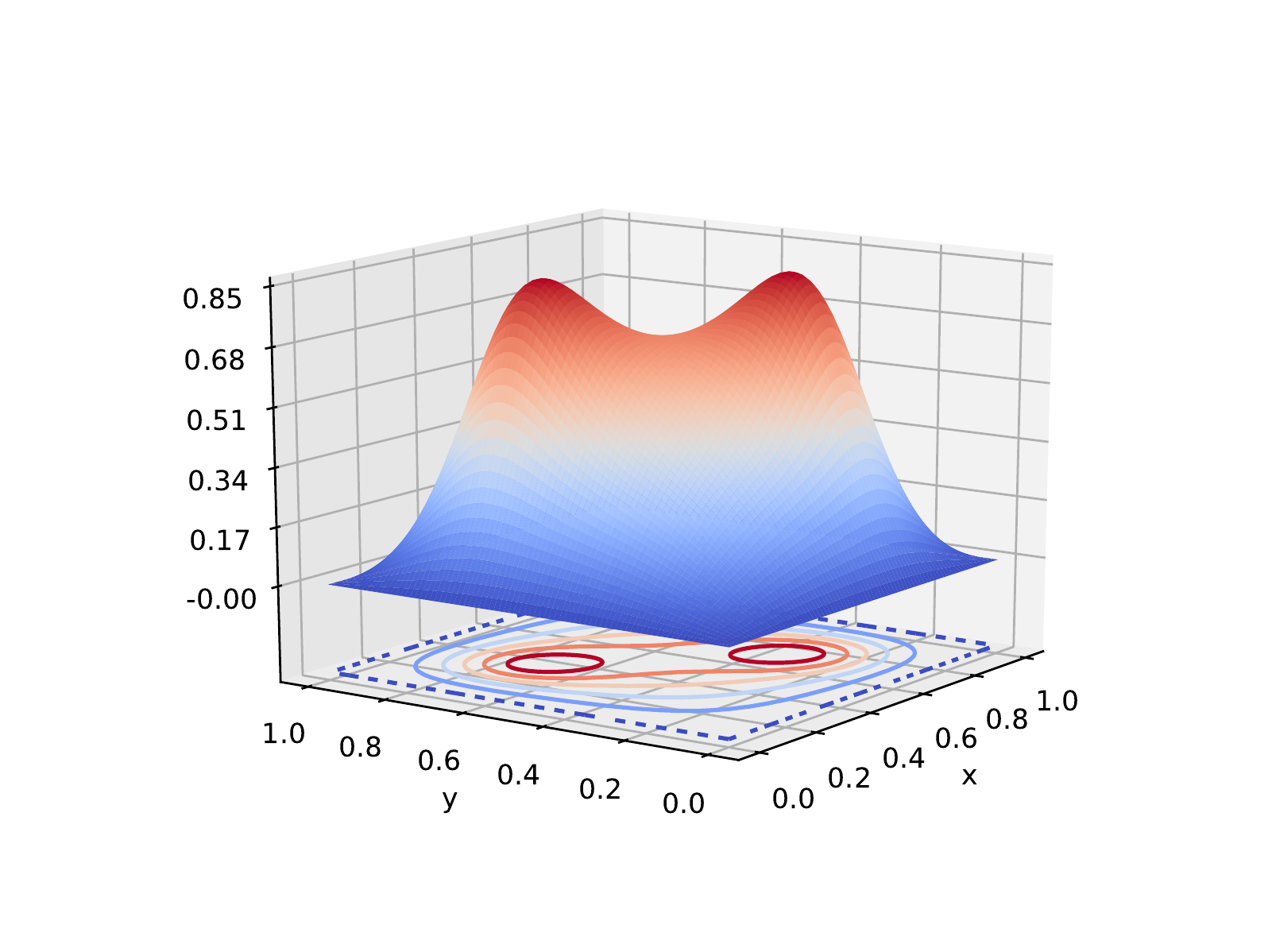}
  \caption*{$t = 0.2$}
\end{subfigure}
\caption{Case 4: Second test case in dimension $2$. Evolution of the density.}
\label{fig:2D-B-shortT-evolp}
\end{figure}

\section{Numerical method for the stationary problem}
\label{sec:numer-meth-stat}
\subsection{Numerical method}

To solve~\eqref{eq:S-FP}--\eqref{eq:S-HJB}, we first notice that we can replace this system by the system composed of~\eqref{eq:S-FP} which we rewrite for convenience
\begin{equation}
  \label{eq:S-FP-equiv} 
  \left\{
    \begin{array}[c]{rcll}
 -\frac{\sigma^2 } 2 \Delta  \widecheck p -\diver\left( \widecheck p    H_\xi\left (\cdot ,  D\widecheck u \right)   \right)   &=&\lambda \widecheck p,    \quad  \quad  \quad & \hbox{ in } \Omega,\\
\widecheck p&=&0    \quad  & \hbox{ on } \partial \Omega,\\
 \widecheck p &\ge& 0   \quad &\hbox{ in } \Omega,\\
\ds \int_\Omega \widecheck p &=&1.&
    \end{array}
\right.
\end{equation}
and the following modified HJB equation:
\begin{equation}
\label{eq:S-HJB-equiv}
 \left\{
    \begin{array}[c]{rcl}
 -\frac{\sigma^2 } 2 \Delta  \widecheck u + H(\cdot, D\widecheck u)
 &=&  \ds  \lambda \widecheck u +F[\widecheck p]    +c_1 \mathds{1}_{\Omega} - \epsilon - \int_\Omega \widecheck u \widecheck p, \hfill \hbox{in } \Omega,\\
\widecheck u&=&0, \hfill \hbox{on }\partial \Omega,
\\
  c_1 &  = & -   \left( \int_\Omega \widecheck p   \left( L\left (\cdot,  H_\xi\left (\cdot,   D\widecheck u  \right)  \right)  +F[\widecheck p]\right) dx\right).
    \end{array}
    \right.
  \end{equation}
  
  Indeed~\eqref{eq:S-FP}--\eqref{eq:S-HJB} is equivalent to~\eqref{eq:S-FP-equiv}--\eqref{eq:S-HJB-equiv}: it is straightforward to see that~\eqref{eq:S-FP}--\eqref{eq:S-HJB} implies~\eqref{eq:S-FP-equiv}--\eqref{eq:S-HJB-equiv}; conversely multiplying the HJB equation~\eqref{eq:S-HJB-equiv} by $\widecheck p$ and the Fokker-Planck equation~\eqref{eq:S-FP-equiv} by $\widecheck u$, integrating on $\Omega$, summing the resulting equations and using the definition of $c_1$, we see that most of the terms cancel and we obtain 
\begin{equation*}
	\int_\Omega \widecheck u(x) \widecheck p(x) dx = -\epsilon
\end{equation*}
and finally~\eqref{eq:S-FP}--\eqref{eq:S-HJB}.

This allows us to use the same kind of iterative algorithm as for the non stationary problem (see Section~\ref{sec:numerics-time}), except that, in order to solve~\eqref{eq:S-FP-equiv}, the linear solver for the FP equation is replaced by an inverse power method.
More precisely, we use the following iterative method, where the solution computed at a given iteration is used to compute the non-local terms involved in the next iteration. 
We will use the notations $\widetilde H$ and $\cF$ introduced in Section~\ref{subsec:finite-hor-num-meth} for the finite time horizon problem. The iterative procedure we consider is:

\begin{enumerate}
	\item Start with a guess $(\widecheck P^{(0)}, \widecheck U^{(0)}, \lambda^{(0)})$; set $k \leftarrow 0$.
	\item Repeat the following steps
		\begin{enumerate}
			\item Compute $\widecheck U^{(k+1)}$, solution to the following modified version of~\eqref{eq:S-HJB-equiv},
			\begin{equation}
			\label{eq:S-HJB-equiv-discrete-kp1}
			\left\{\quad
    			\begin{array}[c]{rcll}
			- \frac{\sigma^2}{2} (\Delta_h \widecheck U)_{i}
				+ \widetilde  H\left(x_i, 1, [\grad_h \widecheck U]_i\right)
				&=& \lambda^{(k)} \widecheck U^{(k)}_i + \cF(\widecheck P^{(k)},\widecheck U^{(k)})_i - \epsilon - h \sum_j  \widecheck U^{(k)}_j \widecheck P^{(k)}_j,
			\\
			&& \hfill \hbox{$i \in \{1,\dots,N_h-1\},$}
			\\
			\widecheck U_{i} &=& 0, \hfill \hbox{$i \in \{0, N_h\}.$}
			\end{array}
    			\right.
 			\end{equation}
			\item Compute $(\widecheck P^{(k+1)}, \lambda^{(k+1)})$, solution to the following modified version of~\eqref{eq:S-FP-equiv},
			\begin{equation}
			\label{eq:S-FP-equiv-discrete-kp1}
			\left\{\quad
    			\begin{array}[c]{rcll}
			- \frac{\sigma^2}{2} (\Delta_h \widecheck P)_{i}
				 - \mathcal B_i^{(1, \widecheck U^{(k+1)})}(\widecheck P)
				&=& \lambda \widecheck P_i, 
			\quad &i \in \{1,\dots,N_h-1\}, 
			\\
			\widecheck P_{i} &=& 0, \quad & i \in \{0, N_h\},
			\\
			\widecheck P_{i} &\geq& 0, \quad & i \in \{1,\dots,N_h-1\},
			\\
			\sum_j h \widecheck P_{j} &=& 1.
			\end{array}
    			\right.
 			\end{equation}
			\item If $||\widecheck P^{(k+1)} - \widecheck P^{(k)} ||_{\ell^2}$ and $||\widecheck U^{(k+1)} - \widecheck U^{(k)} ||_{\ell^2}$ are small enough, stop. 
				Otherwise set $k \leftarrow k+1$ and continue.
		\end{enumerate}
\end{enumerate}
To solve~\eqref{eq:S-HJB-equiv-discrete-kp1}, here again we use \texttt{UMFPACK}.
For the second step, namely finding the eigenvalue and eigenvector solving~\eqref{eq:S-FP-equiv-discrete-kp1}, our implementation relies on the \texttt{ARPACK} routines~\cite{MR1621681}, which are based on the Implicitly Restarted Arnoldi Method.

\subsection{Numerical results}

\subsubsection{Stationary solution}
We consider again the one-dimensional test case discussed in Section~\ref{sec:numres-time}, except that we consider a running cost instead of a final cost.
This example will be referred to as Case 5 in what follows.  More precisely, the cost functional is given by~\eqref{eq:56}, with 
$$
	L(x,b) = \frac{1}{2}|b|^2, \qquad \Phi[p] = \int_\Omega p(x) f(x) dx, \qquad f(x) = -\frac{1}{2} \exp\left( \left(x- 0.7\right)^2 / \left(\tfrac{1}{5}\right)^2 \right)
$$
and $\epsilon = 0$, for $x \in \RR, b \in \RR, p \in L^2(\Omega; \RR)$. 
The solution $(p,u)$ is shown in Figure~\ref{fig:statio-pu}. It is compared with the solution at time $T/2$ of the problem with a time horizon $T$, rescaled by the remaining mass (namely, $u$ and $p$ are respectively multiplied and divided by $\int_\Omega p(T/2)$).
 The numerical convergence of the iterative procedure is illustrated by Figure~\ref{fig:statio-conv-iter}. The stopping criterion is based on the normalized $\ell^2$ distance between two successive approximations of $p$ and $u$, where the normalized $\ell^2$ norm is defined, for a vector $V \in \RR^{N_h+1}$, by
$$
	\| V \|_{\ell^2} = \left( h \sum_{i=0}^{N_h} |V^i|^2 \right)^{1/2}.
$$
For the results presented in Figures~\ref{fig:statio-pu}--\ref{fig:statio-conv-iter}, we have used $h = 10^{-3}$.

\begin{figure}
\centering
\begin{subfigure}{.45\textwidth}
  \centering
  \includegraphics[width=\linewidth]{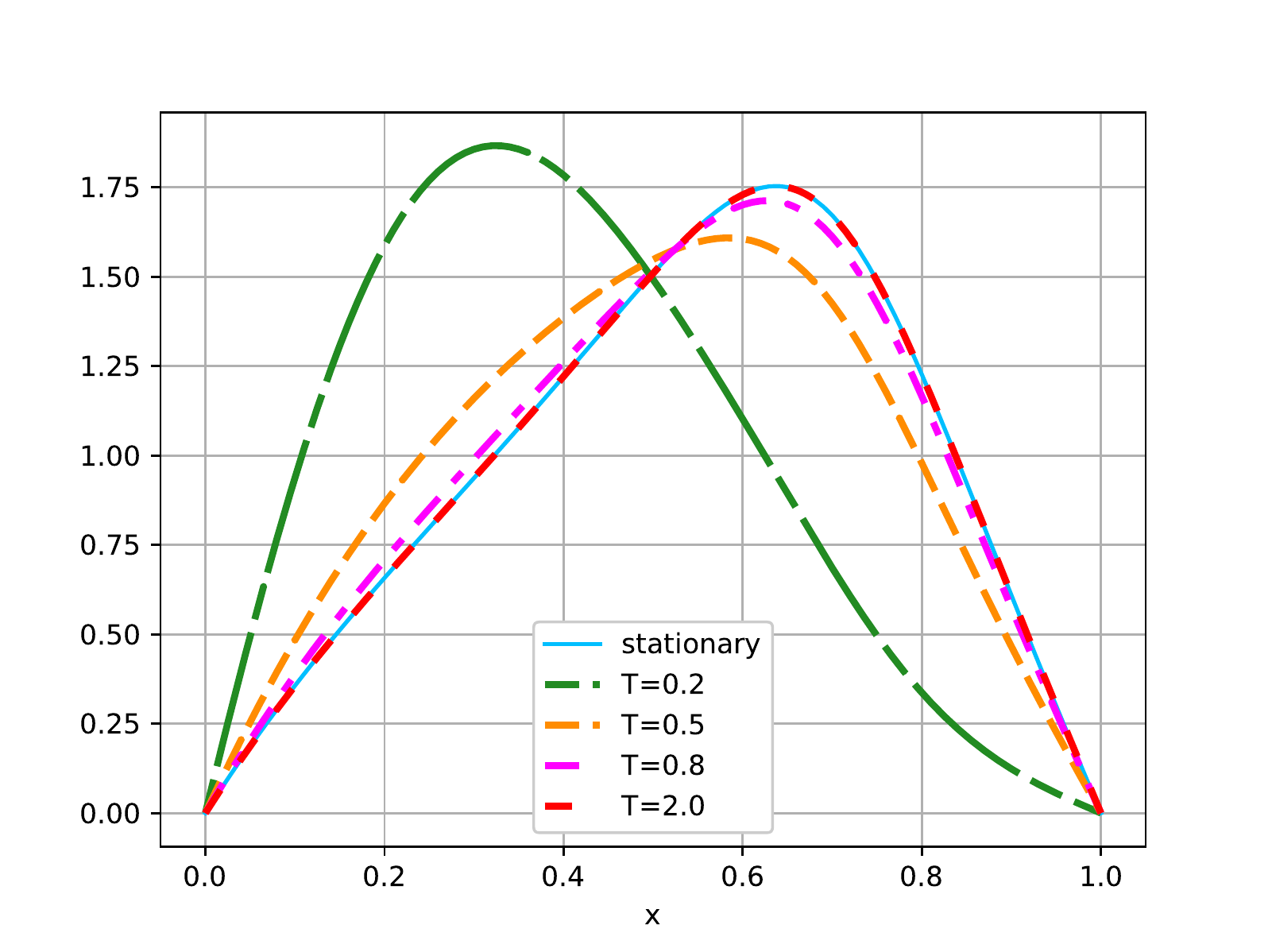}
\end{subfigure}%
\begin{subfigure}{.45\textwidth}
  \centering
  \includegraphics[width=\linewidth]{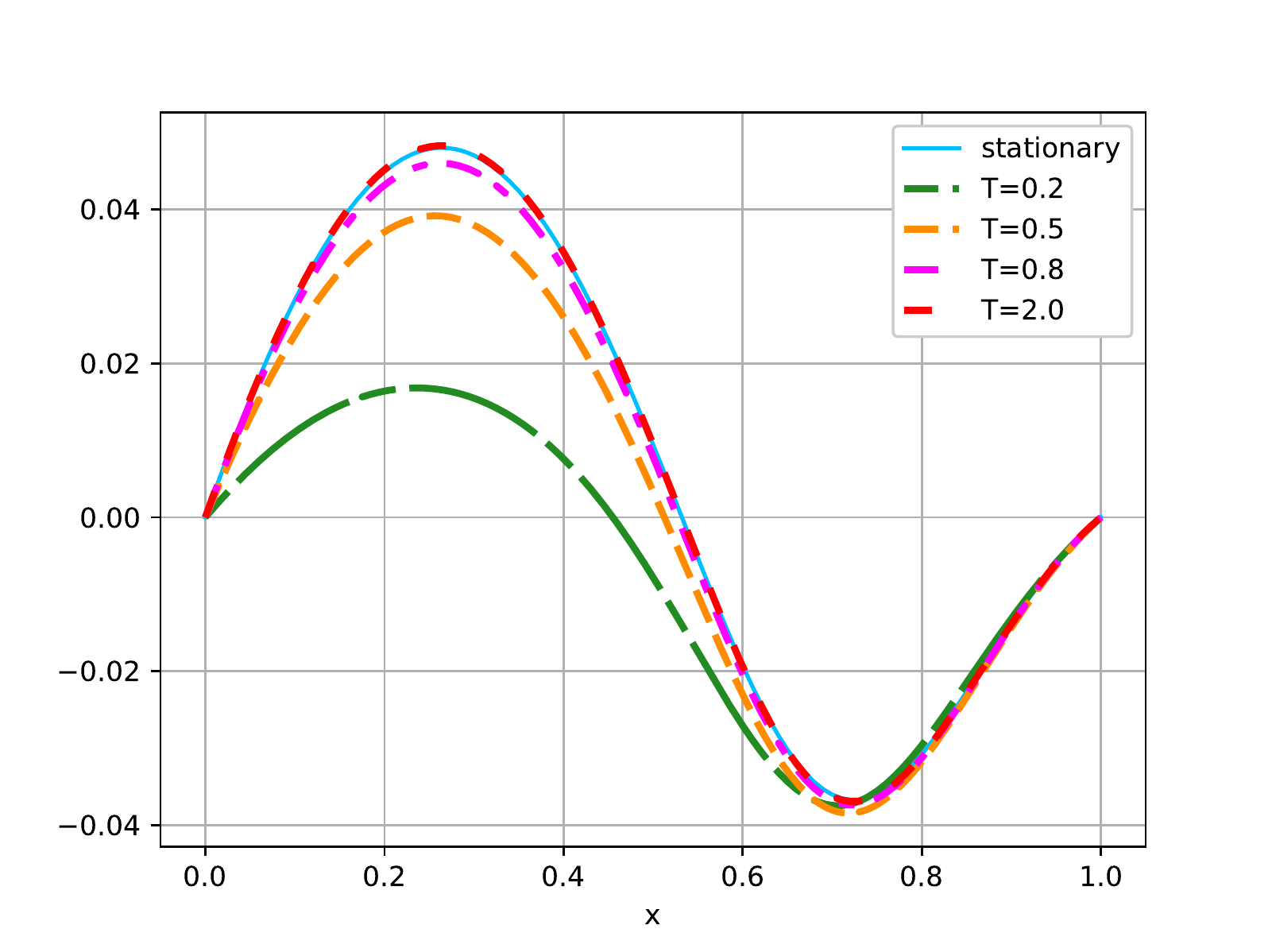}
\end{subfigure}
\caption{Case 5: Solution to the stationary problem: $p$ (left), $u$ (right) and $\lambda \approx 3.15$. }
\label{fig:statio-pu}
\end{figure}

\begin{figure}
\centering
\begin{subfigure}{.45\textwidth}
  \centering
  \includegraphics[width=\linewidth]{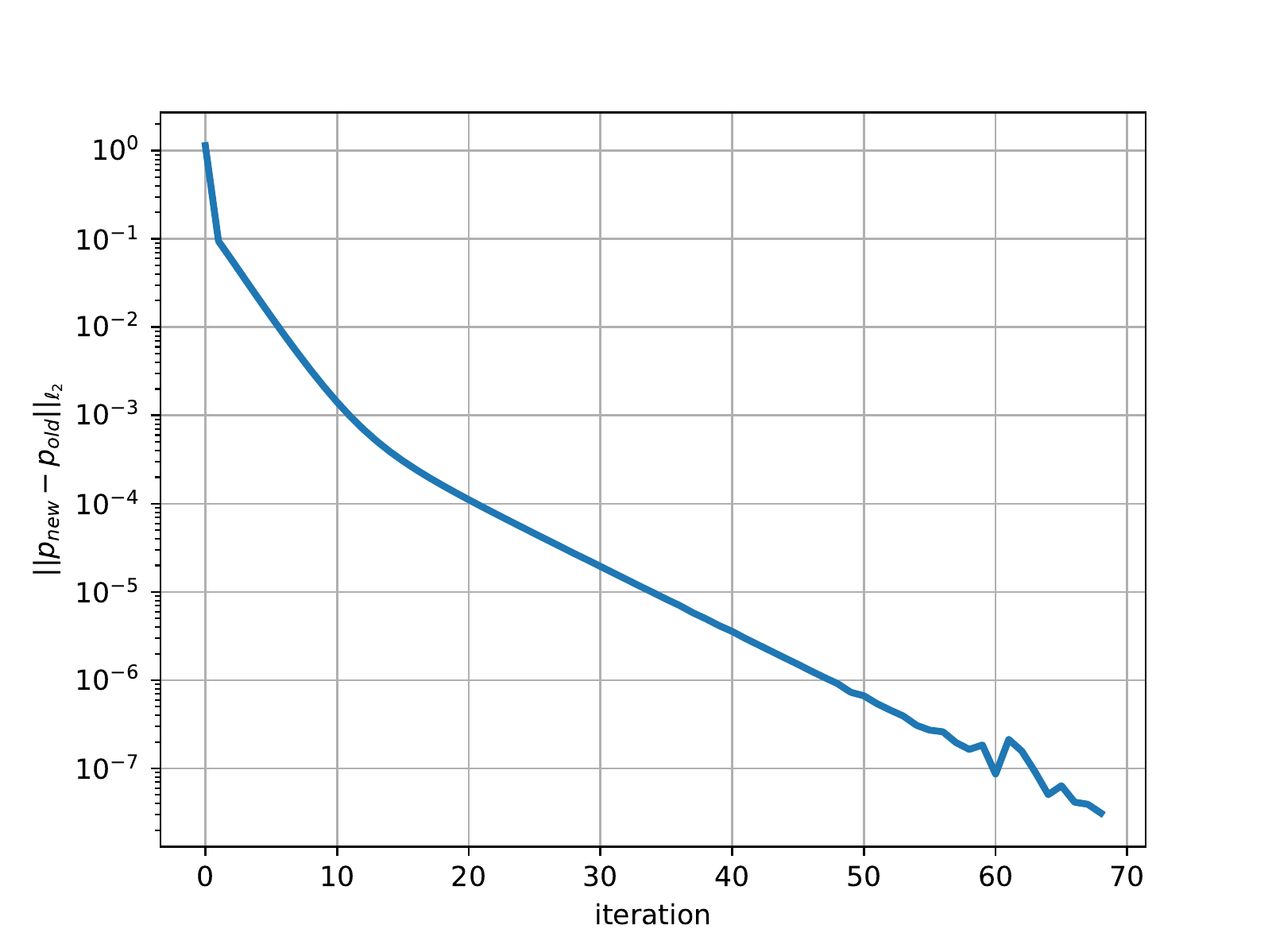}
\end{subfigure}%
\begin{subfigure}{.45\textwidth}
  \centering
  \includegraphics[width=\linewidth]{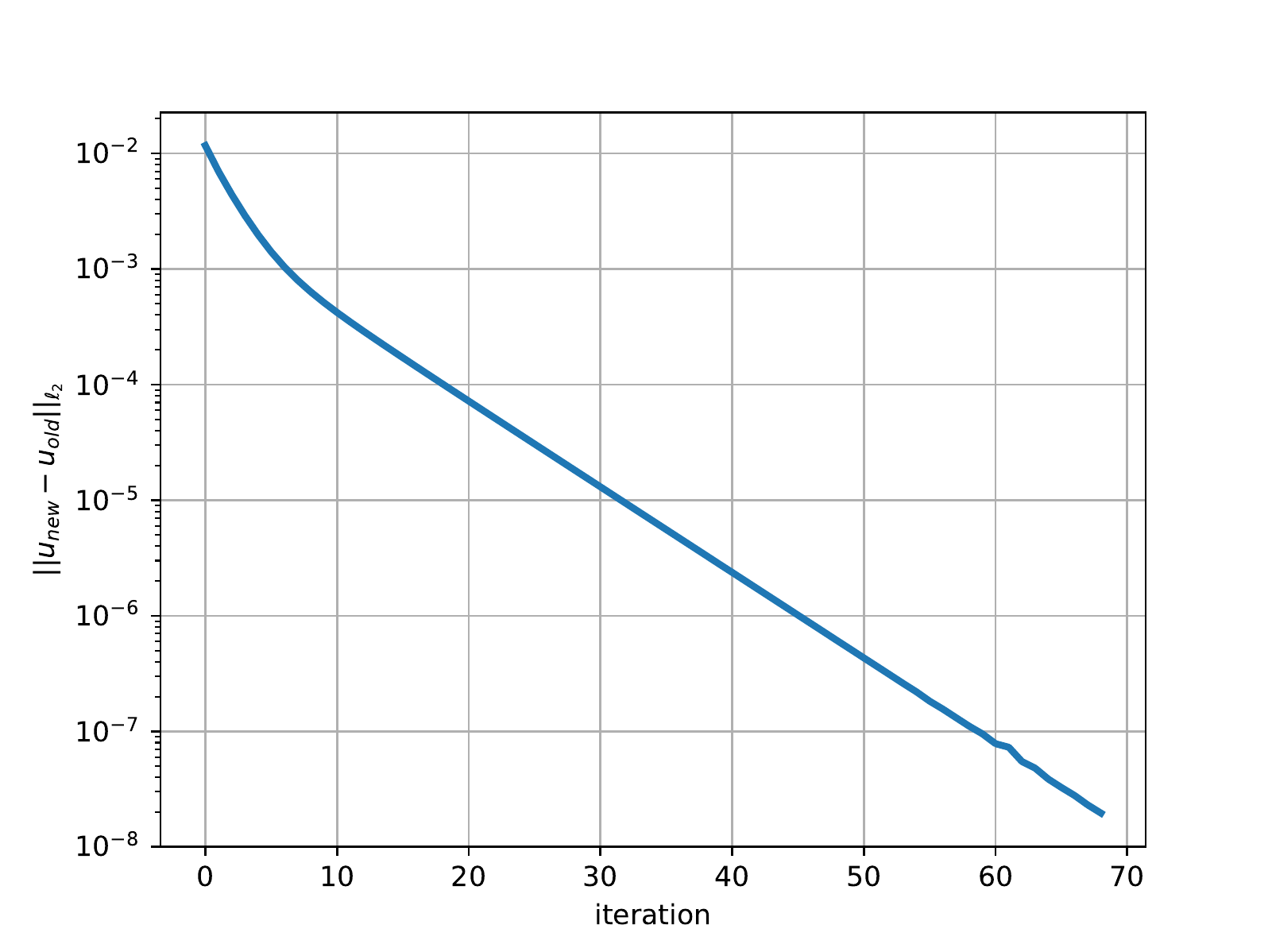}
\end{subfigure}
\\
\begin{subfigure}{.45\textwidth}
  \centering
  \includegraphics[width=\linewidth]{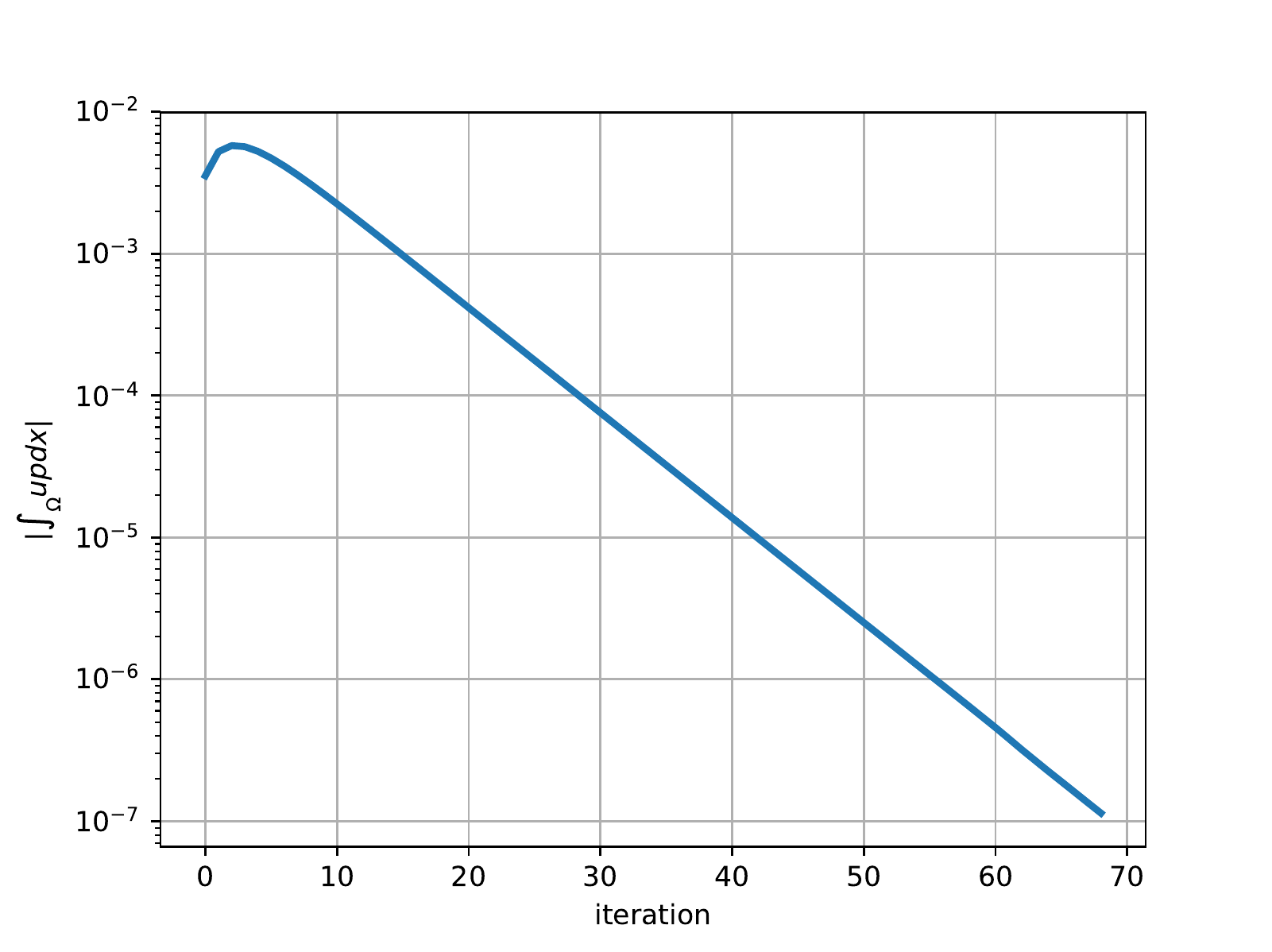}
\end{subfigure}
\caption{Case 5: Numerical convergence: normalized $\ell^2$ distance between two iterations, for $p$ (top left) and $u$ (top right), and evolution of $\int_\Omega p u$ (bottom) with respect to the number of iterations.}
\label{fig:statio-conv-iter}
\end{figure}

\subsubsection{Relation between non-stationary and stationary solutions}
\label{sec:statio-nonstatio-turnpike}

Figure~\ref{fig:ergo-time-turnpike} displays the evolution of (spatial) normalized $\ell^2$ distance between the solution to the stationary problem and the solution to the time-dependent problem, with respect to time. The graphs are shown for several values of the time horizon $T$. One can see that, provided $T$ is large enough, the distance tends to $0$ for $t$ bounded away from $0$ and $T$. This is reminiscent of the so-called turnpike phenomenon: starting from the initial condition, the solution quickly evolves to a stationary regime, which it leaves around the end of the time interval in order to satisfy the final condition.

\begin{figure}
\centering
\begin{subfigure}{.45\textwidth}
  \centering
  \includegraphics[width=\linewidth]{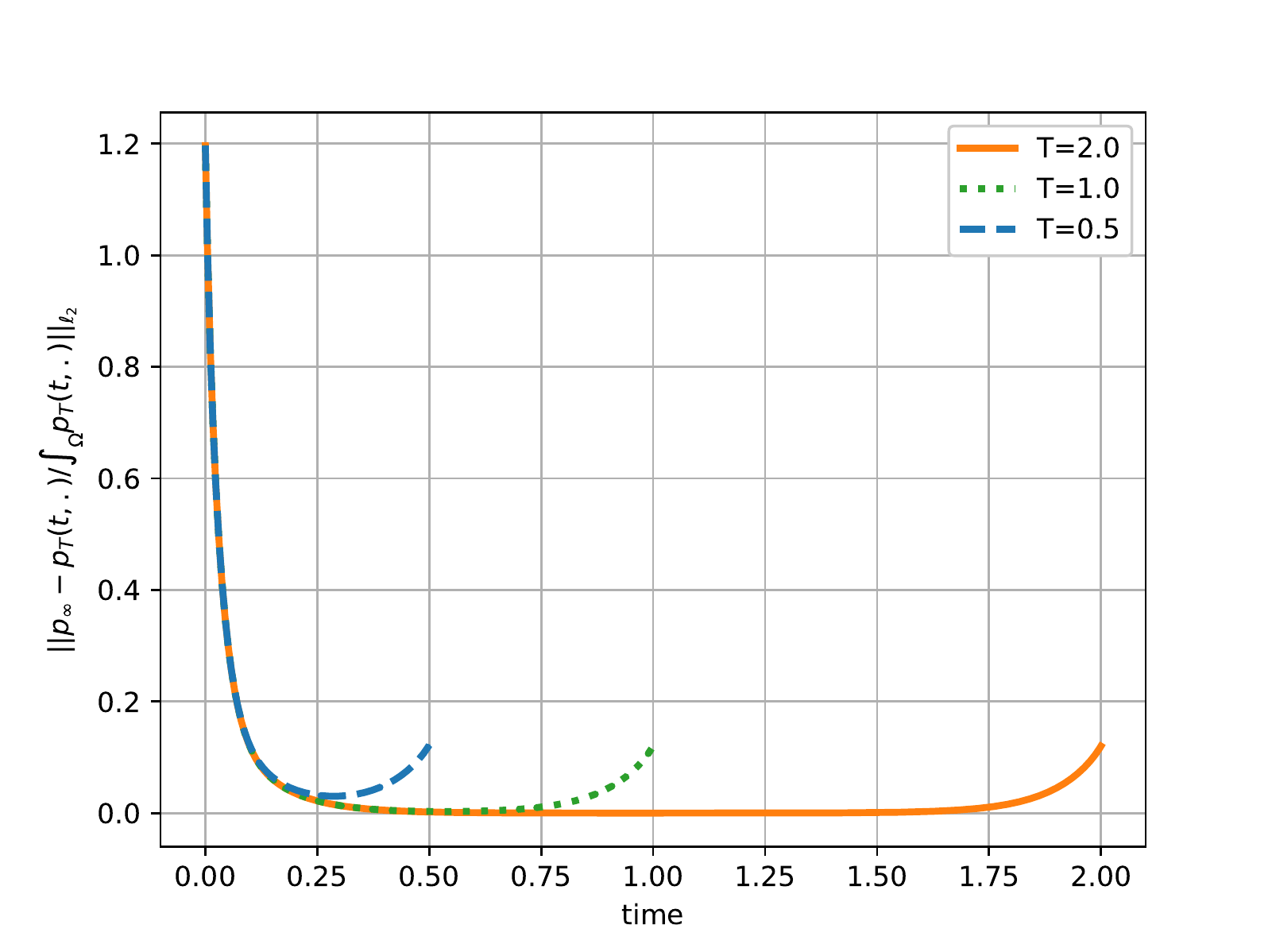}
\end{subfigure}%
\begin{subfigure}{.45\textwidth}
  \centering
  \includegraphics[width=\linewidth]{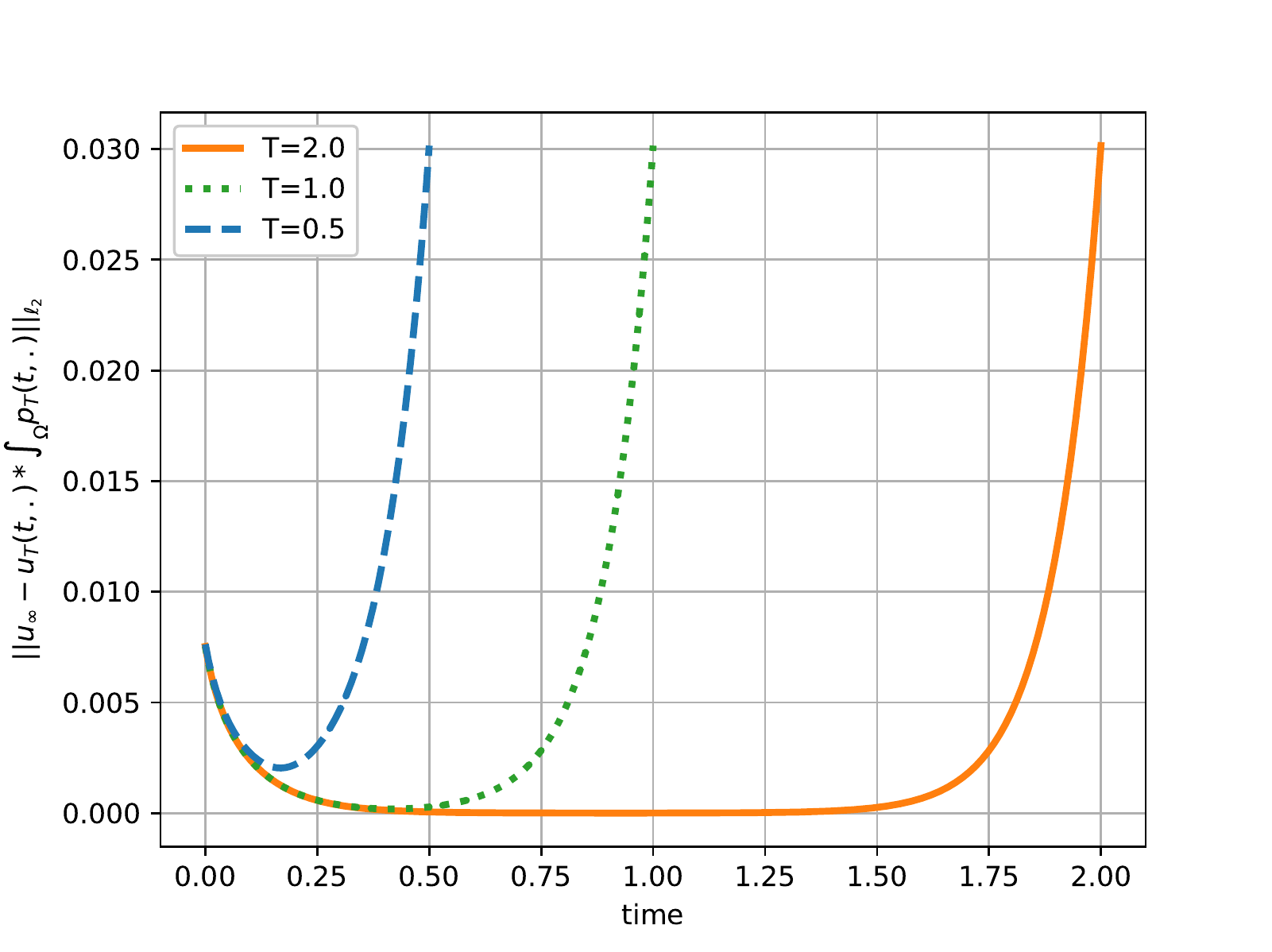}
\end{subfigure}
\caption{Case 5: Comparison between the stationary and non-stationary solutions for $3$ different time horizons ($T=0.5, 1, 2$): normalized $\ell^2$ distance between stationary and non-stationary $p$ (left) and $u$ (right).}
\label{fig:ergo-time-turnpike}
\end{figure}

\subsection{Alternative method for the finite time horizon problem}
\label{sec:alternative-method}

As described above (see Section~\ref{sec:time-longT}), the fact that the total mass tends to $0$ as time increases leads to numerical difficulties for large time horizons. However, the long time behavior suggests that a properly scaled version of the time dependent version should be more amenable to numerical treatment. As evidenced by Figure~\ref{fig:ergo-time-turnpike}, the proper scaling should come from the total remaining mass, but this is unknown until we solve the non-stationary problem. Let us instead rescale the functions using the ergodic constant coming from the non-stationary problem. More precisely, for $\gamma >0$, let $(p^{(T,\gamma)}, u^{(T,\gamma)})$ solve the following scaled system
\begin{equation}
	\label{eq:time-HJB-scaled}
	\left\{ \quad
	\begin{aligned}
	&-\partial_t u(t,x) -\frac {\sigma^2} 2 \Delta u(t,x) +\frac { H\left(x, \left(\int_\Omega  p(t) \right) Du(t,x)\right) }  {\int_\Omega  p(t)}
	=
	 \gamma u(t,x) +
	   \ds \frac {F\left[ \frac {p(t,\cdot)} {\int_\Omega p(t) } \right](x)} {\int_\Omega  p(t)}  +c_1(t)  ,
	 \quad \hbox{ in } Q_T,
	 \\
 	& u = 0, \qquad \hbox{ on } (0,T)\times\partial \Omega,
	\\ 
	& u(T,x)=  \frac 1  {\int_\Omega  p(T)} G\left[ \frac {p(T,\cdot)} {\int_\Omega p(T) } \right](x)  +  c_2(T),
	\qquad \hbox{ in } \Omega ,
    \end{aligned}
	\right.
	\end{equation}
\begin{equation}
\label{eq:time-FP-scaled}
 \left\{ \quad 
 \begin{aligned}
	& \partial_t p(t,x) -\frac {\sigma^2} 2 \Delta p(t,x) - \diver\left( p(t,\cdot) H_\xi \left(\cdot,  \left(\int_\Omega  p(t)\right)   Du(t,\cdot) \right) \right)(x)
	=
	\gamma p(t,x), \quad  \hbox{in } Q_T,
	\\
	& p = 0,\quad\quad \qquad \hbox{ on } (0,T)\times\partial \Omega,
	\\
	& p(0,\cdot) = p_0, \qquad \hbox{ in } \Omega, 
  \end{aligned}
\right.
\end{equation}
and $c_1,c_2: [0,T] \to \RR$ are defined by~(\ref{eq:62}).

In order to solve the original system~\eqref{eq:time-HJB}--\eqref{eq:time-FP}, we propose the following method:
\begin{enumerate}
	\item Solve the non-stationary problem~\eqref{eq:S-FP}--\eqref{eq:S-HJB} and obtain $(\tilde p, \tilde u, \lambda)$.
	\item Solve the scaled system with parameter $\gamma = \lambda$ and obtain $(p^{(T,\lambda)}, u^{(T,\lambda)})$.
	\item Un-scale the solution by letting $u^{(T)}(t,x) = e^{\lambda t} u^{(T,\lambda)}(t,x)$ and $p^{(T)}(t,x) = e^{-\lambda t} p^{(T,\lambda)}(t,x)$.
\end{enumerate}
One can check that $(p^{(T)}, u^{(T)})$ indeed solves~\eqref{eq:time-HJB}--\eqref{eq:time-FP}.

\bigskip

\noindent {\bf Acknowledgment.}
  The research of the first author was partially supported by the ANR (Agence Nationale de la Recherche) through
MFG project ANR-16-CE40-0015-01.

\bibliographystyle{plain}
\bibliography{controlcond-bib}

\end{document}